\newtheorem{Thm}{Theorem}
\newtheorem{Prop}{Proposition}
\def\R{\mathbb R}
\newcommand{\p}{\partial}
\title[A model of liquid-vapor interaction
    with metastability]{A nonisothermal thermodynamical model of
      liquid-vapor interaction with metastability}
\author{Hala Ghazi}
\address[Hala Ghazi]{Laboratoire Jean Leray, Universit\'e de Nantes \&
  CNRS UMR 6629, 
BP 92208, F-44322 Nantes Cedex 3, France}
\email{hala.ghazi@univ-nantes.fr}
\author{
Fran\c{c}ois James}
\address[Fran\c{c}ois James]{Institut Denis Poisson,
Universit\'e d'Orl\'eans \& CNRS UMR 7013, BP 6759, F-45067 Orl\'eans
Cedex 2, France}
\email{francois.james@univ-orleans.fr}
\author{H\'el\`ene Mathis}
\address[H\'el\`ene Mathis]{Laboratoire Jean Leray, Universit\'e de
  Nantes \& CNRS UMR 6629, 
BP 92208, F-44322 Nantes Cedex 3, France}
\email{helene.mathis@univ-nantes.fr}
\begin{document}

\maketitle

\begin{abstract}
The paper concerns the construction of a compressible liquid-vapor relaxation model which is
able to capture the
metastable states of the non isothermal van der Waals model as well as
saturation states. 
Starting from the Gibbs formalism, we propose a dynamical system which
complies with the second law of thermodynamics. Numerical simulations
illustrate the expected behaviour of metastable states: an initial
metastable condition submitted to a certain perturbation
may stay in the metastable state or reaches a saturation state.
The dynamical system is then coupled to the dynamics of the
compressible fluid using an Euler set of equations supplemented
by convection equations on the fractions of volume, mass and energy of
one of the phases. 
\end{abstract}

\noindent
\textbf{Key-words.} Thermodynamics of phase transition, metastable
states, van der Waals EoS, dynamical
systems, homogeneous relaxation model, numerical simulations.\\

\noindent
\textbf{MSC. 2010} 80A10, 80A15, 37N10.

\tableofcontents

\section{Introduction}

Metastable two-phase flows are involved in many industrial
applications, for instance in scenarii of safety accidents in
pressurized water reactors. They can also appear in everyday life. 
Warming water in a microwave with the maximum power may make the
liquid water being metastable: its temperature increases
above the saturation temperature; the water is the called
superheated.
The metastability corresponds then to a delay in vaporization.
Even a small perturbation of
the metastable water may lead to the brutal appearance of a vaporization wave.
In \cite{bartak90} an analogous phenomenon is highlighted. Liquid
water can be brought to a superheated state by means of a very rapid
depressurization. The depressurization is stopped suddenly by an
explosive nucleation causing, in its turn, an increase of the pressure.

As pointed out in \cite{DeLorenzo18}, such compressible two-phase
flows are characterized by three main difficulties. The first two
difficulties are linked to the dynamics of the fluid, namely the
compressibility of both phases and the presence of the moving
interface between them. The third difficulty lies in the modelling of
the thermodynamical exchanges which occur at the interface. 
The references \cite{saurel08} and \cite{Zein10} focus on the two
first difficulties and propose models coming from the Bear-Nunziato
model for compressible two-phase flows. The models are either 6 or 5
equations models, possibly including pressure and velocity interfacial
terms. Each phase possesses his own convex Equation of State (EoS),
namely a stiffened gas law (or a Mie-Gr\"uneisen generalization). 
Relaxation towards thermodynamical equilibrium is assumed to be
infinitely fast, so that metastable states appear far from the
vaporization fronts. In \cite{DeLorenzo18, DeLorenzo17} and
\cite{DeLorenzo2019}, the authors improve this approach by using the realistic tabulated law
IAPWS-IF97 EoS coupled with cubic interpolation and accurate HLLC-type
numerical scheme. 
They compare different models of a same hierarchy.
Starting from a
single-velocity six equations model with full disequilibrium, they
consider an
homogeneous equilibrium model where the liquid and the vapor are at
thermodynamical equilibrium (meaning stable) and a homogeneous
relaxation model in which the liquid is assumed to be metastable and
the vapor is at
saturation. Again emphasis is given to the two first difficulties of
compressible two-phase flows, the question of metastability being
addressed solely in the choice of the complex EoS. 

In the present paper, we focus on the third difficulty, namely
the modelling of thermodynamical
transfers and the appearance of metastable states.
As the dynamics of the flow is concerned, we adopt the strategy
proposed in \cite{DeLorenzo18, DeLorenzo17} and consider the
homogeneous relaxation model given in \cite{HelluyHurisse15} and
\cite{Hurisse17}. 
We assume that the two phases evolve with the same velocity and
consider the mass, momentum and energy conservation equation of the flow. 
The specificity is to assume that the two phases follow the same
non-convex EoS, namely a reduced form of the van der Waals equation. 
Because  the model involves a mixture pressure based on this cubic
equation,
the convective system is not strictly
hyperbolic, notably in the van der Waals spinodal zone. 
To get rid of this problem, the pressure is relaxed and depends on
additional quantities, which are the fractions of volume, mass and energy of
one of the phases. These fractions obey to convective equations with
relaxation terms
towards the thermodynamic equilibrium. The core of the paper is the
proper definition of these relaxation terms. 
To do so, we extend the method we proposed in \cite{james} in the
isothermal case and provide a characterization of thermodynamic
equilibria which are either saturation states, stable and metastable states.

In a first section, we recall some basic facts of thermodynamics in the
extensive and intensive form \cite{callen85}, notably the notion of
entropy. We focus on the van der Waals model, which is well-known
to depict stable and metastable states but is calssically used with a convexification
correction to properly depict saturation. It turns out that the
representation of metastable states of the van der Waals model is done
in the volume-pressure plane, although the equations of motion require
to manipulate phase diagram and EoS defined in the volume-energy plane.
A large part of Section \ref{sec:vdw} then concerns
the representation of stable, metastable and spinodal zone in the
volume-energy plane. 

In Section \ref{sec:optim} we investigate the thermodynamic stability of a
system described by the non convex EoS of van der Waals in its reduced
form.
As suggested in \cite[chap. 8]{callen85}, introducing heterogeneity
in a system is the hallmark of phase transition. Hence, in order to introduce
heterogeneity in the system, we decompose it in an arbitrary number
of subsystems depicted by the same nonconvex EoS. The second principle
of thermodynamics leads to a
constrained maximization problem on the mixture entropy. 
It turns out that the number of subsystems is limited to two, in accordance 
with the Gibbs phase rule.
Then the study of the optimization problem leads to two
possible kinds of maximizers, 
either saturation states or states corresponding to the identification of the two phases.
In the latter case, there is no distinction between the two phases and
all the states belonging to the van der Waals EoS
are possible maximizers, including the non-admissible (physically unstable) states of the
spinodal zone.
On the other hand, the saturation states correspond to the coexistence of the two phases at saturation, with
equality of the pressures, temperatures and chemical potentials of the two phases, corresponding to the convexification of the EoS.

Section \ref{sec:syst_dyn} provides a dynamical description of the
thermodynamic equilibrium and of its two kind of equilibrium states.
Following the approach developed in \cite{james} and \cite{ghazi19}
in the isothermal case, we introduce a dynamical system whose long-time
equilibria coincide with the maxima of the above optimization
problem, under a mixture entropy growth criterion.
We focus in this paper on a dynamical system on the fractions of
volume, mass and energy of the phase 1. The system is designed to recover the above two possible
equilibria: either saturation states or states corresponding to the
identification of the two phases. In the latter case, the equilibrium
is characterized by the equality of all the fractions which converge
asymptotically to some value belonging to $]0,1[$. Hence, as the two
phases identify, the fractions are not equal to $0$ or $1$, in contrast with
the Baer-Nunziato type two-phase models 
\cite{BN86}. This is one fundamental feature of the dynamical model we propose.
Another property stands in the attractivity of the equilibria and
their attraction basins. If the
energy-volume state of the mixture belongs to the spinodal zone, then
the corresponding equilibrium is a saturation state, whatever the
initial conditions of the dynamical system are. Thus the dynamical system
gets rid of unstable states of the spinodal zone by construction. 
On the other hand, if the mixture state belongs to a metastable zone,
there are two possible equilibria depending on the perturbation: 
either the identification of the
two phases to the mixture metastable state or a saturation state.
This interesting property was already highlighted in \cite{james,ghazi19} and is extended here to the non-isothermal case.
Numerical simulations illustrate the attraction of each equilibria and
typical trajectories of the dynamical system in the volume-energy
plane, volume-pressure plane and in the fractions domain.

Finally Section \ref{sec:homo-model} addresses the coupling
between the thermodynamics and the compressible dynamics of the
two-phase flows we are interested in. Following the approach in
\cite{HelluyHurisse15,Hurisse17,DeLorenzo18}, we consider that the fluid is homogeneous in the
sense that the two phases evolve with the same velocity. Then the
model is based on the 
conservation equations of total mass, momentum and energy. To close
the system, it is endowed with a complex equation of state
depending on the fractions of volume, mass and energy of one of the
phases. To ensure the return to the thermodynamic equilibrium, the
 evolution equations of the fractions admit relaxation source terms
derived from the dynamical system studied in Section \ref{sec:syst_dyn}.
Because the mixture pressure involves the van der Waals EoS, the
hyperbolicity is non strict. However it has been proved in
\cite{james} that the domains of hyperbolicity of the complete model
strongly depend on the attraction basins of the dynamical system.
In order to illustrate the dynamical behaviour of the model, we
provide a numerical scheme based on a fraction step approach: the
convective part is approximated by an explicit HLLC solver while the
source terms is integrated by a RK4 method.

\section{Thermodynamic assumptions and the van der Waals EoS}
\label{sec:Thermo}

\subsection{Description of a single fluid}

We consider a monocomponent fluid of mass $M \geq 0$, occupying a volume $V \geq 0$
with internal energy $E \geq 0$. Following the Gibbs formalism
\cite{Gibbs, callen85}, we introduce the extensive entropy $S$ of
the fluid as a function of its mass $M$,  volume $V$ and energy $E$:
\begin{equation}
  \label{eq:entropy}
  S:(M,V,E)\mapsto S(M,V,E).
\end{equation}
All the above quantities are said extensive, in the sense that
if the system is doubled,  then its mass, volume,
energy and entropy are doubled as well. 
Any extensive quantity is said
positively homogeneous of degree 1 (PH1) and satisfies
\begin{equation}
  \label{eq:PH1}
  \forall \lambda >0, \quad S(\lambda M, \lambda V, \lambda E)= \lambda
  S(M,V,E).
\end{equation}
We assume that the entropy
function $S$ belongs to $C^2(\R^+ \times \R^+ \times \R^+)$.
It allows to introduce intensive quantities, that are positively
 homogeneous functions of degree 0 (PH0), corresponding to
 derivatives of extensive functions.
From the gradient vector $\nabla S$ of the entropy $ S$, we
commonly define the pression $p$, the temperature $T$ and the chemical
potential $\mu$ by
\begin{equation}
  \label{eq:Intensive-quantities}
    \dfrac{1}{T}= \dfrac{\p S}{\p E} (M,V,E) , \quad
    \dfrac{p}{T}=  \dfrac{\p S}{\p V} (M,V,E),  \quad  
    \dfrac{\mu}{T}=-\dfrac{\p S}{\p M} (M,V,E),
\end{equation} 
leading to the fundamental thermodynamics extensive Gibbs relation
\begin{equation}
\label{eq:relation-thermo}
dS=-\dfrac{\mu}{T} d M+ \dfrac{p}{T} dV+  \dfrac{1}{T} d E.
\end{equation}
Standard thermodynamics requires that
\begin{equation}
  \label{eq:positivity-of-T}
  T =\left( \dfrac{\p S}{\p E}\right)^{-1}>0.
\end{equation}
Since the entropy $S$ is a PH1 function, it verifies the Euler
relation
\begin{equation}
  \label{eq:Euler-relation}
  S(M,V,E)=\nabla S(M,V,E) \cdot
  \begin{pmatrix}
    M\\
    V\\
    E
  \end{pmatrix},
\end{equation}
which, combined with the definitions \eqref{eq:Intensive-quantities},
gives
\begin{equation}
  \label{eq:Gibbs-relation}
  S(M,V,E)=   -\dfrac{ \mu M}{T}  +   \dfrac{pV}{T} +  \dfrac{E}{T}.
\end{equation}
Introducing the specific volume $\tau=V/M$ and the specific internal
energy $e=E/M$, and using the homogeneity of the extensive entropy
function, one can define the specific entropy $s$
\begin{equation}
\label{eq:intensive-entropy}
s(\tau,e)=S\left(1,  \dfrac{V}{M},  \dfrac{E}{M}\right)
=\dfrac{1}{M} S(M,V,E).
\end{equation}
We keep the same notations to denote the pressure and the temperature
expressed as functions of the specific volume and energy
\begin{equation}
  \label{eq:intensive-form}
  \dfrac{1}{T}= \dfrac{\p s}{\p e} (\tau,e), 
  \qquad
  \dfrac{p}{T} = \dfrac{\p s}{\p\tau} (\tau,e).
\end{equation}
The fundamental thermodynamics relation in its intensive form reads as
follow
\begin{equation}
  \label{eq:int_total_differential}
  Tds=d e+p d\tau.
\end{equation}
and the intensive counterpart of relation
\eqref{eq:Gibbs-relation} is
\begin{equation}
  \label{eq:gibbs-intensive}
  Ts=-\mu + p\tau +e.
\end{equation}

\subsection{The van der Waals Equation of State}
\label{sec:vdw}

In this work we focus  on a  non necessarily concave nor convex entropy function $s$.
 A common exemple is the van der Waals Equation of State (EoS), which
 entropy reads
\begin{equation}
 \label{eq:vdw-entropy}
  s(\tau,e)=C_v \ln \left(\dfrac{a}{\tau}+e \right)+R \ln(\tau-b)+s_0,
\end{equation} 
where $R$ is the universal constant of gas, $C_v>0$ the calorific
constant at constant volume, $s_0$  is the
entropy of reference, and $a$ and $b$ are the two nonnegative parameters
\cite{callen85,landau}. 

The entropy is well defined for $(\tau,e)\in (\mathbb R^+)^2$ such that
\begin{equation}
  \label{eq:dom_tau_e}
  \tau>b, \qquad \dfrac{a}{\tau}+e >0. 
\end{equation}
The corresponding definition domain of $s$ is denoted $D_s$:
\begin{equation}
  \label{eq:D_s}
  D_s:=\left\{(\tau,e) \in (\R^+)^2;\,   \tau>b \text{ and }
  \dfrac{a}{\tau}+e>0 \right\}. 
\end{equation}
According to relations \eqref{eq:intensive-form}, the van der Waals
temperature and pressure read
\begin{eqnarray}
  \label{eq:VDW_temperature}
  T(\tau,e)&=&\dfrac{1}{C_v} \left(e+ \dfrac{a}{\tau}\right),\\
  \label{eq:VDW_pressure}
  p(\tau,e) &=& \dfrac{R }{C_v(\tau -b)}\left(e+
    \dfrac{a}{\tau}\right)-\dfrac{a}{\tau^2}=
  \dfrac{RT(\tau,e)}{\tau-b}-\dfrac{a}{\tau^2}.
\end{eqnarray}
The van der Waals entropy is neither concave nor convex. Indeed the
coefficients of its Hessian matrix $H_s(\tau,e)$ are given by
    \begin{equation}\label{eq:hessian_s}
 \begin{cases}
\dfrac{\p^2 s}{\p\tau^2}(\tau,e)= \dfrac{1}{T(\tau,e)}\left(\dfrac{2a}{\tau^3} -
    \dfrac{a}{\tau^2}\dfrac{R}{C_v(\tau-b)}-\dfrac{RT(\tau,e)}{(\tau-b)^2}\right) +
    \dfrac{a}{C_v}\dfrac{p(\tau,e)}{\tau^2 T^2(\tau,e)},\\   
\dfrac{\p^2 s}{\p e^2}(\tau,e) = {}-\dfrac{C_v}{T^2(\tau,e)},\\
\dfrac{\p^2 s}{\p\tau \p e}(\tau,e) = \dfrac{C_v}{T^2(\tau,e)}\dfrac{a}{\tau^2}.
\end{cases}
    \end{equation}
Since the temperature $T$ is positive on $D_s$, one has
    \begin{equation}\label{eq:non_conv_property}
\dfrac{\p^2 s}{\p e^2} <0.
    \end{equation}
However the entropy function $s$ is not globally concave and its domain of concavity restricts to the set
where the determinant of $H_s$ is positive, that is
    \begin{equation}\label{eq:concav_domain}
D_c:=\left\{(\tau,e)\in 	D_s;\,   \dfrac{\p^2 s}{\p\tau^2}\dfrac{\p^2 s}{\p e^2}-\left(\dfrac{\p^2 s}{\p e\p\tau}\right)^2 > 0\right\}.
    \end{equation}

The non-concavity property of the van der Waals entropy makes it an
appropriate toy-model to represent liquid-vapor phase transition
\cite[chap.9]{callen85}. 
States belonging to the concavity region of the entropy refer to
stable and metastable liquid and vapor states. In contrast states
belonging to the non-concavity region are non-admissible states.
The purpose of this section is to precise the geometrical loci of
these states and provide
representations of the phase diagrams of the van der Waals EoS 
in both the $(\tau,p)$ and the $(\tau,e)$ planes.

In all the representations given in the sequel,
we use a reduced form of the EoS, as the one proposed
in \cite{Fechter_Munz17}, with the parameters 
\begin{equation}
  \label{eq:vdw_param}
a= 1, \quad 
b=0.5, \quad
R=0.5, \quad 
C_v= 3, \quad 
s_0=0.
\end{equation}

Usually the metastable zones of the van der Waals EoS are defined and
observable in the $(\tau,p)$ plane at constant temperature.
This implies to manipulate the entropy and the pressure as
functions of the volume $\tau$ and the temperature $T$.
Adapting relations \eqref{eq:VDW_temperature}, \eqref{eq:VDW_pressure}
and \eqref{eq:vdw-entropy} leads to
\begin{equation}
  \label{eq:pT_sT}
  \begin{aligned}
    s(\tau,T) &= C_v \ln ( C_vT) + R \ln(\tau-b)+s_0,\\
    p(\tau,T) &= \dfrac{RT}{\tau-b}-\dfrac{a}{\tau^2}.
  \end{aligned}
\end{equation}

\begin{figure}[h]
  \centering
  \includegraphics[width=\linewidth]{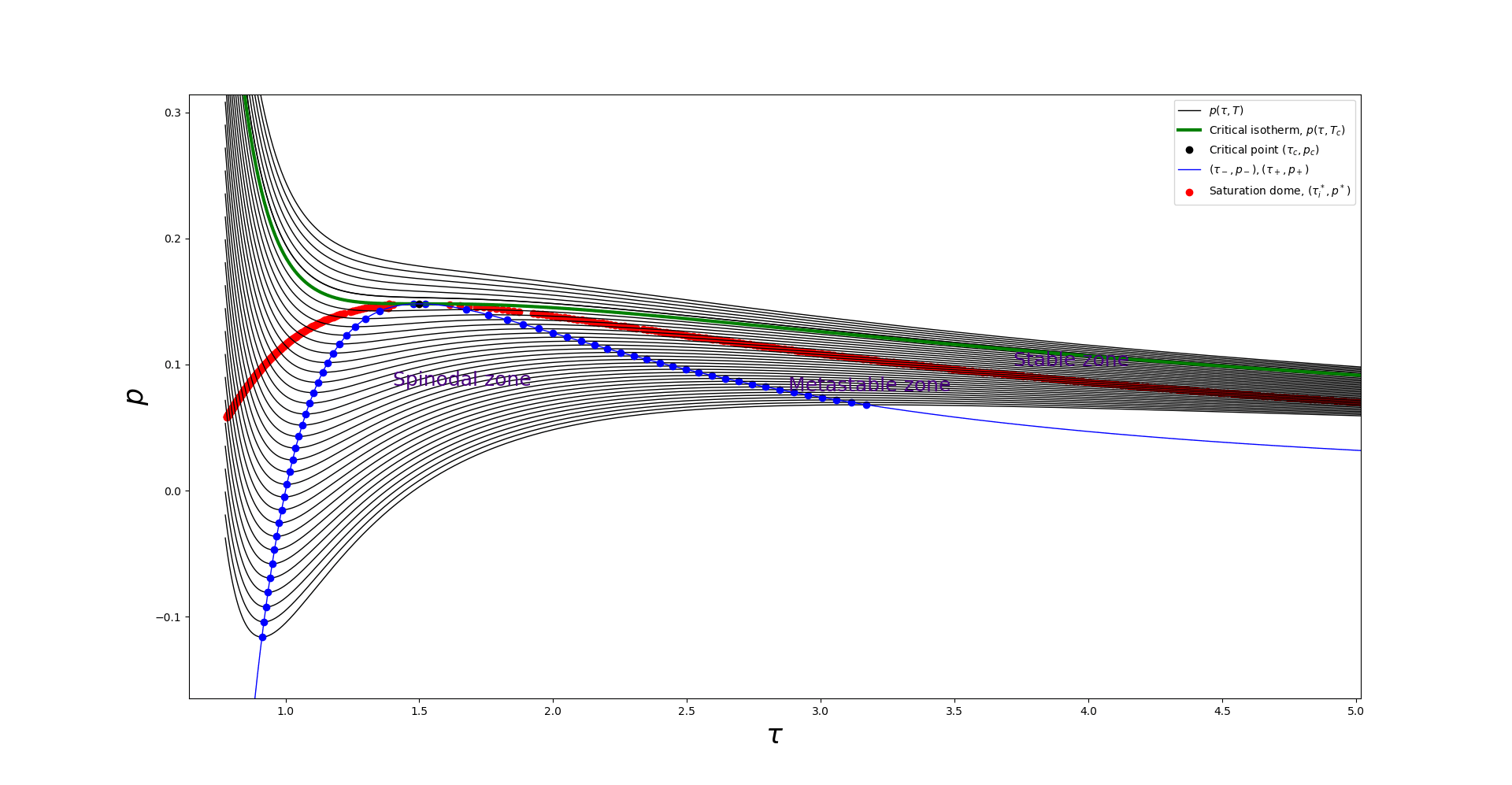}
  \caption{Isothermal curves of the van der Waals EoS in the
    $(\tau,p)$ plane. Isothermal
    curves $p(\tau,T)$ are plotted in black. The isothermal
    curve at critical temperature $T=T_c$ is plotted in green. Below the
    critical isothermal curve, the pressure is not monotone with
    respect to the specific volume and
    increases in the spinodal zone of non admissible states. 
    This zone is delimited by the blue
    curve representing the set of minima $(\tau_-,p(\tau_-,T))$ and maxima
    $(\tau_+,p(\tau_+,T))$ of the pressure for each temperature $T<T_c$.
    The Maxwell equal area rule construction allows to replace
    the non physically admissible increasing branch of an isothermal
    curve by computing two volumes $\tau_1^*$ and $\tau_2^*$ at each
    temperature $T<T_c$, such that $p(\tau_1^*,T)=p(\tau_2^*,T)$. The set
    of these volumes is represented in red in the graph and
    corresponds to the saturation dome. 
    The states belonging to decreasing branches of isothermal curves,
    below the saturation dome (in red) and above the spinodal zone (in
    blue), are called metastable states.}
  \label{fig:vdw_tau_p}
\end{figure}

We represent in Figure~\ref{fig:vdw_tau_p} the isothermal curves $(\tau,p(\tau,T))$ (black
lines) in the $(\tau,p)$ plane for fixed temperatures $T$. 
There exists a unique critical temperature $T_c$ for which the pressure admits a unique
inflection point $(\tau_c,p(\tau_c,T_c))$, called the critical point. For the reduced van der Waals law, $T_c=1$.
For supercritical temperature $T>T_c$, 
the pressure is a strictly decreasing function of the specific volume. 
Below the critical isothermal curve, 
for $T<T_c$, the pressure is an increasing
function of the volume between the minimum $(\tau_-,p(\tau_-,T))$ and
the maximum $(\tau_+,p(\tau_+,T))$. This increasing branch refers
to non physically admissible states.
The critical isothermal curve is plotted in green in
Figure~\ref{fig:vdw_tau_p}.
The set of minima and maxima is plotted in blue in
Figure~\ref{fig:vdw_tau_p} 
and delimits the spinodal zone.
Actually the spinodal zone in the $(\tau,p)$ plane  corresponds to the
zone $D_s \setminus D_c$ in the plane $(\tau,e)$ where the entropy
function is not concave.

At a given temperature $T<T_c$, it is classical to replace the non
admissible increasing branch of the pressure by a specific isobaric line
satisfying the Maxwell equal area rule. Such a construction
defines two volumes $\tau_1^*$ and $\tau_2^*$, for each temperature
$T<T_c$, such that  $p(\tau_1^*, T) = p(\tau_2^*,T)$.
Their set, represented in red in Figure~\ref{fig:vdw_tau_p},
is called the saturation dome.
The states belonging to decreasing branches of isothermal curves,
below the saturation dome (in red) and above the spinodal zone (in
blue) are called metastable states.

The purpose of this section is to provide a representation of the
saturation dome, spinodal and metastable zones in the
$(\tau,e)$ plane.
\begin{figure}[h]
  \centering
  \includegraphics[width=\linewidth]{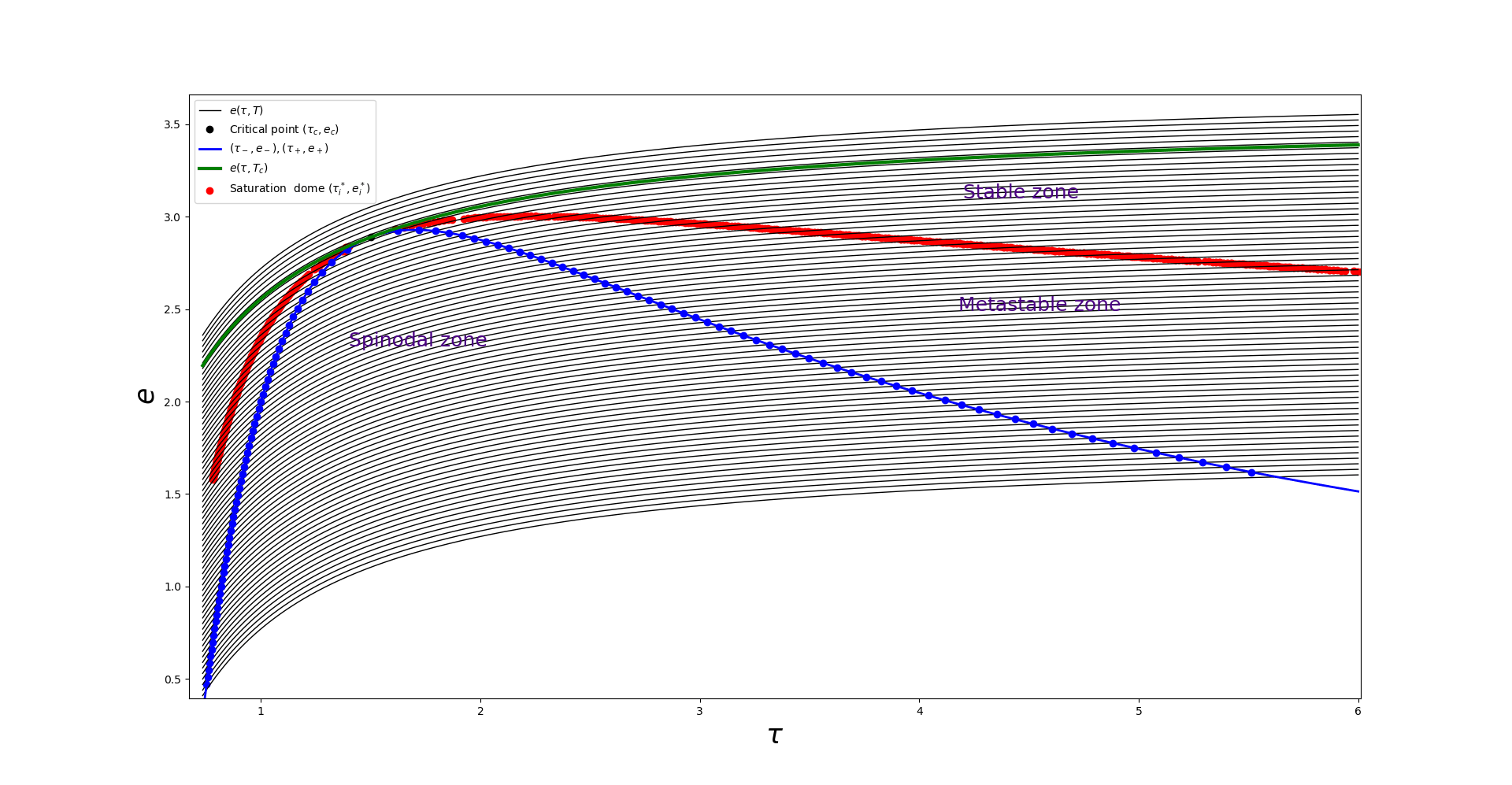}
  \caption{Isothermal curves of the van der Waals EoS in the
    $(\tau,e)$-plane.
    The black lines
    correspond to isothermal curves
    $e(\tau,T)$. The isothermal
    curve at the critical temperature $T=T_c$ is plotted in
    green. States belonging to the zone above the critical isothermal
    curve are supercritical states. The
    spinodal zone is delimited by the blue curve, which is the graph
    of the function $g.$ defined in \eqref{eq:function-g}.
    The saturation dome
    is represented by the set of red points. Stable states belong to
    the areas below the critical isothermal curve (in green) and above
    the saturation dome (in red). The metastable areas correspond to
    zones above the spinodal zone (in blue) and below the saturation
    dome (in red).}
  \label{fig:vdw_tau_e}
\end{figure}
We represent in Figure~\ref{fig:vdw_tau_e} the isothermal curves in
the $(\tau,e)$ plane. 
The spinodal zone corresponds to the domain where the concavity of the entropy
function $(\tau,e)\mapsto s(\tau,e)$ changes.
According to the definition \eqref{eq:hessian_s} of the Hessian matrix
$H_s(\tau,e)$ of the
entropy $s$, this domain is delimited by the set of states
$(\tau,e)\in D_s$ such that
\begin{equation}
  \label{eq:det_Hs}
  \text{det}(H_s)(\tau,e)=0.
\end{equation}
Solving \eqref{eq:det_Hs} allows to define the spinodal zone $Z_{\text{Spinodal}}\subset D_s$
\begin{equation}
  \label{eq:zone_spinodale}
  Z_{\text{Spinodal}}:=\{(\tau,e)  \in D_s; e<g(\tau) \},
\end{equation}
where 
\begin{equation} 
  \label{eq:function-g}
  g(\tau ) = \displaystyle \frac{2aC_{v}(\tau - b )^{2}}{R\tau^{3}}
  -\displaystyle \frac{a}{\tau}.
\end{equation}
The critical isothermal curve (green curve) admits a unique intersection point with
the graph of $g$ which turns to be the critical point $(\tau_c,e_c=g(\tau_c))$. 
The Maxwell construction, which is usually defined in the
$(\tau,p)$-plane, admits its counterpart in the
$(\tau,e)$-plane. Actually 
the construction of the
concave hull of the van der Waals entropy function
$(\tau,e)\mapsto
s$
is equivalent to
the Maxwell equal area rule construction \cite{caro2004,faccanoni12}.
An analogous proof, based on the 
properties of the Legendre transform, is available in \cite{HM10}.
In practice, the computation of the concave hull of the graph of $(\tau,e)\mapsto
s$ boils down to the construction of a ruled surface. 
For any point $(\tau,e)$, this ruled surface contains
a segment which is bitangent to the graph of $(\tau,e)\mapsto s$ in two
points denoted $(\tau_1^*,e_1^*)$ and $(\tau_2^*,e_2^*)$.
The set of points $((\tau_1^*,e_1^*),(\tau_2^*,e_2^*))$ defines the
saturation dome in the $(\tau,e)$ plane and is  represented
in red in
Figure~\ref{fig:vdw_tau_e}. Note that the computation of the points
$((\tau_1^*,e_1^*),(\tau_2^*,e_2^*))$ is not explicit and requires the resolution of a
nonlinear system \cite{caro2004,faccanoni12,HM10}. 
However if we assume that the set of the red dots is the graph of 
a function $g^*:\tau\to g^*(\tau)$, then the curve
$(\tau,g^*(\tau))$ defines the saturation dome $  Z_\text{Saturation}$, that is
\begin{equation}
  \label{eq:saturation_zone}
  Z_\text{Saturation}:=\{(\tau,e) \in D_s; \, e=g^*(\tau)\}.
\end{equation}
Thus the metastable states $Z_\text{Metastable}$ corresponds to the states belonging to the
saturation domain but outside the spinodal zone
\begin{equation}
  \label{eq:zone_metastable}
  Z_\text{Metastable}:=\{(\tau,e) \in D_s; \, g(\tau)<e<g^*(\tau)\}.
\end{equation}
Finally the stable zones, either stable liquid or stable vapor states, correspond to states below the critical
isotherm curve and above the saturation dome
\begin{equation}
  \label{eq:zone_stable}
  Z_\text{Stable}:=\{(\tau,e) \in D_s; \, g^*(\tau)<e<e(\tau, T_c) \}.
\end{equation}

 \section{Thermodynamics of equilibria for a multicomponent system}  
\label{sec:optim}

We consider a system of mass $M> 0$, volume $V>0$ and energy $E>0$
which is composed of
$I$ subsystems. 
Each subsystem $i=1,\dots,I$ is
characterized by its mass
$M_i \geq 0$, its volume $V_i \geq 0$ and its energy
$E_i \geq 0$.
Moreover, we assume that each subsystem $i$ follows the same non
concave entropy $S(M_i,V_i,E_i)$, namely the van der Waals EoS in its
extensive setting.
The conservations of mass and energy require that
\begin{equation}
  \label{eq:extensive_constraints}
  M=\sum_{i=1}^{I} M_i, \quad E=\sum_{i=1}^{I} E_i.
\end{equation}
Furthermore, we suppose that all the subsystems are immiscible
and that no vacuum appears, in the sense that
\begin{equation}
  \label{eq:extensive_constraints_V}
  V=\sum_{i=1}^{I} V_i.
\end{equation}
The entropy of the system is
the sum of the partial entropies of each subsystem:
\begin{equation*}
(M_{i},V_{i},E_i)_{i=1,\dots,I}\mapsto  \sum \limits_{i=1}^{I} S(M_{i},V_{i},E_i).
\end{equation*}
According to the second principle of thermodynamics, the entropy of the
multicomponent system achieves its maximum at Thermodynamic equilibrium.
Considering a state vector $(M,V,E)$ of the multicomponent system, the 
equilibrium entropy is
\begin{equation}
  \Sigma(M,V,E)=  \sup\limits_{(M_{i},V_{i},E_i)\in (\mathbb R^+)^3} \sum
  \limits_{i=1}^{I} S(M_{i},V_{i},E_i),
  \label{eq:equi_entrop_ext}
\end{equation}
under the constraints
\eqref{eq:extensive_constraints}-\eqref{eq:extensive_constraints_V}.

We now turn to the intensive formulation of the maximization problem.
In the following, we denote $\varphi_i=M_i/M
\in [0,1]$ the mass fraction, $\alpha_i=V_i/V\in [0,1]$ the volume
fraction and $\xi_i=E_i/E\in [0,1]$ the energy fraction.
Given $\tau=V/M$ and $e=E/M$ the specific volume and specific energy of the
multicomponent system, the specific volume of the subsystem $i=1,\dots,I$ is 
$\tau_i=V_i/M_i=\alpha_i\tau/\varphi_i
\geq 0$ and its specific energy is $e_i=E_i/M_i=z_ie/\varphi_i\geq 0$.

The conservation of mass and energy and the volume constraints read now
\begin{equation}
  \label{eq:int_constraints}
  \sum \limits_{i=1}^{I}\varphi_{i}=1,\quad
  \sum \limits_{i=1}^{I}\varphi_{i} \tau_i= \tau,\quad
  \sum \limits_{i=1}^{I}\varphi_{i} e_i= e.
\end{equation}

Using the homogeneity property of the extensive entropy function $S$, the
definitions of the mass fractions $\varphi_i$ and phasic intensive
quantities
$\tau_i$ and $e_i$, the intensive form of the equilibrium entropy of the system
is, for any state vector $(\tau,e)$
\begin{equation} 
\sigma(\tau,e)= \sup\limits_{(\tau_i,e_i)\in (\mathbb R^+)^2}
\sum
\limits_{i=1}^{I}\varphi_{i}s(\tau_{i},e_i),
\label{eq:equi_entrop_int}
\end{equation}
under the constraints \eqref{eq:int_constraints}.

\subsection{The Gibbs phase rule}
\label{sec:number_I}
For the moment the number $I$ of subsystems, 
potentially present at the thermodynamic equilibrium, is not determined.
Actually the theorem of Caratheodory gives a first estimate on the
number of subsystems $I$. We recall the theorem statement and
refer to \cite{rockafellar, HUL01} for a detailed proof.
\begin{Thm}{(Theorem of Caratheodory)}
  \label{thm:Caratheodory}
  Let $A$ be a subset in $\R ^n $ and $\text{conv}(A)$ the set of all
  the convex combinations of elements of $A$.
  Then every point $ x \in \text{conv} (A)$
  can be represented as a convex combination of $ (n + 1) $ points of
  $A$.
\end{Thm}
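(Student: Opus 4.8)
The plan is to start from an arbitrary representation of $x$ as a convex combination of finitely many points of $A$ and to reduce the number of points used, one at a time, until at most $n+1$ remain. By definition of $\operatorname{conv}(A)$, there exist an integer $k$, points $a_1,\dots,a_k\in A$, and weights $\lambda_1,\dots,\lambda_k\ge 0$ with $\sum_{i=1}^k\lambda_i=1$ such that $x=\sum_{i=1}^k\lambda_i a_i$. If $k\le n+1$ we are done (padding with zero weights if necessary). So the whole content lies in a single reduction step: assuming $k>n+1$, I would exhibit a representation of $x$ using only $k-1$ of the $a_i$, and then argue by induction (or by iterating) that the count drops to $n+1$.

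For the reduction step, the key observation is an affine dependence. Since $k>n+1$, the $k-1$ vectors $a_2-a_1,\dots,a_k-a_1$ live in $\R^n$ and number more than $n$, hence are linearly dependent. Thus there are scalars $\mu_2,\dots,\mu_k$, not all zero, with $\sum_{i=2}^k\mu_i(a_i-a_1)=0$. Setting $\mu_1=-\sum_{i=2}^k\mu_i$, I obtain coefficients $\mu_1,\dots,\mu_k$, not all zero, satisfying simultaneously
\begin{equation*}
  \sum_{i=1}^k \mu_i\, a_i = 0, \qquad \sum_{i=1}^k \mu_i = 0.
\end{equation*}
Consequently, for every real $t$ the perturbed weights $\lambda_i - t\mu_i$ still represent $x$ and still sum to one, because subtracting $t$ times the two displayed identities from $x=\sum\lambda_i a_i$ and $\sum\lambda_i=1$ changes nothing.

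It remains to choose $t$ so that all perturbed weights stay nonnegative while at least one of them becomes exactly zero; this is the heart of the argument. Because the $\mu_i$ sum to zero and are not all zero, at least one is strictly positive, so I may take
\begin{equation*}
  t=\min_{\{\,i\,:\,\mu_i>0\,\}}\frac{\lambda_i}{\mu_i}=\frac{\lambda_{i_0}}{\mu_{i_0}}\ge 0,
\end{equation*}
the minimum being attained at some index $i_0$. For this $t$ one checks that $\lambda_i-t\mu_i\ge 0$ for all $i$ (for $\mu_i\le 0$ this is immediate since $t\ge 0$; for $\mu_i>0$ it follows from $t$ being the minimum of $\lambda_i/\mu_i$), while $\lambda_{i_0}-t\mu_{i_0}=0$. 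Dropping the index $i_0$ then yields $x$ as a convex combination of $k-1$ points of $A$. Iterating this reduction lowers $k$ until $k\le n+1$, which establishes the claim. The only delicate point is precisely this selection of $t$: it must simultaneously eliminate one weight and preserve nonnegativity of the rest, and the ``minimum ratio'' choice is exactly what makes both happen at once.
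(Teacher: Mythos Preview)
Your argument is correct and is precisely the classical affine-dependence reduction proof of Carath\'eodory's theorem. The paper itself does not supply a proof of this statement; it merely cites the standard references \cite{rockafellar, HUL01}, where exactly this argument (or a minor variant of it) is given.
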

In the present context, the theorem provides the following first bound.
\begin{Prop}
  \label{prop:I_leq_3}
  Consider the  maximization problem \eqref{eq:equi_entrop_int} 
  under the constraints
  \eqref{eq:int_constraints}.
  The number of subsystems which may coexist at equilibrium is
  $I\leq 3$.
\end{Prop}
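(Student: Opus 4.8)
The plan is to read the supremum \eqref{eq:equi_entrop_int} as a statement about convex combinations in $\R^3$ and to extract the bound on $I$ from the geometry of a convex hull there. Concretely, I would introduce the graph of $s$,
\[
A = \{(\tau_i,e_i,s(\tau_i,e_i)) : (\tau_i,e_i)\in D_s\} \subset \R^3,
\]
and observe that any feasible family $(\tau_i,e_i,\varphi_i)_{i=1,\dots,I}$ satisfying \eqref{eq:int_constraints} produces the point $\sum_i \varphi_i (\tau_i,e_i,s(\tau_i,e_i)) = (\tau,e,\sum_i \varphi_i s(\tau_i,e_i))$, which is a convex combination of points of $A$ whose first two coordinates are pinned to $(\tau,e)$. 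Hence $\sigma(\tau,e)$ is the largest height $z$ for which $(\tau,e,z)\in\text{conv}(A)$, and the optimal point $P=(\tau,e,\sigma(\tau,e))$ lies on the upper boundary of $\text{conv}(A)$.

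A naive application of Theorem \ref{thm:Caratheodory} in $\R^3$ only yields $I\le 4$, so the whole point is to gain one unit by exploiting that $P$ is a boundary point rather than an interior one. I would argue as follows: since $P$ lies on the upper boundary of the convex set $\text{conv}(A)$, there is a supporting hyperplane $H=\{x\in\R^3 : \langle \nu,x\rangle = c\}$ at $P$, with $\langle \nu,x\rangle \le c$ for every $x\in\text{conv}(A)$ and $\langle \nu,P\rangle = c$. Writing $P=\sum_i \varphi_i P_i$ with $P_i\in A$ and $\varphi_i>0$ for the active phases, the identity $c=\langle\nu,P\rangle=\sum_i\varphi_i\langle\nu,P_i\rangle$ combined with $\langle\nu,P_i\rangle\le c$ forces $\langle\nu,P_i\rangle = c$ for every active $i$. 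Thus all active points $P_i$ lie in $H$, so that $P\in\text{conv}(A\cap H)$.

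It then remains to apply Carathéodory inside the hyperplane. Since $H$ is a $2$-dimensional affine subspace of $\R^3$, Theorem \ref{thm:Caratheodory} applied in $H\cong\R^2$ shows that $P$ is a convex combination of at most $2+1=3$ points of $A\cap H$. Discarding the phases carrying weight $\varphi_i=0$, this means that at most three subsystems are active at the optimum, which is precisely the claimed bound $I\le 3$.

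The step I expect to be the main obstacle is the clean justification that the optimum is realized on a genuine, non-vertical supporting hyperplane: one must ensure that the supremum in \eqref{eq:equi_entrop_int} is attained and that $P$ is a boundary point of $\text{conv}(A)$ admitting a supporting hyperplane whose normal has nonzero last coordinate (equivalently, that the concave hull of $s$ is finite and proper near $(\tau,e)$). This needs a mild growth and closedness control on $s$ over $D_s$ so that $\text{conv}(A)$ has a well-defined, non-degenerate upper boundary. Once this is secured, the supporting-hyperplane reduction and the two-dimensional application of Carathéodory above are routine.
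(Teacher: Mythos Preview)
Your argument is correct and is, in spirit, the same as the paper's: both identify the equilibrium entropy $\sigma(\tau,e)$ with the concave hull of $s$ and then invoke Carath\'eodory to bound the number of active phases by $3$. The difference is one of granularity. The paper applies directly the ``functional'' form of Carath\'eodory's theorem (as found in \cite{rockafellar,HUL01}): for a function on $\R^2$, the convex (resp.\ concave) envelope at any point is realized by a convex combination of at most $2+1=3$ points of the graph, so the formula
\[
(\mathrm{conv}\,(-s))(\tau,e)=\inf\Big\{\sum_{i=1}^{3}-\lambda_i s(\tau_i,e_i):\ \sum_i\lambda_i(\tau_i,e_i)=(\tau,e),\ \sum_i\lambda_i=1,\ \lambda_i\ge0\Big\}
\]
is simply quoted. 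You instead rederive this refinement from the basic $(n{+}1)$-point Carath\'eodory theorem in $\R^3$ by exploiting that $P=(\tau,e,\sigma(\tau,e))$ lies on the upper boundary of $\mathrm{conv}(A)$, passing to a supporting hyperplane $H\cong\R^2$, and applying Carath\'eodory there. That is precisely the standard proof of the functional Carath\'eodory statement the paper takes for granted, so your route is more self-contained while the paper's is more concise. The technical caveat you flag (attainment of the supremum and existence of a non-vertical supporting hyperplane) is a genuine point; the paper does not address it either and implicitly relies on the regularity of $s$ on $D_s$ and the references cited for the envelope formula.
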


\begin{proof}
  Consider $(\tau,e)\in (\mathbb R^+)^2\mapsto -s(\tau,e)\in \mathbb R$.
  According to Caratheodory's theorem,
  the convex hull of the epigraph of $-s$ at any point $(\tau,e)\in
  (\mathbb R^+)^2$ is
  \begin{equation}
    \label{eq:conv}
    (conv \, (-s)) (\tau,e) = \inf \sum_{i=1}^{3} -\lambda_i
    s(\tau_i,e_i),
  \end{equation}
  with
  $\sum_{i=1}^{3} \lambda_i \tau_i =\tau$ 
  and
  $\sum_{i=1}^{3} \lambda_i e_i =e$
  where the infimum is taken over all the expressions of $(\tau,e)$  as a
  convex combinations of three points $(\tau_i,e_i)$, $i=1,2,3$.
  Now considering $\lambda_i=\varphi_i$, we recover the intensive
  constraints \eqref{eq:int_constraints} and the maximization problem
  \eqref{eq:equi_entrop_int}
  is equivalent to the determination of the concave hull of $s$.
\end{proof}
As a consequence of Caratheodory's Theorem, 
at the most three phases
remain at thermodynamic equilibrium. 
This result is in total agreement with the Gibbs phase
rule. Indeed, considering a single component system, the Gibbs
phase rule states that  the number of phases is $I=3-F$, where $F\geq
0$ is the degree of freedom \cite{ball, mortimer}.

Actually when considering the van der Waals EoS, the admissible number
of subsystems present at Thermodynamic equilibrium restricts to at
most 2.

\begin{Thm}
  \label{thm:I=3}
  Consider the maximization problem \eqref{eq:equi_entrop_int},
  and assume that the entropy function $s$ verifies the inequality
  \eqref{eq:non_conv_property}. Then 
  \begin{equation}
    I(\tau,e)<3 \quad \forall (\tau,e)\in D_s.
  \end{equation}
\end{Thm}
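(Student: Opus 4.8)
The plan is to argue by contradiction, turning the coexistence of three phases into the existence of three contact points of a common supporting plane, and then to show that the strict concavity \eqref{eq:non_conv_property} in the energy direction collapses the problem to a one--dimensional concavification in $\tau$, where at most two points are allowed.

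First I would reformulate equilibrium through the concave hull. By Proposition \ref{prop:I_leq_3} the maximization \eqref{eq:equi_entrop_int}--\eqref{eq:int_constraints} amounts to computing the concave hull of $s$ at $(\tau,e)$. If three phases coexisted, there would be three distinct points $(\tau_i,e_i)$, $i=1,2,3$, with weights $\varphi_i>0$ and $\sum_i\varphi_i(\tau_i,e_i)=(\tau,e)$, all lying on a single supporting plane $P(\tau,e)=q^\ast\tau+\beta^\ast e+c$ of the graph of $s$; that is $s\le P$ on $D_s$, with $s(\tau_i,e_i)=P(\tau_i,e_i)$ and $\nabla s(\tau_i,e_i)=(q^\ast,\beta^\ast)$. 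In particular $\p s/\p e(\tau_i,e_i)=\beta^\ast$ for every $i$, so the three phases share the same temperature $T^\ast=1/\beta^\ast$ and the three contact points lie on one isotherm.

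Next I would exploit \eqref{eq:non_conv_property}. For fixed $\tau$ the map $e\mapsto s(\tau,e)-P(\tau,e)$ is strictly concave and nonpositive, hence attains its maximal value $0$ at a single $e$; consequently the three contact points have pairwise distinct abscissas, say $\tau_1<\tau_2<\tau_3$. Moreover, since $\p^2 s/\p e^2<0$, the equation $\p s/\p e(\tau,e)=\beta^\ast$ defines a unique $C^1$ curve $e=E(\tau)$ through the three contact points, and I would eliminate $e$ by the partial Legendre transform $\phi(\tau)=s(\tau,E(\tau))-\beta^\ast E(\tau)$. Setting $D(\tau)=\phi(\tau)-q^\ast\tau-c=s(\tau,E(\tau))-P(\tau,E(\tau))$ yields a smooth function on $\{\tau>b\}$ with $D\le 0$, $D(\tau_i)=0$ and $D'(\tau_i)=0$, whose second derivative is $D''=\det H_s/(\p^2 s/\p e^2)$ evaluated along $E(\tau)$, with $H_s$ given by \eqref{eq:hessian_s}.

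Finally I would conclude. Conceptually, $D$ is, up to the affine term $q^\ast\tau+c$, the restriction to the slope $\beta^\ast$ of the partial conjugate of $s$ in $e$, so determining the equilibrium reduces to concavifying the one--variable function $\phi$, and Caratheodory's theorem \ref{thm:Caratheodory} in dimension $n=1$ permits at most $n+1=2$ contact points, contradicting $\tau_1<\tau_2<\tau_3$. A concrete, self-contained version of the same step is a zero count: three interior maxima of $D\le 0$, together with a strict minimum between each consecutive pair (as $D$ is real-analytic and not affine, it dips strictly below $0$ in between), give $D'$ at least five zeros, hence $D''$ at least four by Rolle; but on the isotherm $T\equiv T^\ast$ one has $a/\tau+e\equiv C_vT^\ast$, so $s(\tau,E(\tau))=R\ln(\tau-b)+\text{const}$ and $D''(\tau)=-R/(\tau-b)^2+2a/(T^\ast\tau^3)$ vanishes only where $RT^\ast\tau^3=2a(\tau-b)^2$, a cubic with at most three roots, which is the desired contradiction. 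The delicate point is the reduction itself: one must check that the maximizer $E(\tau)$ is well defined and $C^1$ (ensured by \eqref{eq:non_conv_property} and the implicit function theorem) and, in the duality version, that the two points produced by the one--dimensional concavification still satisfy the energy barycenter constraint $\sum_i\varphi_i e_i=e$ of \eqref{eq:int_constraints}; this is exactly what stationarity in $\beta^\ast$ enforces.
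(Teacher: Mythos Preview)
Your argument is correct and takes a genuinely different, more explicit route than the paper's. The paper's proof is a four--line sketch: assuming $I=3$, the concave hull $\text{conc}(s)$ is affine on a $2$--simplex, so $\partial_e\text{conc}(s)$ is constant there; since $\text{conc}(s)$ is tangent to $s$ at the contact points, $\partial_e s$ takes the same value at all of them, and this is asserted to contradict \eqref{eq:non_conv_property}. The paper does not spell out that last implication, and in fact the bare hypothesis $\partial_e^2 s<0$ by itself does not exclude three non--collinear contact points for a general entropy (take $s(\tau,e)=-(e-g(\tau))^2/2+D(\tau)$ with $D\le 0$ vanishing at three abscissas and $g$ non--affine). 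What your partial--Legendre reduction plus the Rolle count buys is precisely the missing step, carried out with the explicit van der Waals structure that is the paper's actual setting: along the isotherm $T=T^\ast$ the reduced function $\phi$ is explicit, and the zero count closes the argument. Conversely, what the paper's approach buys is brevity and a clean geometric picture (a $2$--simplex in $(\tau,e)$ forces a vertical segment on which the hull is affine), at the cost of leaving the hard inequality implicit.

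Two small remarks on your write--up. First, the sentence invoking Carath\'eodory in dimension $n=1$ to bound the number of \emph{contact points} by $2$ is a slight misuse: Carath\'eodory bounds the length of an optimal convex combination, not the number of points where a supporting line may touch a graph. You rightly back this up with the explicit zero count, which is the real proof; you could even stop one derivative earlier, since already $D'(\tau)=R/(\tau-b)-a/(T^\ast\tau^2)-q^\ast$ clears to a cubic in $\tau$ and therefore has at most three zeros, whereas three interior maxima of $D\le 0$ together with Rolle force at least five. Second, the closing ``delicate point'' about recovering the energy barycenter is unnecessary here: you argue by contradiction from an assumed three--phase equilibrium, so no reconstruction of a two--point maximizer is required.
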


\begin{proof}
Assume that $I=3$ and consider a point $X\in D_s$. 
Then $X$ belongs to a simplex of dimension 2. 
On the one hand, inside this simplex, the concave hull of $s$, denoted
$\text{conc}(s)$ is an affine function.
It follows that the partial derivatives of $\text{conc}(s)$,
$\partial_\tau \text{conc}(s)(X)$ and
$\partial_e \text{conc}(s)(X)$
are constant.
On the other hand, at the boundaries of the simplex,
the concave hull $\text{conc}(s)$ is tangent to the surface
$(\tau,e)\mapsto s$. Hence
$\partial_e \text{conc}(s)(X)= \partial_e s(X)$, 
which leads to  a
contradiction with property \eqref{eq:non_conv_property}.
\end{proof}
According to Theorem~\ref{thm:I=3}, 
the maximization process using the van der Waals EoS does not allow
the coexistence of more than two phases and prevents from the
modelling of a triple point.

\subsection{Maxima of the constrained optimization problem}

From now on we consider $I=2$ and consider the optimization problem
\begin{equation}
  \label{eq:max_entrop_2}
  \sigma(\tau,e) = \max
  \mathscr{S}(\varphi_1,\varphi_2,\tau_1,\tau_2,e_1,e_2),
\end{equation}
where
\begin{equation}
  \label{eq:S_mix}
  \mathscr{S}(\varphi_1,\varphi_2,\tau_1,\tau_2,e_1,e_2)=\varphi_1
  s(\tau_1,e_1)+\varphi_2s(\tau_2,e_2),
\end{equation}
under the constraints
\begin{equation}
  \label{eq:cons_2}
  \varphi_1+\varphi_2=1, \quad
  \varphi_1\tau_1+\varphi_2\tau_2=\tau,\quad
  \varphi_1e_1+\varphi_2e_2=e.
\end{equation}
Note that if $\rho_1\neq \rho_2$ and $e_1\neq e_2$,
\eqref{eq:cons_2} imply that
the mass fractions $\varphi_i$, $i=1,2$ satisfy
\begin{equation}
  \label{eq:mass-fraction}    
  \varphi_1=
  \dfrac{\tau-\tau_2}{\tau_1-\tau_2}=
  \dfrac{e-e_2}{e_1-e_2}
  , \quad
  \varphi_2=
  \dfrac{\tau-\tau_1}{\tau_2-\tau_1}=
  \dfrac{e-e_1}{e_2-e_1}.
\end{equation}
On the other hand, if $\tau=\tau_1=\tau_2$ and $e=e_1=e_2$, the mass
fraction is undetermined.
In order to preserve the positivity of the fractions, we assume that
\begin{equation}
  \label{eq:H**}   
  (\tau,e) \in [\min(\tau_1,\tau_2), \max(\tau_1,\tau_2)] \times
  [\min(e_1,e_2), \max(e_1,e_2)].
\end{equation}

Introducing the Lagrange multipliers $\lambda_\varphi$, $\lambda_\tau$
and $\lambda_e$ associated to the constraints \eqref{eq:cons_2},
we define the Lagrangian 
\begin{equation}
  \label{eq:Lag}
  L(\lambda_{\varphi},\lambda_{\tau},\lambda_e,x)=
  \mathscr{S} (x)+\lambda_{\varphi} \beta_{\varphi}(x) +\lambda_{\tau}
  \beta_{\tau}(x)+\lambda_e \beta_e(x),
\end{equation}
with
$x=(\varphi_1,\varphi_2,\tau_1,\tau_2,e_1,e_2)$ and 
\begin{equation}
\label{eq:cond_3}
  \begin{cases}
    \beta_{\varphi}(x)=\varphi_{1}+\varphi_{2}-1,\\
    \beta_\tau(x)=\varphi_{1} \tau_{1}+\varphi_{2} \tau_{2}-\tau,\\
    \beta_{e}(x)=\varphi_{1} e_{1}+\varphi_{2} e_{2}-e.
  \end{cases}
\end{equation}
Since $\mathscr{S}$ is $C^1$
and the conditions \eqref{eq:cond_3}
are affine, 
we obtain straightforwardly the optimality conditions for the maxima
in the problem \eqref{eq:max_entrop_2}-\eqref{eq:cons_2}:
\begin{subequations}
  \label{eq:optimality-system}
  \begin{gather}
    \label{eq:optimality-system1}
    s(\tau_{1},e_1)+\lambda_{\varphi}+\lambda_{\tau}\tau_{1}+\lambda_{e}e_1=0,\\
    \label{eq:optimality-system2}
    s(\tau_{2},e_2)+\lambda_{\varphi}+\lambda_{\tau}\tau_{2}+\lambda_{e}e_2=0,\\
    \label{eq:optimality-system3}
    \varphi_{1}\displaystyle \dfrac{p(\tau_1,e_1)}{T
      (\tau_{1},e_1)}+\varphi_{1}\lambda_{\tau}=0,\\
    \label{eq:optimality-system4}
    \varphi_{2}\displaystyle \dfrac{p(\tau_2,e_2)}{T
      (\tau_{2},e_2)}+\varphi_{2}\lambda_{\tau}=0,\\
    \label{eq:optimality-system5}
    \varphi_{1}\displaystyle \dfrac{1}{T(\tau_{1},e_1)}+\varphi_{1}\lambda_{e}=0,\\
    \label{eq:optimality-system6}
    \varphi_{2}\displaystyle \dfrac{1}{T(\tau_{2},e_2)}+\varphi_{2}\lambda_{e}=0.
  \end{gather}
\end{subequations} 
We now turn to the determination of
the maxima of the problem \eqref{eq:max_entrop_2}. 
It turns out that it 
involves the notion of relative entropy, which is defined, for any
two states $a,b\in (\mathbb R^+)^2$ by
\begin{equation}
  \label{eq:relatentrop}
  s(a|b)=s(a)-s(b)-\nabla s(b)\cdot (a-b).
\end{equation}

\begin{Prop}
\label{prop:opti_equi}
The maxima of the problem \eqref{eq:max_entrop_2}-\eqref{eq:cons_2} are
\begin{enumerate}
\item \label{it:pure} \textbf{Identification of phases 1 and 2:}
  \begin{itemize}
  \item$\tau_1=\tau_2=\tau$ and $e_1=e_2=e$, $\varphi_i$ undetermined,
  \item $\varphi_1=0$, $\varphi_2=1$, $(\tau_2,e_2)=(\tau,e)$ and $(\tau_1,e_1)$ solution to 
    \begin{equation}
      \label{eq:pure1}
      \begin{cases}
        s\big((\tau_1,e_1)|(\tau,e)\big) &= 0,\\
        \mu(\tau_1,e_1)/T(\tau_1,e_1) &= \mu(\tau,e)/T(\tau,e).
      \end{cases}
    \end{equation}
  \item$\varphi_1=1$, $\varphi_2=0$, $(\tau_1,e_1)=(\tau,e)$ and $(\tau_2,e_2)$ solution to 
    \begin{equation}
      \label{eq:pure2}
      \begin{cases}
        s\big((\tau_2,e_2)|(\tau,e)\big) &= 0,\\
        \mu(\tau_2,e_2)/T(\tau_2,e_2) &= \mu(\tau,e)/T(\tau,e).
      \end{cases}
    \end{equation}
  \end{itemize}
\item\label{it:mix} \textbf{Saturation states:} 
there exists a unique couple of points\\
\centerline{ $M_1^*=(\tau_1^*,e_1^*, s(\tau_1^*,e_1^*))$ and
$M_2^*=(\tau_2^*,e_2^*,s (\tau_2^*,e_2^*))$}\\
 with $\tau \in [\min(\tau_1^*, \tau_2^*), \max(\tau_1^*,\tau_2^*)]$
and $e \in [\min(e_1^*,e_2^*), \max(e_1^*, e_2^*)]$ given by 
\eqref{eq:mass-fraction}, satisfying
\begin{equation}
  \label{eq:egalite}
  \begin{cases}
    p(\tau_1^*,e_1^*)=p(\tau_2^*,e_2^*),\\
    \mu(\tau_1^*,e_1^*)=\mu(\tau_2^*,e_2^*),\\
    T(\tau_1^*,e_1^*)=T(\tau_2^*,e_2^*),
  \end{cases}
\end{equation}
such that $M=(\tau,e,\sigma(\tau,e))$ belongs to the line segment 
$(M_1^*,M_2^*)=\{zM_1^*+(1-z)M_2^*, \; z\in [0,1]\}$ contained in the
concave hull $\text{conc}(s)$.
\end{enumerate} 
\end{Prop}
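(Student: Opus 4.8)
The plan is to read off the two families of maxima directly from the first-order optimality system \eqref{eq:optimality-system}, organising the discussion according to whether both mass fractions are positive or one of them vanishes, and then to certify that the resulting critical configurations are genuine global maxima by invoking the concave-hull reformulation of Proposition~\ref{prop:I_leq_3}, namely $\sigma(\tau,e)=\text{conc}(s)(\tau,e)$. In this way the alphabetic dichotomy \emph{saturation} versus \emph{identification} will mirror a geometric one: either $\text{conc}(s)$ touches $s$ at $(\tau,e)$ and no splitting raises the entropy, or there is a strict gap $\text{conc}(s)(\tau,e)>s(\tau,e)$ and a genuine bitangent splitting is forced.

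First I would assume $\varphi_1,\varphi_2>0$. Dividing \eqref{eq:optimality-system5}--\eqref{eq:optimality-system6} by the $\varphi_i$ gives $1/T(\tau_1,e_1)=1/T(\tau_2,e_2)=-\lambda_e$, hence equal temperatures; dividing \eqref{eq:optimality-system3}--\eqref{eq:optimality-system4} by the $\varphi_i$ and using this yields $p(\tau_1,e_1)=p(\tau_2,e_2)$. Since $\partial_\tau s=p/T$ and $\partial_e s=1/T$, these two identities say exactly that $\nabla s(\tau_1,e_1)=\nabla s(\tau_2,e_2)=(-\lambda_\tau,-\lambda_e)$, i.e. the two contact points share a common tangent plane (bitangency). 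Reading off the intercept of that plane through \eqref{eq:gibbs-intensive} in \eqref{eq:optimality-system1}--\eqref{eq:optimality-system2} gives $\mu(\tau_1,e_1)/T(\tau_1,e_1)=\mu(\tau_2,e_2)/T(\tau_2,e_2)=\lambda_\varphi$, and with equal temperatures this upgrades to $\mu(\tau_1,e_1)=\mu(\tau_2,e_2)$. These are precisely the equalities \eqref{eq:egalite}, and the constraints \eqref{eq:cons_2} place $M=(\tau,e,\sigma(\tau,e))$ on the bitangent segment $(M_1^*,M_2^*)$ with $\varphi_i$ given by \eqref{eq:mass-fraction}.

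Next I would treat the case where one fraction vanishes, say $\varphi_1=0$, $\varphi_2=1$. The constraints \eqref{eq:cons_2} immediately force $(\tau_2,e_2)=(\tau,e)$, while \eqref{eq:optimality-system3} and \eqref{eq:optimality-system5} become vacuous. Equations \eqref{eq:optimality-system4},\eqref{eq:optimality-system6} fix $\lambda_\tau=-p(\tau,e)/T(\tau,e)$ and $\lambda_e=-1/T(\tau,e)$, and \eqref{eq:optimality-system2} fixes $\lambda_\varphi=\mu(\tau,e)/T(\tau,e)$ via \eqref{eq:gibbs-intensive}. Substituting these multipliers into \eqref{eq:optimality-system1} and recognising the definition \eqref{eq:relatentrop} collapses that equation to $s\big((\tau_1,e_1)\,|\,(\tau,e)\big)=0$, the first line of \eqref{eq:pure1}. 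The second line I would obtain by demanding that this boundary configuration be a limit of interior critical points: the $\varphi_1\to0^+$ limit of \eqref{eq:optimality-system3},\eqref{eq:optimality-system5} forces $\nabla s(\tau_1,e_1)=\nabla s(\tau,e)$, whence in particular the chemical-potential equality of \eqref{eq:pure1}. The fully degenerate configuration $\tau_1=\tau_2=\tau$, $e_1=e_2=e$ with $\varphi$ free solves \eqref{eq:optimality-system} directly with the same multipliers, and the case $\varphi_2=0$ is symmetric, giving \eqref{eq:pure2}.

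The main obstacle lies beyond this formal computation and is twofold. The first part is promoting these critical configurations to global maxima, where the identity $\sigma=\text{conc}(s)$ of Proposition~\ref{prop:I_leq_3} is decisive: it shows that the single-phase value $s(\tau,e)$ is optimal exactly when $\text{conc}(s)(\tau,e)=s(\tau,e)$ (the identification regime, where $s\big((\tau_1,e_1)\,|\,(\tau,e)\big)\le 0$ for every competitor), and that a strict gap $\text{conc}(s)(\tau,e)>s(\tau,e)$ forces a genuine bitangent splitting (the saturation regime). The second and most delicate part is the existence and uniqueness of the saturation couple $(M_1^*,M_2^*)$, which is not a consequence of the first-order system but of the specific geometry of the van der Waals entropy: its single non-concave bump makes $D_s\setminus D_c$ foliated by bitangent segments, so that exactly one such segment passes through each $(\tau,e)$ lying strictly below the hull. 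I would establish this from the construction of the saturation dome $Z_{\text{Saturation}}$ and the function $g^*$ of Section~\ref{sec:vdw} together with the sign of $\det H_s$, the bound $\partial^2_{ee}s<0$ of \eqref{eq:non_conv_property} ensuring that each bitangent is nondegenerate. The degeneracy of \eqref{eq:optimality-system3},\eqref{eq:optimality-system5} at a vanishing phase is what makes the boundary (chemical-potential) analysis the other point requiring care.
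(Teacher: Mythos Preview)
Your argument tracks the paper's almost step for step: the same case split on whether $\varphi_1\varphi_2>0$, the same extraction of \eqref{eq:egalite} from \eqref{eq:optimality-system3}--\eqref{eq:optimality-system6} and of $\lambda_\varphi=\mu/T$ from \eqref{eq:optimality-system1}--\eqref{eq:optimality-system2} via \eqref{eq:gibbs-intensive}, and the same appeal to the concave-hull reformulation for the geometric interpretation. Two points of departure are worth flagging.

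For uniqueness of the bitangent segment $(M_1^*,M_2^*)$, the paper does not use your foliation picture. Instead it argues that outside the spinodal zone the van der Waals entropy is strictly concave, so $(\tau_i,e_i)\mapsto(p,T)$ is a bijection there; since $(p,T,\mu)$ are constant along any bitangent segment through $(\tau,e,\sigma(\tau,e))$, two candidate segments must share endpoints. Your argument and the paper's are equivalent in spirit, but the paper's is shorter and avoids having to describe the global bump structure.

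For the second line of \eqref{eq:pure1}, the paper does not pass to a limit $\varphi_1\to 0^+$; it asserts directly that both lines follow from \eqref{eq:optimality-system1}--\eqref{eq:optimality-system2} together with \eqref{eq:relatentrop} and \eqref{eq:gibbs-intensive}. Your limit argument is more explicit about where the chemical-potential equality comes from, but note that it actually delivers the full gradient match $\nabla s(\tau_1,e_1)=\nabla s(\tau,e)$, which combined with the relative-entropy line gives three equations for the two unknowns $(\tau_1,e_1)$; this is consistent only when $(\tau,e)$ itself lies on the saturation dome, so the boundary case you isolate is really the degenerate endpoint of case~\eqref{it:mix} rather than a separate one-parameter family. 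Neither your write-up nor the paper's is fully transparent on this point.
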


\begin{proof}
  The first case $\tau=\tau_1=\tau_2$ and $e=e_1=e_2$ is straightforward.
  We focus on the case $\varphi_1=0$. 
  The mass conservation constraint induces $\varphi_2=1$ and thus
  $\tau_2=\tau$ and $e_2=e$.
  Then the optimality conditions
  \eqref{eq:optimality-system4} and \eqref{eq:optimality-system6} 
  give
  \begin{equation*}
    \lambda_\tau=-p(\tau,e)/T(\tau,e),
    \quad
    \lambda_e=-1/T(\tau,e).
  \end{equation*}
  Associated with the conditions
  \eqref{eq:optimality-system1}
  and   \eqref{eq:optimality-system2},
  the definition of the relative entropy \eqref{eq:relatentrop} and
  the definition of the chemical potential \eqref{eq:gibbs-intensive},
  one determines $(\tau_1,e_1)$ as the solution of \eqref{eq:pure1}.
  The same holds for the case $\varphi_1=1$.

  We now consider the saturation case. It is characterized by
  $\varphi_1\varphi_2\neq 0$.
  The optimization procedure also reads as a convexification of
  $(\tau,e)\mapsto s(\tau,e)$ in the sense that the graph of
  $(\tau,e)\mapsto \sigma(\tau,e)$ is the concave hull of
  $(\tau,e)\mapsto s(\tau,e)$, see the definition \eqref{eq:conv}.
  Then for any saturation state $(\tau,e)$, the graph  of $\sigma$
  contains a segment $(M_1^*M_2^*)$ passing through $(\tau,e,\sigma(\tau,e))$.
  The characterization \eqref{eq:egalite} of the points $M_1^*$ and
  $M_2^*$ derives from the optimality conditions.
  Combining \eqref{eq:optimality-system5} and
  \eqref{eq:optimality-system6} gives the temperatures equality
  \begin{equation*}
    \dfrac{1}{T(\tau_1,e_1)} =  \dfrac{1}{T(\tau_2,e_2)}.
  \end{equation*}
    Similarly using \eqref{eq:optimality-system3} and \eqref{eq:optimality-system4}, 
  yields
  \begin{equation*}
    \dfrac{p(\tau_1,e_1)}{T(\tau_1,e_1)} = \dfrac{p(\tau_2,e_2)}{T(\tau_2,e_2)}.
  \end{equation*}
  Finally \eqref{eq:optimality-system1} and \eqref{eq:optimality-system2},
  combined with the definition of the chemical potential
  \eqref{eq:gibbs-intensive}, give
  \begin{equation*}
    \dfrac{\mu(\tau_1,e_1)}{T(\tau_1,e_1)} = \dfrac{\mu(\tau_2,e_2)}{T(\tau_2,e_2)}.
  \end{equation*}
  We now address the uniqueness of the segment $(M_1^*M_2^*)$.
  Outside the spinodal zone, the van der Waals entropy is a concave
  and increasing function with respect to $\tau$ and $e$. Then there
  is a bijection between $(p,T)$ and $(\tau_i,e_i)$, $i=1,2$. Define
  $\widetilde M_1^*=(\widetilde \tau_1^*,\widetilde e_1^*)$ 
  and   $\widetilde M_2^*=(\widetilde \tau_2^*,\widetilde e_2^*)$.
  If $(\tau,\sigma(\tau,e))\in (M_1^*M_2^*)\cap(\widetilde
  M_1^*\widetilde M_2^*)$, since $(p,T,\mu)$ are constant along
  $(M_1^*M_2^*)$ and $(\widetilde M_1^*\widetilde M_2^*)$, then
  $M_i^*=\widetilde M_i^*$ and the segments coincide.
\end{proof}

Notice that, for a given saturation state $(\tau,e)$, the quadruplet 
$(\tau_1^*,\tau_2^*,e_1^*,e_2^*)$ satisfies also
\begin{equation}
  \label{eq:coexistence_relat}
  s((\tau_1^*,e_1^*)|(\tau_2^*,e_2^*)) 
=   s((\tau_2^*,e_2^*)|(\tau_1^*,e_1^*)) 
=0.
\end{equation}

We emphasize that the necessary conditions in Proposition
\ref{prop:opti_equi}
include all equilibrium states, regardless of their stability.
In particular we recover in item \eqref{it:pure} the complete van der
Waals EoS, including physically unstable states (spinodal zone),
and liquid and vapor metastable and stable states.

To proceed further the classical method consists in studying the local
concavity of the mixture entropy out equilibrium $\mathscr{S}$ introduced
in \eqref{eq:S_mix}. We adopt here the approach proposed in
\cite{james}.
We introduce a relaxation towards the equilibrium states by means of a
dynamical system.

\section{Dynamical system and attraction bassins}
\label{sec:syst_dyn}

The goal of this section is to introduce time dependence to create a
dynamical system 
able to  characterize all the equilibrium states including the 
metastable states.
To build the appropriate dynamical system, we impose two basic
criteria:
\begin{itemize}
\item long-time equilibria coincide with the maxima given by the
  optimality conditions in Proposition~\ref{prop:opti_equi}.
\item the mixture entropy increases along trajectories.
\end{itemize}

Fix $(\tau,e)$ a state vector of the system.
The maximization problem 
applies to six variables under the three constraints~\eqref{eq:cons_2}.
Hence it is sufficient to  reduce the variables from six to three.
We consider the vector of volume, mass and energy fractions 
$\mathbf r =(\alpha,\varphi,\xi)$.
Then the phasic specific energies and volumes are now functions of
$\mathbf r =(\alpha,\varphi,\xi)\in ]0,1[^3$ with
\begin{equation}
  \label{eq:volumes_energies_fractions}
  \begin{aligned}
    \tau_1(\mathbf r)&=\dfrac{\alpha \tau}{\varphi},\quad
    \tau_2(\mathbf r)=\dfrac{(1-\alpha)\tau}{1-\varphi},\\ 
    e_1(\mathbf r)&=\dfrac{\xi e}{\varphi},\quad
    e_2(\mathbf r)=\dfrac{(1-\xi)e}{1-\varphi}.
\end{aligned}
\end{equation}
The formulas in \eqref{eq:volumes_energies_fractions} do not suggest
any natural order in the volumes nor energies. Besides it is possible that the
phasic specific volumes (resp. energies) coincide. Indeed, if $\alpha=\varphi=\xi\in
]0,1[$, then $\tau_1(\mathbf{r})=\tau_2(\mathbf{r})=\tau$ and
$e_1(\mathbf{r})=e_2(\mathbf{r})=e$.
Hence the constraint
\eqref{eq:H**} still remains.

In this context, the mixture entropy of the system becomes a function
of $\mathbf r$, still denoted $\mathscr{S}$:
\begin{equation}
  \label{eq:Sr}
  \mathscr{S}(\mathbf r) = \varphi s(\tau_1(\mathbf r),e_1(\mathbf r))
  +(1-\varphi)s(\tau_2(\mathbf r),e_2(\mathbf r)).
\end{equation}
Using the relations \eqref{eq:volumes_energies_fractions} and
the expressions \eqref{eq:intensive-form}  of the partial
derivatives of the entropy function,
the gradient of $\mathcal S$ reads
\begin{equation}
\label{eq:nabla_S}
  \nabla_\mathbf r  \mathscr{S} (\mathbf r)=
  \begin{pmatrix}
    \tau\dfrac{ p(\tau_1 (\mathbf r),e_1 (\mathbf r))}{T(\tau_1 (\mathbf r),e_1 (\mathbf r))} - \tau \dfrac{
      p(\tau_2 (\mathbf r),e_2 (\mathbf r))}{T(\tau_2 (\mathbf r),e_2 (\mathbf r))}\\
    -\dfrac{\mu(\tau_1 (\mathbf r),e_1 (\mathbf r))}{T(\tau_1 (\mathbf r),e_1 (\mathbf r))}+
    \dfrac{\mu(\tau_2 (\mathbf r),e_2 (\mathbf r))}{T(\tau_2 (\mathbf r),e_2 (\mathbf r))}\\
    \dfrac{e}{T(\tau_1 (\mathbf r),e_1 (\mathbf r))}- \dfrac{e}{T(\tau_2 (\mathbf r),e_2 (\mathbf r))}
  \end{pmatrix}.
\end{equation}
Observe that both $\mathscr{S}$ and $\nabla_\mathbf r  \mathscr{S}$
are defined only for $(\alpha,\varphi,\xi)\in ]0,1[^3$.

We wish to construct a dynamical system which complies with the
entropy growth criterion in the sense that
entropy increases along the trajectories \textit{i.e.}
$\text{d}/\text{dt}\mathscr{S}(\mathbf{r}(t))\geq 0$.
A naive choice is to choose $\dot{\mathbf r}$ close to $\nabla_\mathbf
r  \mathscr{S}$. We introduce the
following dynamical system:
\begin{equation}
  \label{eq:fraction-model}
  \begin{cases}
    \dot{\alpha}(t)=\alpha(1-\alpha) \tau\Big(
    \dfrac{p(\tau_1(\mathbf{r}),e_1(\mathbf{r}))}{T(\tau_1(\mathbf{r}),e_1(\mathbf{r}))} - 
    \dfrac{p(\tau_2(\mathbf{r}),e_2(\mathbf{r}))}{T(\tau_2(\mathbf{r}),e_2(\mathbf{r}))}\Big),\\
    \dot{\varphi}(t)=\varphi (1-\varphi) \Big(
    \dfrac{\mu(\tau_2(\mathbf{r}),e_2(\mathbf{r}))}{T(\tau_2(\mathbf{r}),e_2(\mathbf{r}))}-
    \dfrac{\mu(\tau_1(\mathbf{r}),e_1(\mathbf{r}))}{T(\tau_1(\mathbf{r}),e_1(\mathbf{r}))} \Big),\\
    \dot{\xi}(t)=\xi(1-\xi)e\Big(
    \dfrac{1}{T(\tau_1(\mathbf{r}),e_1(\mathbf{r}))}-
    \dfrac{1}{T(\tau_2(\mathbf{r}),e_2(\mathbf{r}))}\Big).
\end{cases}
\end{equation}

\begin{Prop}
  \label{prop:sys_dyn_prop}
  The dynamical system \eqref{eq:fraction-model} satisfies the
  following properties.
  \begin{enumerate}
  \item \label{it:prop1} If $\mathbf{r}(0)\in ]0,1[^3$ then, for all time
    $t>0$, one has
    $\mathbf{r}(t) \in ]0,1[^3$.
  \item \label{it:prop2} If the condition \eqref{eq:H**} is satisfied at $t=0$, 
    then $(\tau,e)$ belongs to the segment
    $[(\tau_1(\mathbf{r}) ,e_1(\mathbf{r}))(t),
    (\tau_2(\mathbf{r}),e_2(\mathbf{r}))(t)]$ for all time $t>0$.
  \item \label{it:prop3} The mixture entropy increases along the trajectories 
    \begin{equation}
      \label{eq:Sdot}
      \dfrac{d}{dt}\mathscr{S}(\mathbf r(t))\geq 0.
    \end{equation}
  \end{enumerate}
\end{Prop}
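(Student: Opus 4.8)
The plan rests on a single structural observation: the field in \eqref{eq:fraction-model} is a reparametrised gradient ascent for $\mathscr{S}$. Comparing \eqref{eq:fraction-model} with \eqref{eq:nabla_S} and writing $\mathbf r=(\alpha,\varphi,\xi)$, one has
\begin{equation*}
  \dot{\mathbf r}(t)=D(\mathbf r)\,\nabla_{\mathbf r}\mathscr{S}(\mathbf r),\qquad
  D(\mathbf r)=\mathrm{diag}\!\big(\alpha(1-\alpha),\,\varphi(1-\varphi),\,\xi(1-\xi)\big),
\end{equation*}
so each component of $\dot{\mathbf r}$ is the matching component of $\nabla_{\mathbf r}\mathscr{S}$ times a strictly positive logistic weight on $]0,1[^3$. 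I would record this identity first, as it underlies all three items. Local existence and uniqueness of $\mathbf r(t)$ follow at once: $s\in C^2(D_s)$ makes $p,T,\mu$, and hence the right-hand side, locally Lipschitz wherever $(\tau_i,e_i)\in D_s$.

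With this in hand, item \eqref{it:prop3} is the easy part. By the chain rule and the diagonal structure,
\begin{equation*}
  \frac{d}{dt}\mathscr{S}(\mathbf r(t))
  =\nabla_{\mathbf r}\mathscr{S}\cdot\dot{\mathbf r}
  =\nabla_{\mathbf r}\mathscr{S}^{\mathsf T}D(\mathbf r)\,\nabla_{\mathbf r}\mathscr{S}
  =\sum_{i}D_i(\mathbf r)\big(\partial_{r_i}\mathscr{S}\big)^2\ \geq 0,
\end{equation*}
each $D_i$ being positive on $]0,1[^3$, which is exactly \eqref{eq:Sdot}.

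The heart of the argument, and the step I expect to be the main obstacle, is the invariance \eqref{it:prop1} of the open cube, because $\nabla_{\mathbf r}\mathscr{S}$ can blow up as $\mathbf r$ approaches $\partial[0,1]^3$ and because the flow is only defined while $(\tau_i,e_i)\in D_s$, i.e. while $\tau_i>b$. The logistic prefactors invite the logit change of variables $a=\ln\frac{\alpha}{1-\alpha}$, $f=\ln\frac{\varphi}{1-\varphi}$, $x=\ln\frac{\xi}{1-\xi}$, which turns \eqref{eq:fraction-model} into $(\dot a,\dot f,\dot x)=\nabla_{\mathbf r}\mathscr{S}$; reaching a face of the cube in finite time is then the same as one of $a,f,x$ escaping to $\pm\infty$ in finite time. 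I would rule this out by a dichotomy along the motion: wherever the relevant component of $\nabla_{\mathbf r}\mathscr{S}$ stays bounded, the logit variable has bounded speed and cannot escape in finite time; wherever it diverges, it does so with an inward sign and repels the trajectory. The inward sign is the crux and comes from the thermodynamics: as $\varphi\to0^+$ the vanishing phase has $\tau_1$ or $e_1\to+\infty$, so through \eqref{eq:gibbs-intensive} one gets $\mu(\tau_1,e_1)/T(\tau_1,e_1)\to-\infty$ and $\dot f\to+\infty$ pushes $\varphi$ back inside (symmetrically at $\varphi\to1^-$); and as $\tau_i\to b^+$, the only way to exit $D_s$, the factor $p(\tau_i,e_i)/T(\tau_i,e_i)\sim R/(\tau_i-b)\to+\infty$ drives $\tau_i$ away from $b$. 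To control the partner phase while making these estimates I would invoke \eqref{it:prop3}: $\mathscr{S}(\mathbf r(t))$ is nondecreasing and bounded above by $\sigma(\tau,e)$, so it stays in $[\mathscr{S}(\mathbf r(0)),\sigma(\tau,e)]$, and since $s(\tau_i,e_i)\to-\infty$ as $\tau_i\to b^+$, this Lyapunov bound confines each $\tau_i$ to a compact subinterval of $(b,\infty)$ and supplies the uniform estimates needed to close the bounded-speed alternative.

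Finally, item \eqref{it:prop2} is a direct consequence of \eqref{it:prop1}. The definitions \eqref{eq:volumes_energies_fractions} yield, for every $\mathbf r$, the identities $\tau=\varphi\,\tau_1(\mathbf r)+(1-\varphi)\,\tau_2(\mathbf r)$ and $e=\varphi\,e_1(\mathbf r)+(1-\varphi)\,e_2(\mathbf r)$; once \eqref{it:prop1} guarantees $\varphi(t)\in]0,1[$, these exhibit $(\tau,e)$ as a convex combination with strictly positive weights of $(\tau_1,e_1)(t)$ and $(\tau_2,e_2)(t)$, hence $(\tau,e)$ lies on the segment joining them and the box condition \eqref{eq:H**} holds for all $t>0$.
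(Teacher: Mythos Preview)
Your proof is correct and, for items \eqref{it:prop2} and \eqref{it:prop3}, matches the paper's argument closely: both exploit the identity $\dot{\mathbf r}=D(\mathbf r)\nabla_{\mathbf r}\mathscr{S}$ with positive diagonal weights to obtain \eqref{eq:Sdot}, and both recover the segment property directly from the convex-combination identities built into \eqref{eq:volumes_energies_fractions}. For item \eqref{it:prop1}, however, the paper is much terser: it simply notes that each logistic prefactor $\alpha(1-\alpha)$, $\varphi(1-\varphi)$, $\xi(1-\xi)$ makes the corresponding right-hand side vanish on the faces of the cube and infers invariance from that. Your treatment is more careful, and rightly so: the remaining factor in $\dot{\mathbf r}$ is not even defined on $\partial[0,1]^3$ (e.g.\ $\tau_1=\alpha\tau/\varphi$ as $\varphi\to 0^+$) and may blow up there, so the bare ``the boundary is invariant, hence uniqueness forbids crossing'' argument is incomplete without the sign analysis and the Lyapunov confinement you supply via \eqref{eq:Sdot}. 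In effect you are filling a gap the paper glosses over; the extra work with the logit variables and the inward-pointing divergences buys a genuinely complete proof of the cube invariance, while the paper's version should be read as a heuristic.
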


\begin{proof}
  The multiplicative term $\alpha(1-\alpha)$ in the first equation of
  \eqref{eq:fraction-model} ensures that the right-hand side vanishes
  if $\alpha=0$ or $\alpha=1$. The same holds for the equations on
  the remaining fractions $\varphi$ and $\xi$. This proves
  \eqref{it:prop1}.
Item \eqref{it:prop2} is nothing but a reformulation of \eqref{eq:mass-fraction}.
Finally, the time derivative of the mixture entropy writes
  \begin{equation*}
    \dfrac{d}{dt}\mathscr{S}(\mathbf r(t))
    =
    \nabla \mathscr{S}(\mathbf r)\cdot  \dot{\mathbf r}(t).
  \end{equation*}
  Since $\tau>0$, $e>0$ and the fractions belong to $]0,1[$, it
  follows that the item
  \eqref{it:prop3} holds true.
\end{proof}

\subsection{Equilibria and attractivity}
\label{sec:equil-attr}

In the sequel, we let $\mathbb F(\mathbf r)=(\mathbb F^\alpha,\mathbb
F^\varphi, \mathbb F^\xi)$ be the right-hand side of
\eqref{eq:fraction-model}, such that
\begin{equation}
  \label{eq:RHS}
  \begin{cases}
        \dot{\alpha}(t) = \mathbb F^\alpha(\mathbf{r}),\\
        \dot{\varphi}(t) = \mathbb F^\varphi(\mathbf{r}),\\
        \dot{\xi}(t) = \mathbb F^\xi(\mathbf{r}).
  \end{cases}
\end{equation}

\begin{Prop}[Equilibrium states]
\label{prop:equilibrium_states}
The equilibrium states for the dynamical system
\eqref{eq:fraction-model} are 
\begin{enumerate}
\item\label{it:coexistence} saturation states: 
  either $\mathbf
  r^*=(\alpha^*,\varphi^*,\xi^*)$ or 
  $\mathbf{r}^\#= (1-\alpha^*,
  1-\varphi^*, 1-\xi^*)$, 
  with $\alpha^*\neq\varphi^*\neq\xi^*\in
  ]0,1[$, defined by~\eqref{eq:volumes_energies_fractions} such that 
  $\tau_i^*=\tau_i(\mathbf r^*)= \tau_i(\mathbf{r}^\#)$ 
  and $e_i^*=e_i(\mathbf r^*)=e_i(\mathbf{r}^\#)$,
  $i=1,2$, corresponding
  to the characterization~\eqref{eq:egalite} of
  Proposition~\ref{prop:opti_equi}-~\eqref{it:mix}.
\item\label{it:pureq} Identification of phases 1 and 2: $\overline{\mathbf r}=(\beta,\beta,\beta)$,
  $\beta\in]0,1[$ such that
  $\tau_1(\overline{\mathbf{r}})=\tau_2(\overline{\mathbf{r}})=\tau$ and 
  $e_1(\overline{\mathbf{r}})=e_2(\overline{\mathbf{r}})=e$.
\end{enumerate}
\end{Prop}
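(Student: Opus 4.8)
The plan is to characterize the equilibria as the zeros of the right-hand side $\mathbb{F}$ in the open cube, and then to translate these zeros back into conditions on the fractions. First I would invoke Proposition~\ref{prop:sys_dyn_prop}-\eqref{it:prop1}: any trajectory issued from $]0,1[^3$ stays in $]0,1[^3$, so it suffices to seek equilibria $\mathbf{r}=(\alpha,\varphi,\xi)$ with $\alpha,\varphi,\xi\in]0,1[$. There the prefactors $\alpha(1-\alpha)$, $\varphi(1-\varphi)$, $\xi(1-\xi)$ are strictly positive, and since $\tau>0$ and $e>0$, comparing \eqref{eq:fraction-model} with \eqref{eq:nabla_S} exhibits the gradient-flow structure $\mathbb{F}(\mathbf{r})=D(\mathbf{r})\,\nabla_\mathbf{r}\mathscr{S}(\mathbf{r})$, where $D(\mathbf{r})=\mathrm{diag}\big(\alpha(1-\alpha),\varphi(1-\varphi),\xi(1-\xi)\big)$ is positive definite on $]0,1[^3$. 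Hence $\mathbb{F}(\mathbf{r})=0$ if and only if $\nabla_\mathbf{r}\mathscr{S}(\mathbf{r})=0$, so the equilibria are exactly the critical points of the mixture entropy.

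Next I would set each bracketed factor in \eqref{eq:fraction-model} to zero, which yields
\begin{equation*}
\frac{p(\tau_1,e_1)}{T(\tau_1,e_1)}=\frac{p(\tau_2,e_2)}{T(\tau_2,e_2)},\quad
\frac{\mu(\tau_1,e_1)}{T(\tau_1,e_1)}=\frac{\mu(\tau_2,e_2)}{T(\tau_2,e_2)},\quad
\frac{1}{T(\tau_1,e_1)}=\frac{1}{T(\tau_2,e_2)},
\end{equation*}
where $\tau_i=\tau_i(\mathbf{r})$ and $e_i=e_i(\mathbf{r})$. Using the positivity of the temperature \eqref{eq:positivity-of-T}, the third equality gives $T(\tau_1,e_1)=T(\tau_2,e_2)$; substituting this into the first two then produces the equalities of pressures and of chemical potentials. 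Thus every interior equilibrium satisfies $T(\tau_1,e_1)=T(\tau_2,e_2)$, $p(\tau_1,e_1)=p(\tau_2,e_2)$ and $\mu(\tau_1,e_1)=\mu(\tau_2,e_2)$, which are precisely the conditions \eqref{eq:egalite} of Proposition~\ref{prop:opti_equi}.

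I would then split into two cases according to whether the phasic states coincide. If $(\tau_1,e_1)=(\tau_2,e_2)$, the segment constraint of Proposition~\ref{prop:sys_dyn_prop}-\eqref{it:prop2} forces $(\tau_1,e_1)=(\tau_2,e_2)=(\tau,e)$; inverting \eqref{eq:volumes_energies_fractions} gives $\alpha=\varphi$ (from $\tau_1=\alpha\tau/\varphi=\tau$) and $\xi=\varphi$ (from $e_1=\xi e/\varphi=e$), so $\mathbf{r}=\overline{\mathbf{r}}=(\beta,\beta,\beta)$ with $\beta\in]0,1[$, the identification state of item~\eqref{it:pureq}. Otherwise $(\tau_1,e_1)\neq(\tau_2,e_2)$ and the three equalities single out a saturation state: by Proposition~\ref{prop:opti_equi}-\eqref{it:mix} there is a unique pair $(M_1^*,M_2^*)$ satisfying \eqref{eq:egalite} with $(\tau,e)$ in the associated segment, whence $(\tau_i,e_i)=(\tau_i^*,e_i^*)$ and the fractions are fixed by \eqref{eq:mass-fraction} up to the relabelling $1\leftrightarrow 2$, giving exactly $\mathbf{r}^*$ and $\mathbf{r}^\#$ of item~\eqref{it:coexistence}. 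Moreover $\varphi\in]0,1[$ places $\tau$ strictly between $\tau_1^*$ and $\tau_2^*$ (and $e$ between $e_1^*$ and $e_2^*$), so $\tau\neq\tau_1^*$ and $e\neq e_1^*$, forcing $\alpha^*\neq\varphi^*$ and $\xi^*\neq\varphi^*$, which separates the saturation equilibria from the diagonal identification family.

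I expect the main obstacle to be the bookkeeping of this last step: confirming that the distinct-phase solutions of $T_1=T_2$, $p_1=p_2$, $\mu_1=\mu_2$ are genuinely the saturation states of Proposition~\ref{prop:opti_equi} and not some spurious pair sharing the same $(p,T,\mu)$ with a branch in the spinodal zone. This is exactly where the uniqueness argument of Proposition~\ref{prop:opti_equi}-\eqref{it:mix} (the bijection between $(p,T)$ and $(\tau_i,e_i)$ outside the spinodal zone) does the work, combined with a careful use of the segment constraint \eqref{eq:H**} to pin down the fractions and to keep the identification branch $\alpha=\varphi=\xi$ disjoint from the saturation branch.
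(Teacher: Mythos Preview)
Your argument is correct and follows the same route as the paper: identify the interior equilibria as the zeros of $\mathbb{F}$, use the positivity of the prefactors on $]0,1[^3$ to reduce to the vanishing of the three brackets, deduce $T_1=T_2$, $p_1=p_2$, $\mu_1=\mu_2$, and then split into the coincident-phase and distinct-phase cases, invoking Proposition~\ref{prop:opti_equi}-\eqref{it:mix} for the latter. The paper's own justification is in fact a short informal paragraph rather than a formal proof, so your write-up is more detailed; in particular, your explicit gradient-flow factorization $\mathbb{F}=D(\mathbf{r})\nabla_\mathbf{r}\mathscr{S}$ and your remark that $\alpha^*\neq\varphi^*\neq\xi^*$ follows from $\tau\neq\tau_1^*$, $e\neq e_1^*$ are clarifications the paper leaves implicit. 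Your final caveat about possible spurious pairs with a branch in the spinodal zone is well taken and is indeed the point where the paper, like you, leans on the uniqueness statement of Proposition~\ref{prop:opti_equi}-\eqref{it:mix} without resolving it further.
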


The equilibria of the dynamical system are
given by $\mathbb F(\mathbf r)=0$.
In the case of equilibria \eqref{it:coexistence}, consider that
$\alpha\neq\varphi\neq \xi$. Then, according to the
Proposition~\ref{prop:opti_equi}-~\eqref{it:mix},
there exists a unique triplet $r^*=(\alpha^*,\varphi^*,\xi^*)$ such
that the characterization~\eqref{eq:egalite} holds. 
It turns out that
$\mathbf{r}^\#$ is also an equilibrium of the system.
If the equilibrium
$r^* $ corresponds to $\tau_1^*<\tau<\tau_2^*$, $e_1^*<e<e_2^*$,
then the equilibrium $\mathbf{r}^\#$ corresponds to
$\tau_1^*>\tau>\tau_2^*$ and $e_1^*>e>e_2^*$ and conversely.
In the case of equilibria \eqref{it:pureq}, the two phases coincide, in
the sense that $\tau_1=\tau_2=\tau$ and $e_1=e_2=e$.
The determination of the constant $\beta\in  ]0,1[$ depends on the
initial data of the dynamical system \eqref{eq:fraction-model}.
We emphasize that the equilibrium states $\mathbf r
=(\beta,\beta,\beta)$ are valid for all states $(\tau,e)$ and go over
the van der Waals surface. 

To go further and identify the physically admissible equilibrium
states, we must investigate their stability and attractivity.

\begin{Prop}[Attractivity]
The equilibrium states are classified as follow:
\begin{itemize}
\item The saturation states $\mathbf{r}^*=(\alpha^*,\varphi^*,\xi^*)$
  and $\mathbf{r}^\#=(1-\alpha^*,1-\varphi^*,1-\xi^*)$
  are attractive points,
\item The equilibrium $\overline{\mathbf{r}} =
  (\beta,\beta,\beta)\in ]0,1[$, corresponding to the identification
  of the two phases, is strongly degenerate.
\end{itemize}
\end{Prop}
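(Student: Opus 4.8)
The system \eqref{eq:fraction-model} has a gradient-like structure that I would exploit throughout. Setting $D(\mathbf r)=\mathrm{diag}\big(\alpha(1-\alpha),\varphi(1-\varphi),\xi(1-\xi)\big)$, a termwise comparison of \eqref{eq:fraction-model} with the gradient \eqref{eq:nabla_S} shows that $\mathbb F(\mathbf r)=D(\mathbf r)\,\nabla_{\mathbf r}\mathscr S(\mathbf r)$, with $D(\mathbf r)$ positive definite on $]0,1[^3$. Differentiating this identity and using that $\nabla_{\mathbf r}\mathscr S$ vanishes at every equilibrium (Proposition~\ref{prop:equilibrium_states}), the Leibniz term carrying $\partial D$ drops out, so the Jacobian of $\mathbb F$ at an equilibrium $\mathbf r^{\dagger}$ reduces to
\[
  J(\mathbf r^{\dagger})=D(\mathbf r^{\dagger})\,H_{\mathscr S}(\mathbf r^{\dagger}),
\]
where $H_{\mathscr S}$ is the Hessian of $\mathscr S$ in the fraction variables. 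This single formula covers both families of equilibria.

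Next I would record the elementary spectral lemma: if $D\succ 0$ is diagonal and $H=H^{T}$, then $DH$ is similar to the symmetric matrix $D^{1/2}HD^{1/2}$ through the conjugation $D^{-1/2}(DH)D^{1/2}$, hence $DH$ has real eigenvalues with the same inertia as $H$. Consequently the stability of each equilibrium is governed entirely by the definiteness of $H_{\mathscr S}$ there. For the saturation state $\mathbf r^{*}$ the two phase points $M_1^{*},M_2^{*}$ are distinct and, lying on the saturation dome, belong to the concavity domain $D_c$, so $s$ is strictly concave at each of them. I would compute $H_{\mathscr S}(\mathbf r^{*})$ from \eqref{eq:volumes_energies_fractions}--\eqref{eq:Sr} and show it is negative definite, using this strict concavity together with the fact (Proposition~\ref{prop:opti_equi}, uniqueness of the bitangent) that $\mathbf r^{*}$ is an \emph{isolated} maximiser of the constrained entropy. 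Then all three eigenvalues of $J(\mathbf r^{*})$ are negative and $\mathbf r^{*}$ is asymptotically stable, i.e.\ attractive, by standard linearized stability; the swapped state $\mathbf r^{\#}$ follows by the symmetry of \eqref{eq:fraction-model} under relabelling of the two phases, which exchanges $\mathbf r^{*}$ and $\mathbf r^{\#}$. Equivalently one may use $\mathscr S$ itself as a strict Lyapunov function near $\mathbf r^{*}$, since $\tfrac{d}{dt}\mathscr S\ge 0$ with equality only at equilibria by Proposition~\ref{prop:sys_dyn_prop}.

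The identification equilibria $\overline{\mathbf r}=(\beta,\beta,\beta)$ are structurally different. Along the whole diagonal $\{(\beta,\beta,\beta):\beta\in\,]0,1[\}$ one has $\tau_1=\tau_2=\tau$ and $e_1=e_2=e$, so $\nabla_{\mathbf r}\mathscr S\equiv 0$: the diagonal is a one-parameter \emph{continuum} of equilibria and no such point is isolated. Computing the first-order variations of $\tau_1-\tau_2$ and $e_1-e_2$ yields the Hessian in closed form,
\[
  H_{\mathscr S}(\overline{\mathbf r})=\frac{1}{\beta(1-\beta)}\,A^{T}H_s(\tau,e)\,A,
  \qquad
  A=\begin{pmatrix}\tau & -\tau & 0\\ 0 & -e & e\end{pmatrix},
\]
where $H_s$ is the single-fluid Hessian \eqref{eq:hessian_s}. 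Since $\mathrm{rank}\,A=2$ with $\ker A=\mathrm{span}(1,1,1)$, the vector $(1,1,1)$ lies in $\ker H_{\mathscr S}(\overline{\mathbf r})$, hence in $\ker J(\overline{\mathbf r})$: the linearization always carries a zero eigenvalue, the center direction tangent to the continuum, so hyperbolic theory never applies and $\overline{\mathbf r}$ is strongly degenerate. The remaining inertia is read off from $H_s$ via the spectral lemma and Sylvester's law: on the spinodal zone $D_s\setminus D_c$, where $\det H_s<0$, one transverse eigenvalue is positive and one negative (a degenerate saddle, not attractive), whereas on $D_c$ both transverse eigenvalues are negative while the center direction persists.

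The main obstacle is the negative-definiteness of $H_{\mathscr S}(\mathbf r^{*})$ at a genuine saturation state: unlike the identification case it does not collapse to a clean $A^{T}H_sA$ factorisation, because the two phases are distinct, and one must rule out a neutral fraction direction, i.e.\ check nondegeneracy of the maximum in all three directions. I expect to settle this by writing $H_{\mathscr S}(\mathbf r^{*})$ in terms of the two separate Hessians $H_s(M_1^{*})$ and $H_s(M_2^{*})$, both negative definite on $D_c$, and by using the uniqueness of the bitangent segment from Proposition~\ref{prop:opti_equi} to exclude a zero eigenvalue.
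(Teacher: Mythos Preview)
Your approach differs substantially from the paper's, and in several respects is more informative. The paper does not exploit the factorisation $\mathbb F=D\,\nabla_{\mathbf r}\mathscr S$; for the identification equilibrium $\overline{\mathbf r}=(\beta,\beta,\beta)$ it simply writes out the Jacobian $D_{\mathbf r}\mathbb F(\overline{\mathbf r})$ entrywise in terms of the partial derivatives of $p/T$, $\mu/T$, $1/T$ (displayed as \eqref{eq:JF_rbar}), observes that its three columns are linearly dependent, and concludes that a zero eigenvalue is present. Your route via $H_{\mathscr S}(\overline{\mathbf r})=\tfrac{1}{\beta(1-\beta)}A^{T}H_s(\tau,e)A$ recovers the same zero eigenvalue with the explicit eigenvector $(1,1,1)$ tangent to the diagonal continuum of equilibria, and in addition reads off the signs of the two transverse eigenvalues from the inertia of $H_s(\tau,e)$---information the paper does not extract at this point.

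For the saturation state $\mathbf r^{*}$ you should know that the paper does \emph{not} give an analytical argument either: it states that the Jacobian entries ``do not simplify much and obtaining an explicit formulation of its eigenvalues is out of reach'', and then presents a numerical table of eigenvalues computed at five sample saturation states for the reduced van der Waals parameters \eqref{eq:vdw_param}, all of which turn out negative. So the obstacle you flag---proving $H_{\mathscr S}(\mathbf r^{*})$ negative definite---is exactly the step the paper bypasses with numerics. Your sketched strategy (assembling $H_{\mathscr S}(\mathbf r^{*})$ from the two negative-definite phase Hessians $H_s(M_1^{*})$, $H_s(M_2^{*})$ and invoking uniqueness of the bitangent to exclude a null direction) is a reasonable plan, but completing it would go beyond what the paper itself establishes.
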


\begin{proof}
In the sequel and for sake of
readability, we denote $p_i:=p(\tau_i(\mathbf{r}),e_i(\mathbf{r}))$,
$T_i:=T(\tau_i(\mathbf{r}),e_i(\mathbf{r}))$ and
$\mu_i:=\mu(\tau_i(\mathbf{r}),e_i(\mathbf{r}))$.
The goal now is to find the spectrum of the Jacobian  matrix  of $\mathbb{F}$ denoted by 
\begin{equation*}
  D_\mathbf{r} \mathbb F(\mathbf{r}):=
  \begin{pmatrix}
    \p_\alpha \mathbb F^\alpha(\mathbf{r}) & 
    \p_\varphi \mathbb F^\alpha(\mathbf{r}) & 
    \p_\xi \mathbb F^\alpha(\mathbf{r}) \\
    \p_\alpha \mathbb F^\varphi(\mathbf{r}) & 
    \p_\varphi \mathbb F^\varphi(\mathbf{r}) & 
    \p_\xi \mathbb F^\varphi(\mathbf{r}) \\
    \p_\alpha \mathbb F^\xi(\mathbf{r}) & 
    \p_\varphi \mathbb F^\xi(\mathbf{r}) & 
    \p_\xi \mathbb F^\xi(\mathbf{r}) 
  \end{pmatrix}.
\end{equation*}
First consider the equilibrium $
\overline{\mathbf{r}}=(\beta,\beta,\beta)\in ]0,1[^3$, which
corresponds to the identification of the two phases.
In that case, the Jacobian matrix $D_\mathbf{r} \mathbb F(\overline{\mathbf{
r}})$ reads
\begin{equation}
  \label{eq:JF_rbar}
  D_\mathbf{r} \mathbb F(\mathbf{\overline{r}})=
  \begin{pmatrix}
    \tau \,\partial_\tau ( \dfrac{p}{T})(\tau,e) & 
    -\tau \,\partial_\tau ( \dfrac{p}{T})(\tau,e)
    -e\, \partial_e ( \dfrac{p}{T})(\tau,e) & 
    e\, \partial_e (  \dfrac{p}{T})(\tau,e)\\
    -\tau \, \partial_\tau (  \dfrac{\mu}{T})(\tau,e) & 
    \tau \, \partial_\tau (  \dfrac{\mu}{T})(\tau,e)
    +e\, \partial_e( \dfrac{\mu}{T})(\tau,e) &
    - e\, \partial_e( \dfrac{\mu}{T})(\tau,e) \\
    \tau \, \partial_\tau (  \dfrac{1}{T})(\tau,e) &
    -\tau \, \partial_\tau (  \dfrac{1}{T})(\tau,e)-e\, \partial_e (
    \dfrac{1}{T})(\tau,e)& 
    e\, \partial_e (  \dfrac{1}{T})(\tau,e)
\end{pmatrix}.
\end{equation}
Since the middle column is the sum of the two remaining columns, then
the determinant of $D_\mathbf{r} \mathbb F(\mathbf{\overline{r}})$ is
zero and the Jacobian matrix admits a null eigenvalue. Hence
the equilibrium $\overline{\mathbf{r}}$ is a  strongly degenerate.

As the 
saturation equilibrium $\mathbf{r}^*=(\alpha^*,\varphi^*,\xi^*)$ is
concerned,
the coefficients of the Jacobian matrix $D_\mathbf{r} \mathbb
F(\mathbf{{r}}^*)$ do not simplify much and obtaining an explicit
formulation of its eigenvalues is out of reach.
So, we turn to the numerical illustration of the spectrum
$\{\lambda_1,\lambda_2,\lambda_3 \} $ 
of the matrix $D_\mathbf{r} \mathbb
F(\mathbf{{r}}^*)$ for some saturation states
$\mathbf{r}^*=(\alpha^*,\varphi^*,\xi^*)$ with the van der Waals EoS
with parameters \eqref{eq:vdw_param}.

$$\begin{tabular}{|c|c|c|c|c|c|c|c|c|c|c|c|}
\hline
$\mathbf{r}^*$& $(\tau,e)$ &$(\tau_1^*,e_1^*)$& $(\tau_2^*,e_2^*)$ &$\lambda_1$ &
$\lambda_2$ & $\lambda_3$   \\
\hline
(0.71, 0.29, 0.39) & (1.99, 2.1)&(4.76, 2.80)& (0.82, 1.80) & -8.443 & -1.290 &-0.061\\
\hline
(0.94, 0.73, 0.82) & (3.9, 2.49)&(5.13, 2.77) &(0.81, 1.73)& -5.713& 2.048& -0.055  \\
\hline
(0.76, 0.22, 0.35) & (2.39, 1.59)&(8.23, 2.56) & (0.73, 1.32) & -8.477 & -2.835 & -0.110  \\
\hline
(0.64,  0.14, 0.24) & (1.79, 1.49)&(8.25, 2.56) &  (0.73, 1.32)
 &-9.044& -2.405 & -0.097\\
\hline
(0.68, 0.25, 0.35) & (1.89, 1.99)&(5.11, 2.77) & (0.81, 1.73) & -8.660& -1.368 & -0.065 \\
\hline
\end{tabular}
$$
One observes numerically that, for these saturation equilibria
$\mathbf{r}^*$,  the Jacobian matrix $ D_{\mathbf{r}} \mathbb F
(\mathbf{r}^*)$ admits three negative eigenvalues, which means that
these equilibria are attractive. The same hold true for the
equilibrium $\mathbf{r}^\#$.
\end{proof}

To complete the study of equilibrium states, in particular to cope
with the degenerate state $\overline{\mathbf{r}}$, corresponding to
the identification of the two phases, we investigate
the attraction basins of $\overline{\mathbf{r}}$, $\mathbf{r}^*$ and $\mathbf{r}^\#$.
We introduce the following functions with index $I$ for Identification and
$S$ for Saturation:
\begin{equation}
  \label{eq:lyap}
  \begin{aligned}
    G_S(\mathbf{r}) &=  -\mathscr{S}(\mathbf{r}) + \text{conc}(s)(\tau,e),\\
    G_I(\mathbf{r}) &= -\mathscr{S}(\mathbf{r}) + s(\tau,e),
  \end{aligned}
\end{equation}
where $(\text{conc}s)(\tau,e)$ refers to the concave hull of the
function $s$, see the definition \eqref{eq:conv}.
\begin{Prop}
  \label{prop:lyap}
  The basins of attraction of the equilibrium states are the
  following:
  \begin{itemize}
  \item In the spinodal zone, with $(\tau,e)\in Z_{\text{Spinodal}}$,
    $G_S$ is a Lyapunov function on the whole domain
    $(\alpha,\varphi,\xi)\in ]0,1[^3$.
  \item In the liquid or vapor stable zones, with $(\tau,e)\in
    Z_{\text{Stable}}$, $G_I$ is a Lyapunov function of the whole
    domain $(\alpha,\varphi,\xi)\in ]0,1[^3$.
  \end{itemize}
\end{Prop}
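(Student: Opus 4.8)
The plan is to verify, on each zone, the three defining features of a Lyapunov function for the relevant equilibrium—$C^1$ regularity with non‑negativity, vanishing exactly at that equilibrium, and non‑increase along the trajectories of \eqref{eq:fraction-model}—and to reduce every one of these to the entropy‑growth property of Proposition~\ref{prop:sys_dyn_prop} together with the concave‑hull description of Section~\ref{sec:vdw}.

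First I would note that, for a fixed mixture state $(\tau,e)$, both $\text{conc}(s)(\tau,e)$ and $s(\tau,e)$ are constant with respect to $\mathbf r$, so that differentiating \eqref{eq:lyap} along a trajectory of \eqref{eq:fraction-model} yields
\begin{equation*}
  \frac{d}{dt}G_S(\mathbf r(t))=\frac{d}{dt}G_I(\mathbf r(t))=-\frac{d}{dt}\mathscr{S}(\mathbf r(t)).
\end{equation*}
Matching the components of $\nabla_\mathbf r\mathscr{S}$ in \eqref{eq:nabla_S} with those of the right‑hand side of \eqref{eq:fraction-model}, and writing $p_i,T_i,\mu_i$ for the phasic quantities evaluated at $(\tau_i(\mathbf r),e_i(\mathbf r))$, the entropy production is a sum of three nonnegative squares,
\begin{equation*}
  \frac{d}{dt}\mathscr{S}(\mathbf r)=\alpha(1-\alpha)\tau^2\Big(\frac{p_1}{T_1}-\frac{p_2}{T_2}\Big)^2+\varphi(1-\varphi)\Big(\frac{\mu_1}{T_1}-\frac{\mu_2}{T_2}\Big)^2+\xi(1-\xi)e^2\Big(\frac{1}{T_1}-\frac{1}{T_2}\Big)^2,
\end{equation*}
which is $\geq 0$ on $]0,1[^3$ and, since $\alpha,\varphi,\xi\in]0,1[$ and $\tau,e>0$, vanishes exactly when $\mathbb F(\mathbf r)=0$. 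Thus $\dot G_S\leq 0$ and $\dot G_I\leq 0$, with equality precisely on the equilibrium set.

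Next I would establish non‑negativity. Because $\mathscr{S}(\mathbf r)=\varphi s(\tau_1,e_1)+(1-\varphi)s(\tau_2,e_2)$ with $(\tau_i,e_i)$ as in \eqref{eq:volumes_energies_fractions}, the constraints \eqref{eq:cons_2} express $(\tau,e)$ as the $\varphi$‑convex combination of $(\tau_1,e_1)$ and $(\tau_2,e_2)$; hence $\mathscr{S}(\mathbf r)$ is one of the competitors in the supremum defining the concave hull \eqref{eq:conv} (with $I=2$ after Theorem~\ref{thm:I=3}), giving $\mathscr{S}(\mathbf r)\leq\text{conc}(s)(\tau,e)$ and therefore $G_S\geq 0$ on the whole cube. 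For $G_I$ I would invoke the fact, built into the construction of Section~\ref{sec:vdw}, that above the saturation dome no bitangent lowering occurs, so $\text{conc}(s)(\tau,e)=s(\tau,e)$ for $(\tau,e)\in Z_\text{Stable}$; there $G_I=G_S\geq 0$.

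Finally I would read off the zero sets from Proposition~\ref{prop:opti_equi}. On $Z_\text{Spinodal}$ one has $\text{conc}(s)(\tau,e)>s(\tau,e)$ and the maximizers of $\mathscr{S}$ are the saturation states, so $G_S$ vanishes exactly at $\mathbf r^*,\mathbf r^\#$ while $G_S(\overline{\mathbf r})=\text{conc}(s)(\tau,e)-s(\tau,e)>0$; together with $\dot G_S\leq 0$ this makes $G_S$ a global Lyapunov function for the saturation equilibria. On $Z_\text{Stable}$ the point $(\tau,e)$ is a point of strict concavity, so Jensen's inequality forces $\mathscr{S}(\mathbf r)=s(\tau,e)$ only at $\overline{\mathbf r}=(\beta,\beta,\beta)$, whence $G_I$ vanishes there and is a global Lyapunov function for the identification equilibrium. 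The main obstacle is this last geometric step rather than the differential calculus: one must rigorously pin down that $\text{conc}(s)=s$ on $Z_\text{Stable}$ while $\text{conc}(s)>s$ on $Z_\text{Spinodal}$, and match each maximizer of $\mathscr{S}$ to the correct equilibrium branch of Proposition~\ref{prop:opti_equi}. The LaSalle‑type conclusion that the attraction basin is the entire cube then follows because $\dot G=0$ holds exactly on the equilibrium set.
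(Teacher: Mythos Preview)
Your argument is correct and follows essentially the same route as the paper: derive $\dot G_S=\dot G_I=-\dot{\mathscr S}\le 0$ from the entropy-growth property, obtain $G_S\ge 0$ from $\mathscr S(\mathbf r)\le\text{conc}(s)(\tau,e)$, and use $\text{conc}(s)=s$ on $Z_{\text{Stable}}$ to get $G_I\ge 0$. Your version is in fact a bit more explicit than the paper's---you write out the entropy production as a sum of nonnegative squares and identify the zero set of $\dot G$, which the paper leaves implicit---and you correctly flag the one genuinely geometric input (that $\text{conc}(s)=s$ on $Z_{\text{Stable}}$ and $\text{conc}(s)>s$ on $Z_{\text{Spinodal}}$) as the step that requires care.
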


\begin{proof}
  The two functions are candidate to 
  be a Lyapunov function, since
  \begin{itemize}
  \item by construction $G_S(\mathbf{r}^*)=0$.
    Indeed, denoting
    $p^* = p(\tau_1^*,e_1^*)=p(\tau_1^*,e_2^*)$, 
    $T^* = T(\tau_1^*,e_1^*)=T(\tau_1^*,e_2^*)$, 
    and $\mu^* = \mu(\tau_1^*,e_1^*)=\mu(\tau_1^*,e_2^*)$,
    one has
    \begin{equation}
      \label{eq:GC_lyap1}
      \begin{aligned}
        G_S(\mathbf{r}^*) &= -\varphi^*s(\tau_1^*,e_1^*)
        -(1-\varphi^*)s(\tau_2^*,e_2^*)
        + p^*/T^* \tau +
        e/T^* + \mu^*/T^*\\
        & = p^*/T^* (\tau -\varphi^*\tau_1^* -(1-\varphi^*)\tau_2^*) +
        1/T^* (e-\varphi^* e_1^* - (1-\varphi^*) e_2^*) \\
        &\quad+ \mu^*/T^* (1-\varphi^*-(1-\varphi^*))\\
        &=0,
      \end{aligned}
    \end{equation}
    using the Gibbs relation \eqref{eq:gibbs-intensive}.
    The same holds for the equilibrium $\mathbf{r}^\#$.
    Similarly $G_I(\overline{\mathbf{r}})=0$;
  \item it holds $\nabla_\mathbf{r}
    G_S(\mathbf{r})=\nabla_\mathbf{r}
    G_I(\mathbf{r})=-\nabla_\mathbf{r}\mathscr{S}(\mathbf{r})$. Then we
    obtain as well $\nabla_\mathbf{r}
    G_S(\mathbf{r}^*)= \nabla_\mathbf{r}
    G_S(\mathbf{r}^\#)=\nabla_\mathbf{r}  G_S(\overline{\mathbf{r}})=0$,
    according to \eqref{eq:nabla_S};
  \item for the same reason, and using \eqref{eq:Sdot}, we have
    \begin{equation*}
      \dfrac{\text{d}}{\text{dt}}G_S(\mathbf{r}(t))=
      \dfrac{\text{d}}{\text{dt}}G_I(\mathbf{r}(t))
      \leq 0.
    \end{equation*}
\end{itemize}
It remains to check the positivity of $G_S$ and $G_I$ in a neighborhood of
$\mathbf{r}^*$, $\mathbf{r}^\#$ and $\overline{\mathbf{r}}$ respectively, depending on
the domain the state $(\tau,e)$ belongs to.

\noindent\textbf{Saturation} with $(\tau,e)\in Z_{\text{Spinodal}}$. By definition of
$\text{conc}(s)$, $\text{conc}(s)(\tau,e) > \mathscr S(\mathbf{r})$
for $\mathbf{r}\neq
\mathbf{r}^*$ (or equivalently $\mathbf{r}\neq
\mathbf{r}^\#$) . Hence $G_S(\mathbf{r})>0$.

\noindent\textbf{Stable states} with $(\tau,e)\in
Z_{\text{Stable}}$. We make use again of the concave hull of $s$
\begin{equation*}
  \begin{aligned}
    G_I(\mathbf{r}) &= -\varphi s\left(\alpha/\varphi \tau,\xi/\varphi
      e \right) - (1-\varphi) s \left( (1-\alpha)/(1-\varphi) \tau,
      (1-\xi)/(1-\varphi) e \right) - s(\tau,e)\\
    &\geq - \varphi \text{conc }(s)(\alpha/\varphi\tau,\xi/\varphi e)
    \\
    &\quad -
    (1-\varphi) \text{conc
    }(s)((1-\alpha)/(1-\varphi)\tau,(1-\xi)/(1-\varphi) e)\\
    &\quad -s(\tau,e)\\
    &\geq -  \text{conc }(s)(\alpha/\varphi\tau +
    (1-\alpha)/(1-\varphi)\tau,\xi/\varphi e+ (1-\xi)/(1-\varphi)
    e)-s(\tau,e)\\
    &= -  \text{conc }(s)(\tau,e)-s(\tau,e).
  \end{aligned}
\end{equation*}
In the liquid or vapor stable zones, $(\tau,e)$ belongs to the convex hull of the
graph of $s$, that is $\text{conc}(s)(\tau,e)=s(\tau,e)$.
Then $G_I(\mathbf{r}) \geq 0$ and the equality occurs if
$\mathbf{r}=\overline{\mathbf{r}}$.
\end{proof}
When considering the metastable regions with $(\tau,e)\in
Z_{\text{Metastable}}$, there are two basins of attraction, numerically
illustrated in Section \ref{sec:metast-zone}, see Figure
\ref{fig:meta_phase}.
Unlike in the spinodal zone, the function $G_I$ is non-negative in a
neighborhood of $(\tau,e)$, provided that $(\tau,e)$ belongs to a zone
of strict concavity of $s$. It means that both $\overline{\mathbf{r}}$
and $\mathbf{r}^*$ are reachable. The two basins of attraction are
separated by an unstable manifold, which is difficult to determine
theoretically and numerically as well.
It is already tough in the  isothermal framework, see \cite{james},
\cite{ghazi:tel-01976189} and\cite{ghazi19}.
In the latter reference  the determination of the basins of
the metastable states is more precise, even if it is not
explicit, the basins being defined through the application of the
implicit function theorem.

\subsection{Numerical illustrations}
\label{sec:num_dyn_sys}

This section provides numerical simulations to illustrate the
behavior of the dynamical system \eqref{eq:fraction-model} and the
attraction of each possible equilibrium states studied in Propositions
\ref{prop:equilibrium_states} and \ref{prop:lyap}.

The computations correspond to the reduced van der Waals EoS, with
parameters \eqref{eq:vdw_param}.
Cauchy problems for the system \eqref{eq:fraction-model} are solved
using a BDF method for stiff
problems available in the Python ODE-solver package.
The numerical results are computed
for a large computational time  $T_f = 200$s.
For each test case, the state $(\tau,e)$ of the total system
is picked either in the spinodal zone $Z_\text{Spinodal}$, in 
stable zones $Z_\text{Stable}$ 
or in a metastable
zones $Z_\text{Metastable}$, as depicted in Figure \ref{fig:vdw_tau_e}.
We provide the associated vector field in the $(\alpha,
\varphi,\xi)$ phase space and plot some trajectories in the phase
space starting from arbitrary initial state $\mathbf{r}(0)$ in order to
illustrate the attractivity of the equilibria.
Several complementary trajectories are represented in the planes $(\tau,e)$ and
$(\tau,p)$.

\subsubsection{Spinodal zone}
\label{sec:spinodal-zone}
The purpose is to illustrate the fact that, for any initial data
$\mathbf{r}(0)\in ]0,1[^3$, if the state $(\tau,e)$ belongs to the
spinodal zone $Z_\text{Spinodal}$, the corresponding attraction points are either
$\mathbf{r}^*$ or $\mathbf{r}^\#$,
that is the system achieves a saturation state 
of the saturation dome, see Proposition \ref{prop:lyap}.

We consider the state $(\tau,e)=(2,2.5)$ belonging to the spinodal zone.
The vector field of the dynamical system \eqref{eq:fraction-model} is
represented in Figure \ref{fig:spinodal_phase} by light blue
arrows. For some random initial conditions $\mathbf{r}(0)\in
]0,1[^3$ (representing by green or yellow dots), 
the corresponding trajectories converge either towards the
point $\mathbf{r}^*=(\alpha^*,\varphi^*,\xi^*)$ (green lines
converging towards the green
star) or towards
$\mathbf{r}^\#=(1-\alpha^*,1-\varphi^*,1-\xi^*)$ (yellow lines
converging towards the
yellow star). In both case, the asymptotic state corresponds to the
unique state $(\tau_i^*, e_i^*)$, $i=1,2$, defined by
\eqref{eq:egalite}, which belongs to the
saturation dome, see Proposition \ref{prop:opti_equi}-\eqref{it:mix}.
\begin{figure}[htpb]
  \centering
  \includegraphics[width=1.\linewidth]{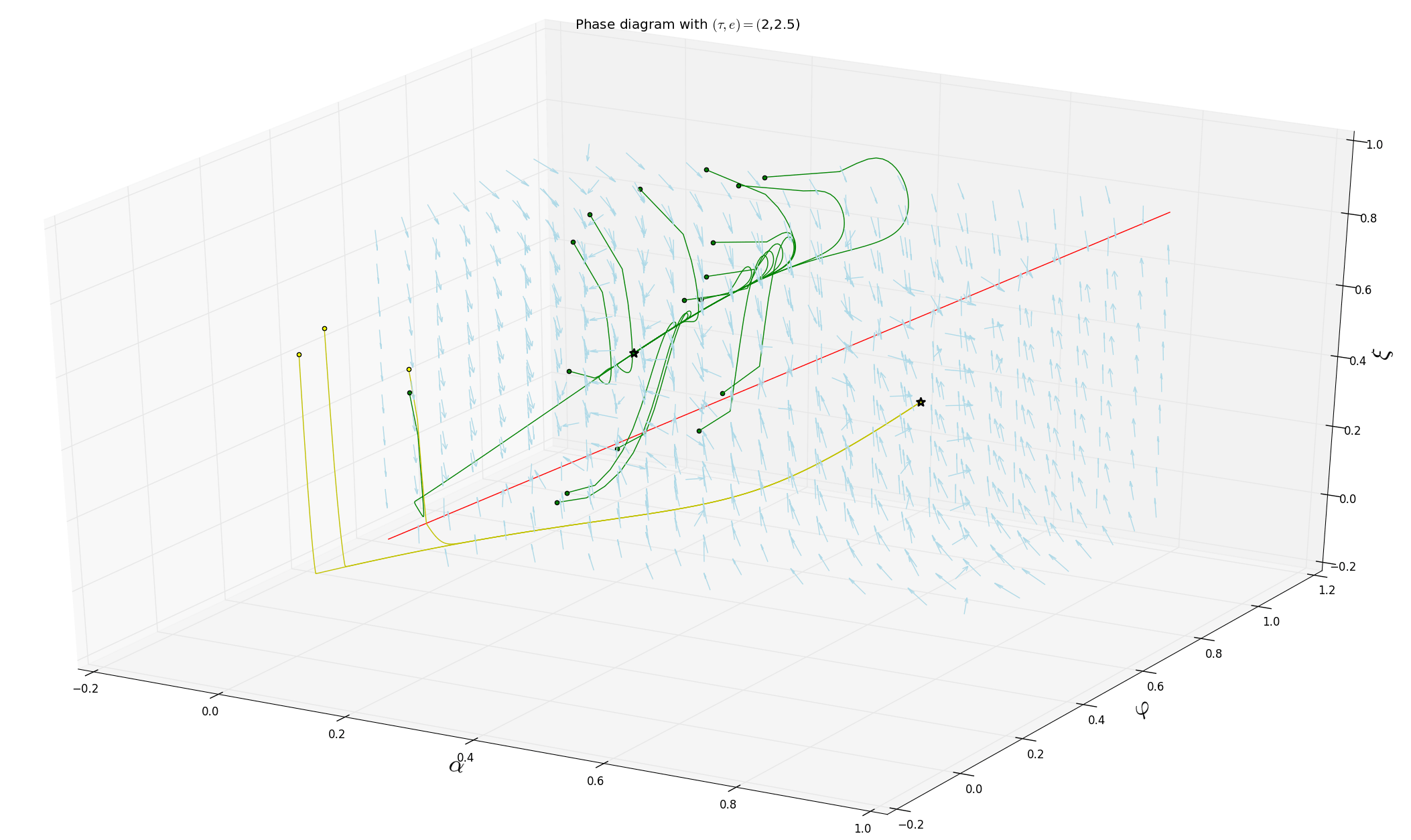}
  \caption{Spinodal zone: vector field of the dynamical system
    \eqref{eq:fraction-model} (light blue arrows). The red line
    corresponds to the line $\alpha=\varphi=\xi$.
    Depending on the
    initial condition $\mathbf{r}(0)$, the trajectories converge
    either towards the equilibrium
    $\mathbf{r}^*=(\alpha^*,\varphi^*,\xi^*)$ (green lines) or towards
    $\mathbf{r}^\#=(1-\alpha^*,1-\varphi^*,1-\xi^*)$ (yellow
    lines). In both case, the asymptotic regime corresponds to the
    state
    $(\tau_i^*, e_i^*)$, $i=1,2$, defined by
    \eqref{eq:egalite},
    belonging to the saturation dome.}
\label{fig:spinodal_phase}
\end{figure}

\begin{figure}[htpb]
  \centering
  \includegraphics[width=\linewidth]{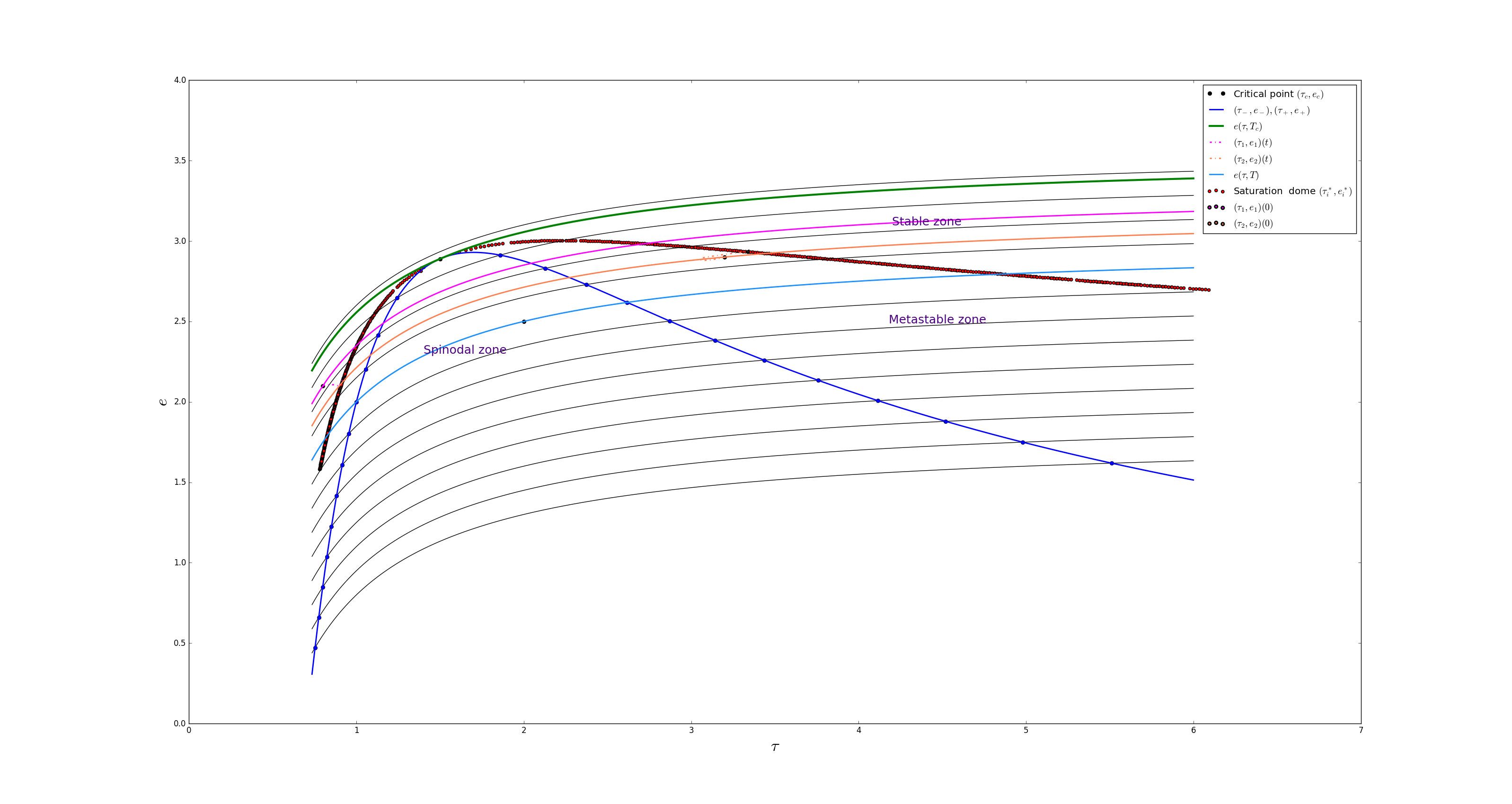}
  \includegraphics[width=0.8\linewidth]{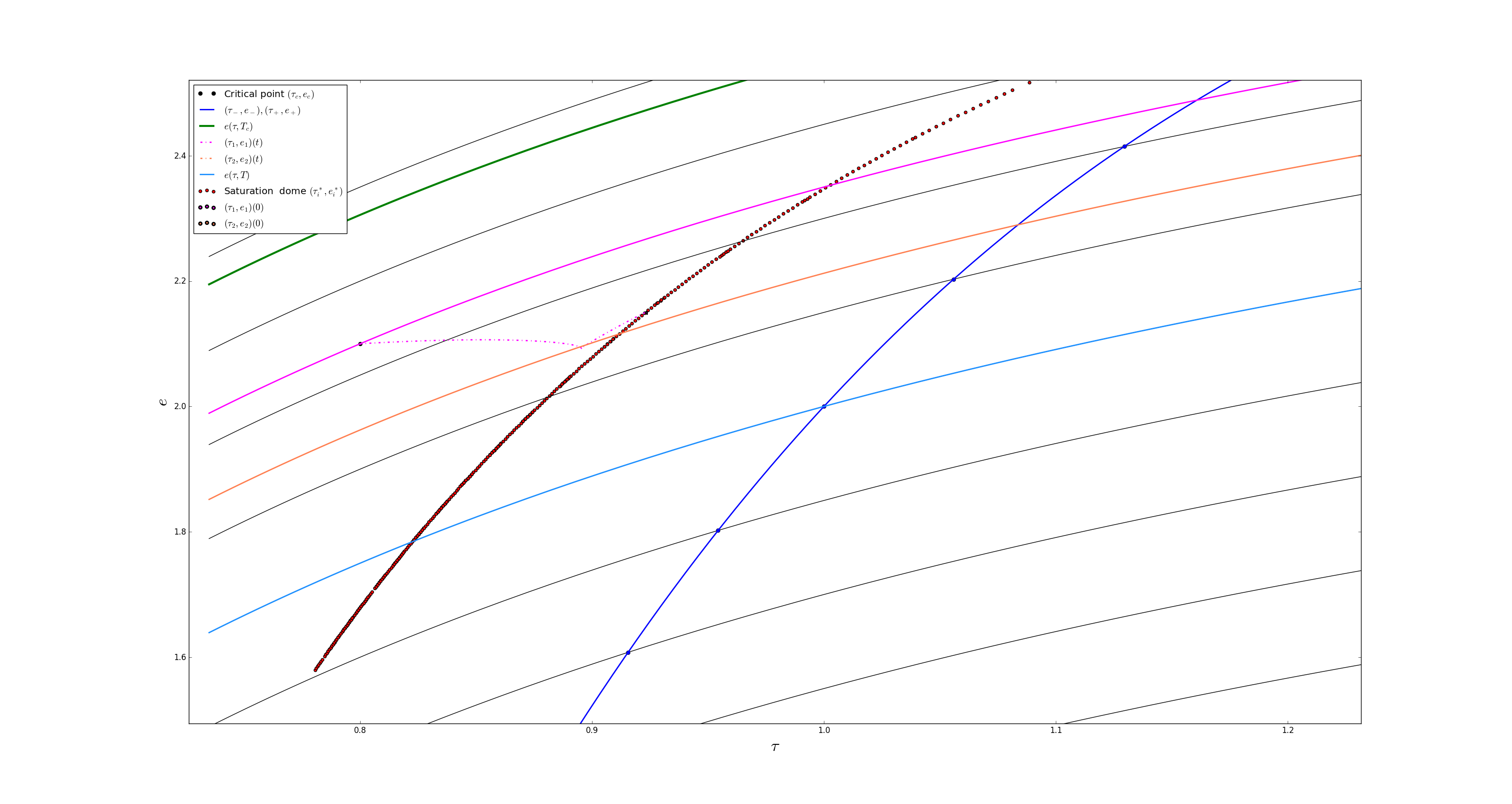}
  \includegraphics[width=0.8\linewidth]{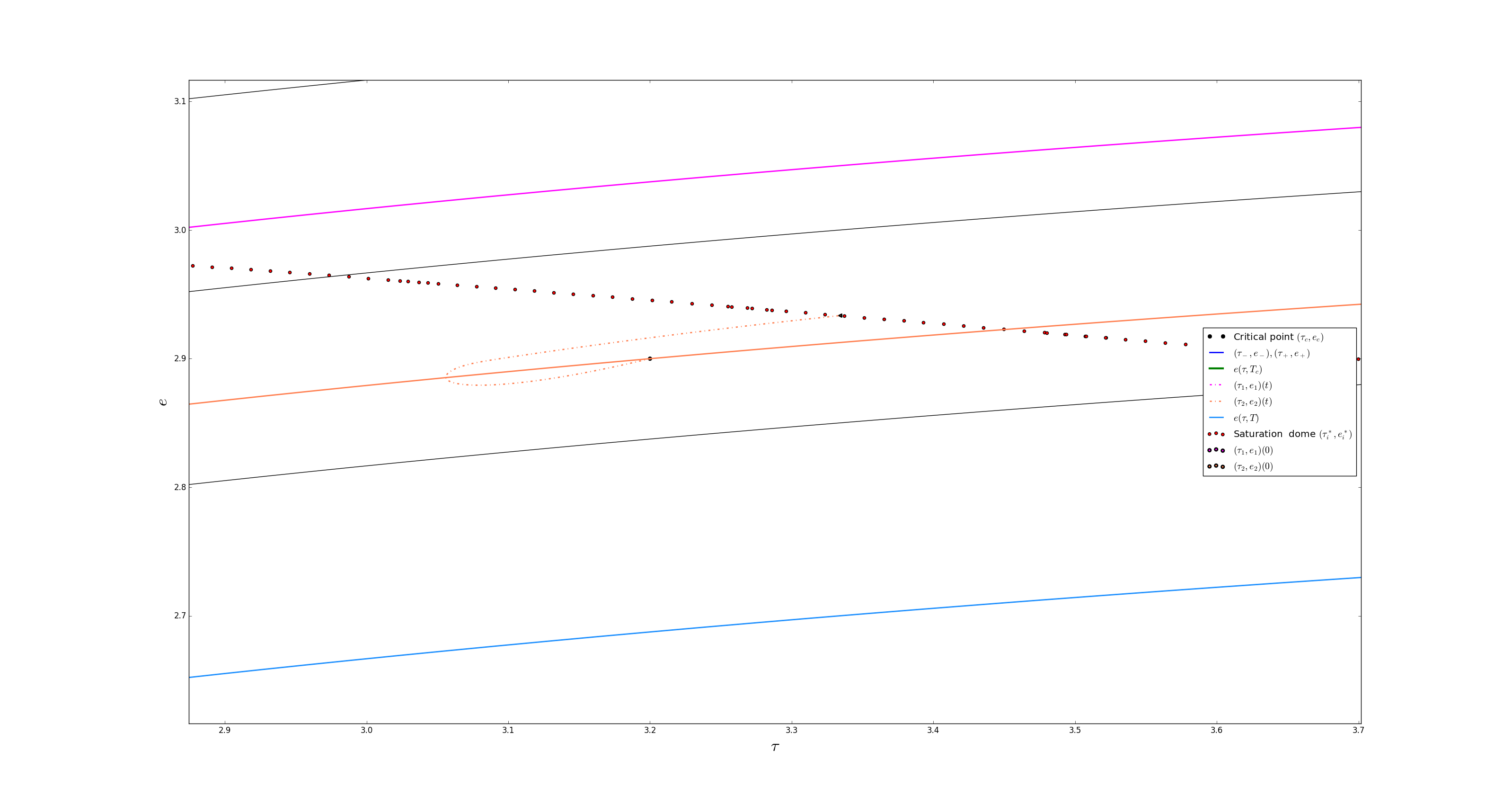}
  \caption{Spinodal zone, from top to bottom. Trajectories of the dynamical system
    \eqref{eq:fraction-model} in the $(\tau,e)$ plane. Starting from an
    initial state $(\tau_1(\mathbf{r}), e_1(\mathbf{r}))(0)$
    in the stable liquid region (on the magenta isothermal
    curve),
    the trajectory $(\tau_1(\mathbf{r}),
    e_1(\mathbf{r}))(t)$ is represented with a dashed magenta line and converges
    towards the saturation dome. The
    trajectory $(\tau_2(\mathbf{r}),
    e_2(\mathbf{r}))(t)$ is represented in orange.
    Middle and bottom figures: zoom of trajectories $(\tau_1(\mathbf{r}),
    e_1(\mathbf{r}))(t)$ and $(\tau_2(\mathbf{r}),
    e_2(\mathbf{r}))(t)$ respectively.}
  \label{fig:spinodal_etat1liqpur_tau_e}
\end{figure}
 
\begin{figure}[htpb]
  \centering
  \includegraphics[width=\linewidth]{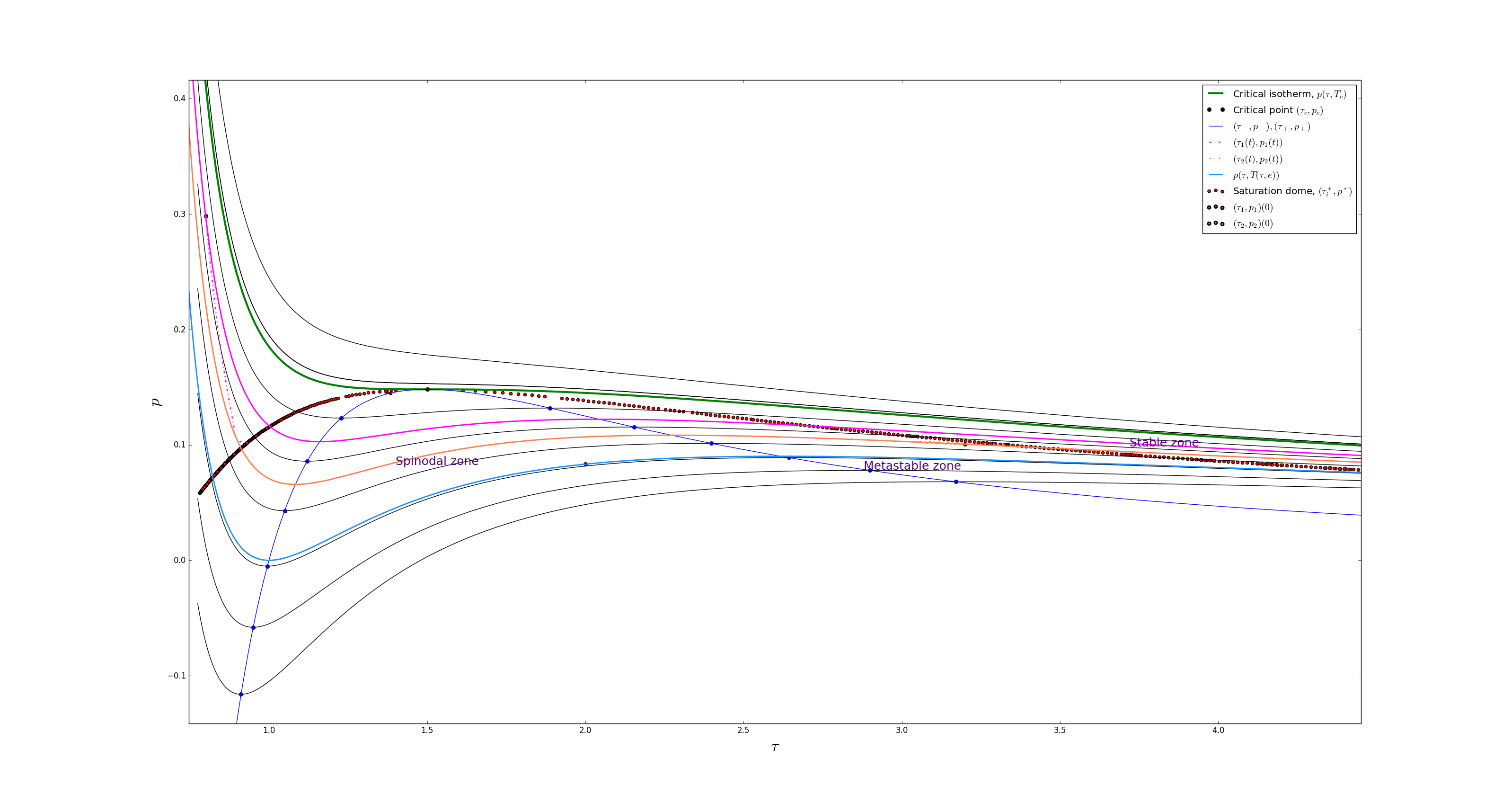}
  \includegraphics[width=0.8\linewidth]{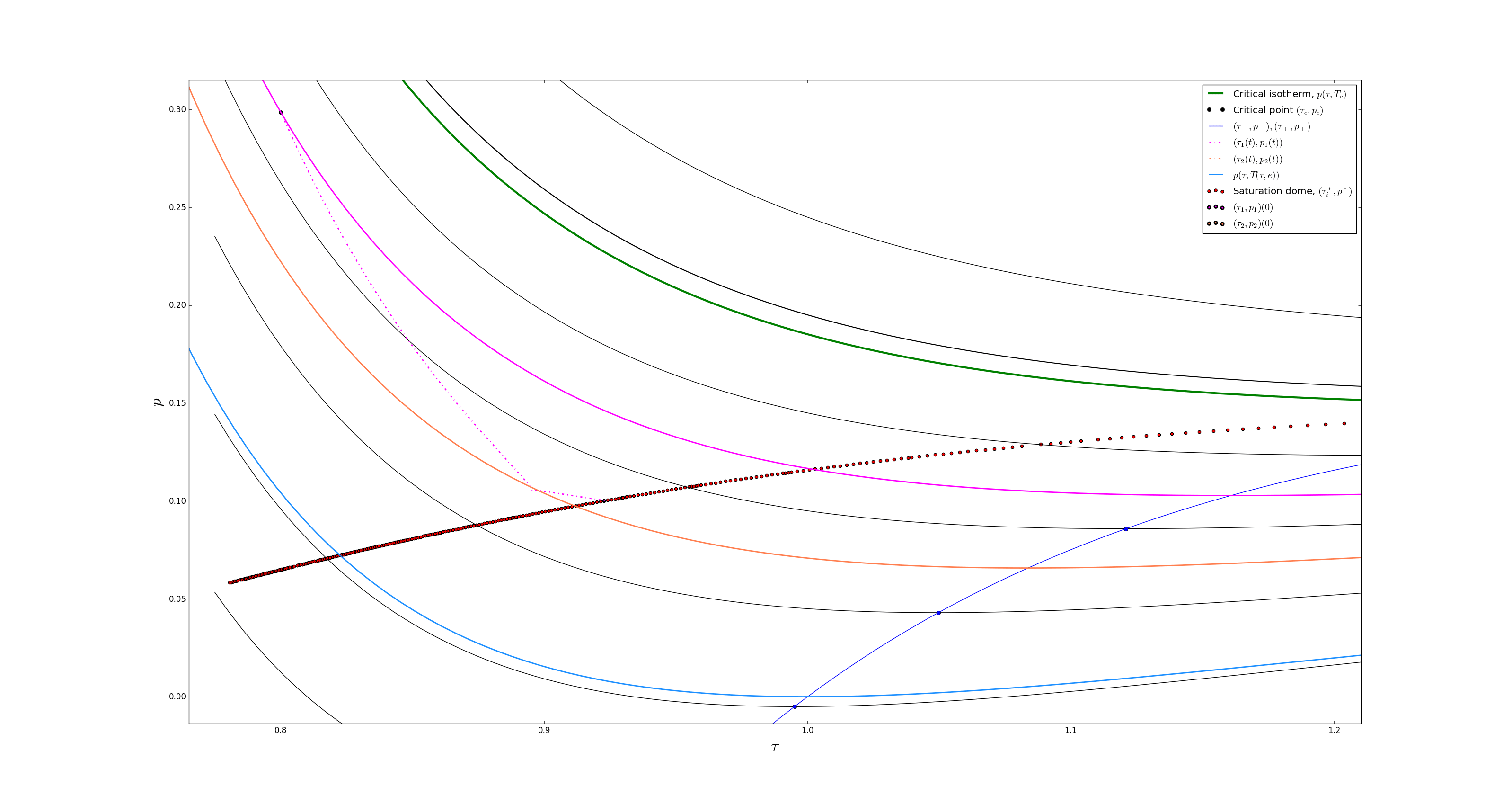}
  \includegraphics[width=0.8\linewidth]{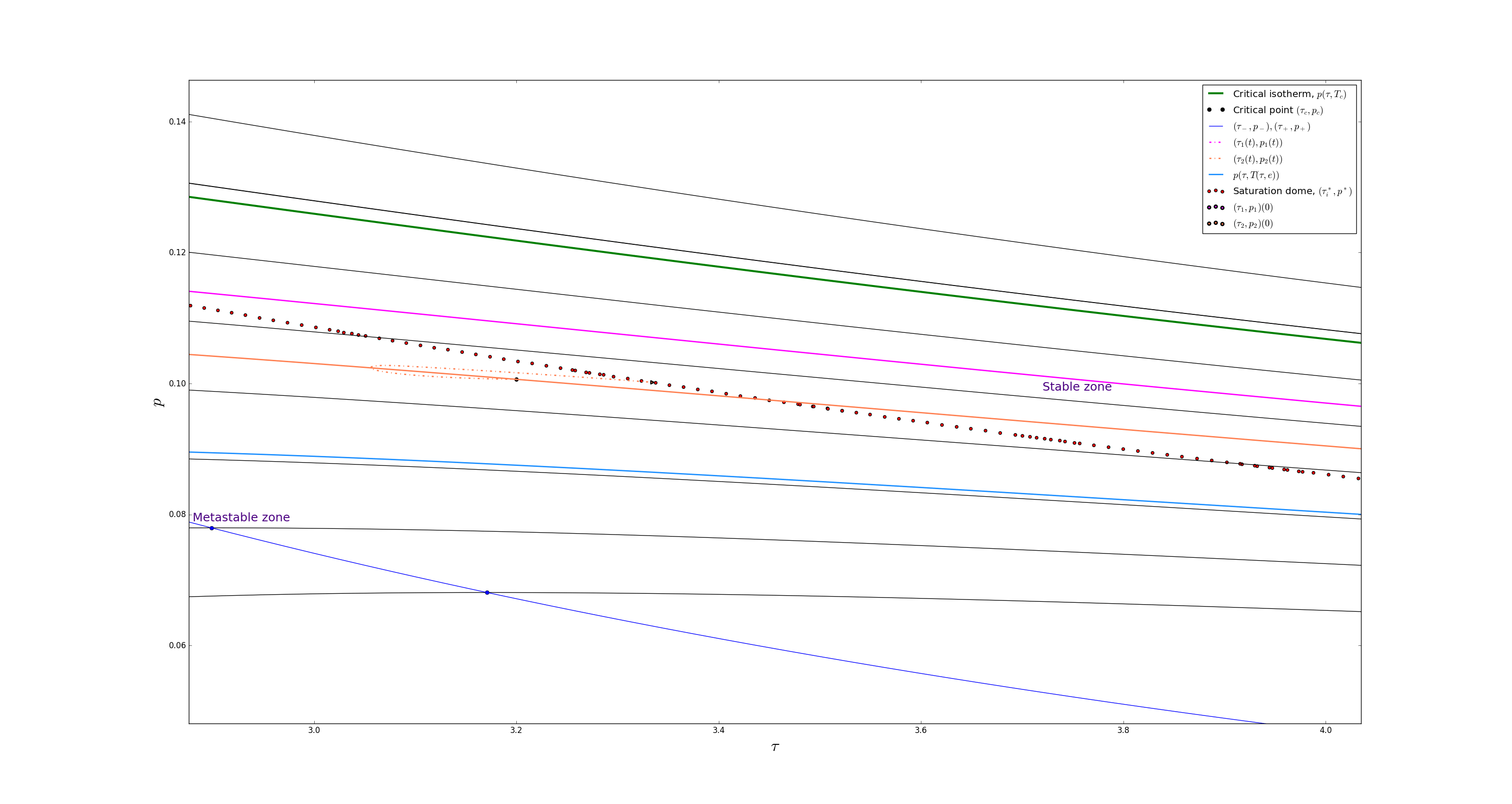}
  \caption{Spinodal zone, from top to bottom. Trajectories of the dynamical system
    \eqref{eq:fraction-model} in the $(\tau,p)$ plane. Starting from an
    initial state $(\tau_1(\mathbf{r}), p(\tau_1(\mathbf{r}),e_1(\mathbf{r})))(0)$
    in the liquid region (on the isothermal curve
    in magenta), the trajectory $(\tau_1(\mathbf{r}),
    p(\tau_1(\mathbf{r}),e_1(\mathbf{r})))(t)$ 
    is represented with a dashed magenta line and converges
    towards the saturation dome. The
    trajectory $(\tau_2(\mathbf{r}),
    p(\tau_2(\mathbf{r}),e_2(\mathbf{r})))(t)$  is represented in orange.
    Middle and bottom figures: zoom of trajectories $(\tau_1(\mathbf{r}),
    p(\tau_1(\mathbf{r}),e_1(\mathbf{r})))(t)$  and $(\tau_2(\mathbf{r}),
    p(\tau_2(\mathbf{r}),e_2(\mathbf{r})))(t)$ respectively.}
  \label{fig:spinodal_etat1liqpur_tau_p}
\end{figure}

In Figures \ref{fig:spinodal_etat1liqpur_tau_e} and   \ref{fig:spinodal_etat1liqpur_tau_p}
are plotted trajectories corresponding to the initial condition 
\begin{equation}
  \label{eq:spinodal_IC}
  \mathbf{r}(0) = (0.2,0.5,0.42),
\end{equation}
which corresponds to a state $(\tau_1(\mathbf{r}),e_1(\mathbf{r}))(0)  =
(0.8,2.1)$ belonging to the stable liquid zone with
$p_1(0)=0.2986$, $T_1(0)=1.1166$ and $\mu_1(0)=2.561$, and a state 
$(\tau_2(\mathbf{r}),e_2(\mathbf{r}))(0)  =(3.2,2.9)$ belonging to a
metastable vapor state with
$p_2(0)=0.1006$, $T_2(0)=1.0708$ and $\mu_2(0)=2.2736$.
Focusing on Figure \ref{fig:spinodal_etat1liqpur_tau_e}-top, the trajectory 
$(\tau_1(\mathbf{r}),e_1(\mathbf{r}))(t)$ is represented with a dashed
magenta line.
One observes that the trajectory starts
from the magenta subcritical isothermal curve, goes through the 
stable liquid zone and converges towards
a point of the saturation dome, see Figure
\ref{fig:spinodal_etat1liqpur_tau_e}-middle for a zoom of the
trajectory.
The trajectory 
$(\tau_2(\mathbf{r}),e_2(\mathbf{r}))(t)$ (dashed orange line) is
similar, except that it remains in the metastable vapor zone before
converging towards a point of the saturation dome. 
The saturation asymptotic state is characterized by the fractions
\begin{equation}
  \label{eq:spinodal_Tf}
    \mathbf{r}(T_f) =(0.255, 0.55, 0.47),
\end{equation}
with $(\tau_1,e_1)(T_f)=(0.923,2.15)$, $(\tau_2,e_2)(T_f)=(3.33,2.93)$
and $p_1(T_f)=p_2(T_f)=0.1$, $T_1(T_f)=T_2(T_f)=1.077)$, $\mu_1(T_f)=\mu_2(T_f)=2.284$.
See
Figure~\ref{fig:spinodal_etat1liqpur_tau_e}-bottom for a zoom of the trajectory.
Figures \ref{fig:spinodal_etat1liqpur_tau_p} represent the complementary
trajectories plotted in the $(\tau,p)$ plane.

\subsubsection{Stable phase zone}
\label{sec:pure_stable_zone}

The purpose is to illustrate the attraction of the line
$\alpha=\varphi=\xi$ 
for any initial data $\mathbf{r}(0)\in ]0,1[^3$,
as soon as the state $(\tau,e)$ belongs to a
stable phase zone. The corresponding equilibrium is then the
equilibrium
$(\tau_1(\overline{\mathbf{r}}),e_1(\overline{\mathbf{r}}))=
(\tau_2(\overline{\mathbf{r}}),e_2(\overline{\mathbf{r}}))=
(\tau,e)$, see Proposition \ref{prop:lyap}.

We consider a state $(\tau,e)=(3,3.1)$ belonging to the stable vapor zone.
The vector field of the dynamical system \eqref{eq:fraction-model} is
represented in Figure \ref{fig:pure_phase} by light blue
arrows. For some random initial conditions $\mathbf{r}(0)\in
]0,1[^3$ (represented by green dots), 
the corresponding trajectories (green lines) converge towards points belonging to the
line $\alpha=\varphi=\xi$ plotted in red. Then the asymptotic states
are such that $(\tau_1(\overline{\mathbf{r}}),e_1(\overline{\mathbf{r}}))=
(\tau_2(\overline{\mathbf{r}}),e_2(\overline{\mathbf{r}}))=
(\tau,e)$.

\begin{figure}[htpb]
  \centering
  \includegraphics[width=\linewidth]{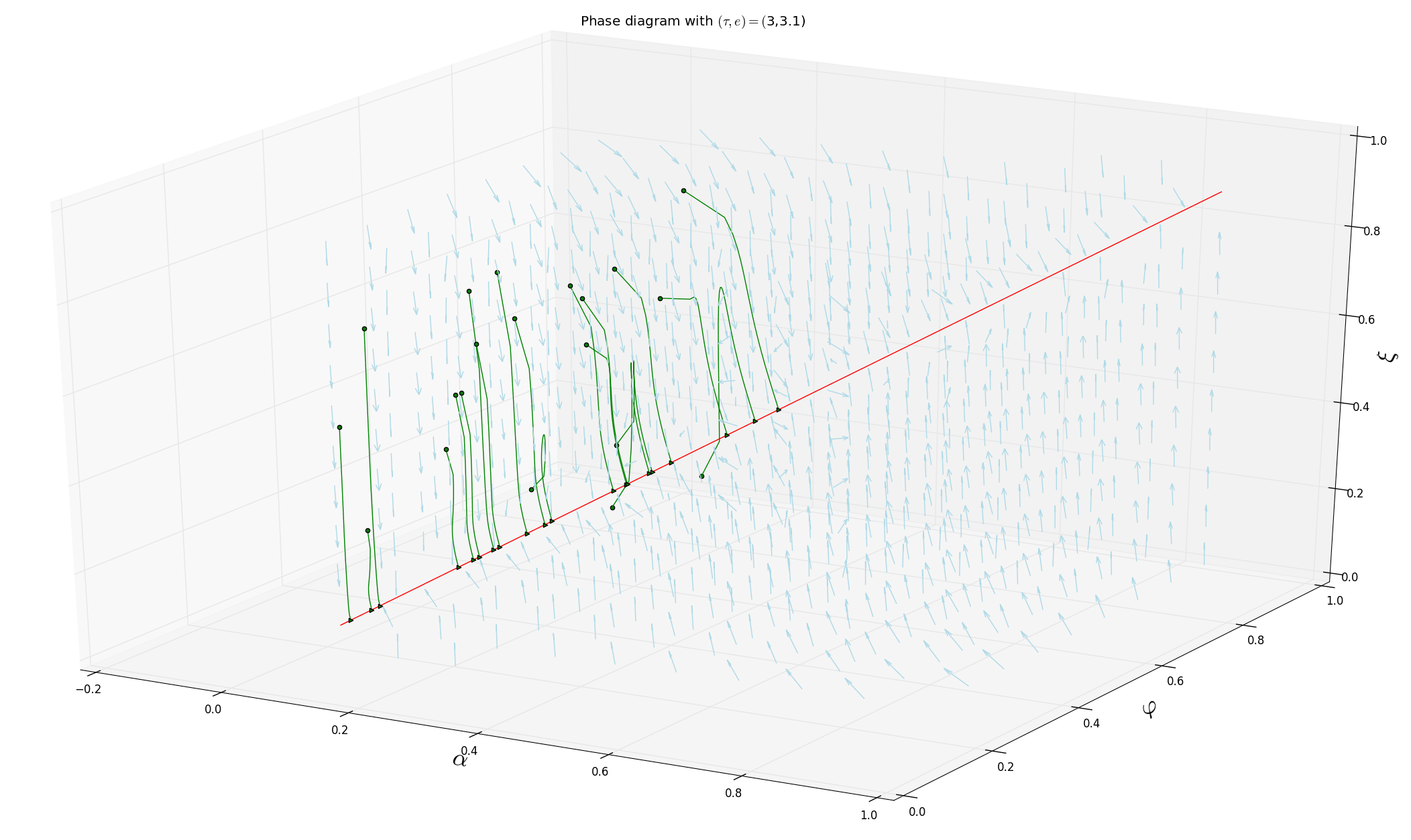}
  \caption{Stable phase zone: vector field of the dynamical system
    \eqref{eq:fraction-model} (light blue arrows). The red line
    corresponds to the line $\alpha=\varphi=\xi$.
    For any 
    initial condition $\mathbf{r}(0)$, the trajectories converge
    towards a point belonging to the line $\alpha=\varphi=\xi$,
    corresponding to  the state $(\tau,e)$.}.
\label{fig:pure_phase}
\end{figure}

\begin{figure}[htpb]
  \centering
  \includegraphics[width=\linewidth]{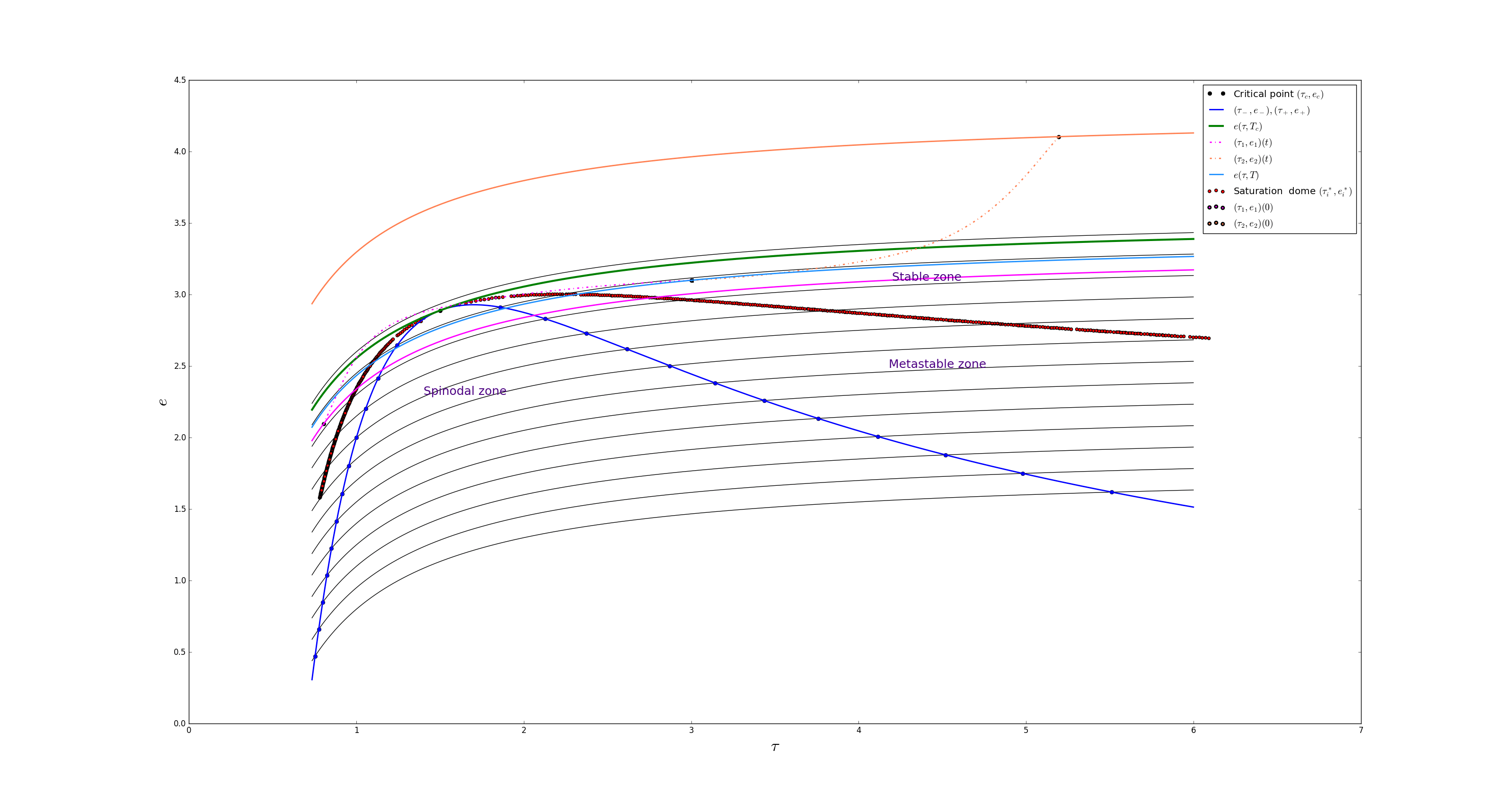}
 \caption{Stable phase zone. Trajectories of the dynamical system
    \eqref{eq:fraction-model} in the $(\tau,e)$ plane. Starting from an
    initial state $(\tau_1(\mathbf{r}), e_1(\mathbf{r}))(0)$
    in the stable liquid region (on the isothermal curve
    in magenta), the trajectory $(\tau_1(\mathbf{r}),
    e_1(\mathbf{r}))(t)$ is represented with a dashed magenta line and converges
    towards the state $(\tau,e)$. The
    trajectory $(\tau_2(\mathbf{r}(t)),
    e_2(\mathbf{r}))(t)$ is represented in orange.}
  \label{fig:pure_tau_e}
\end{figure}
 
\begin{figure}[htpb]
  \centering
  \includegraphics[width=\linewidth]{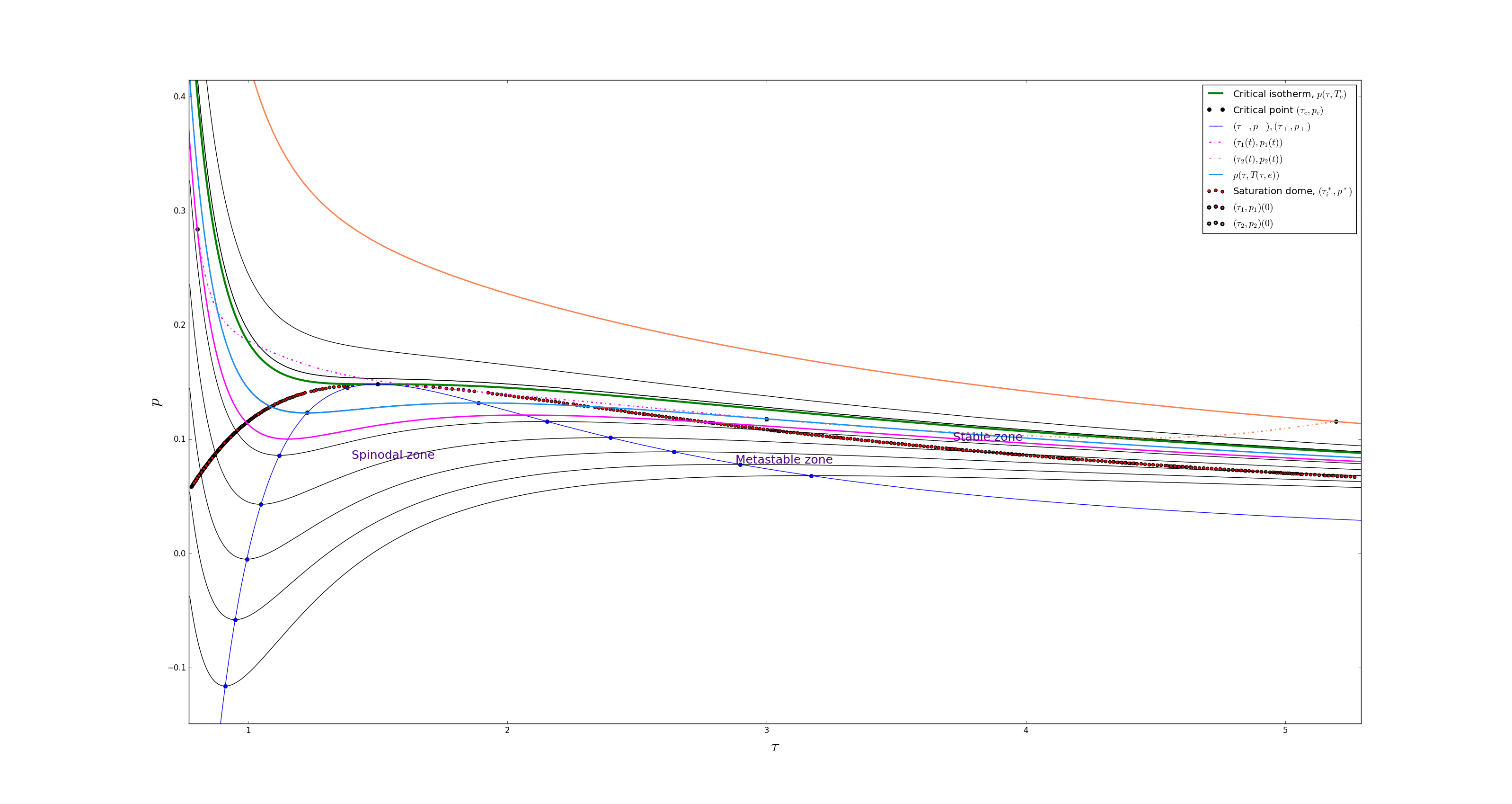}
  \includegraphics[width=0.8\linewidth]{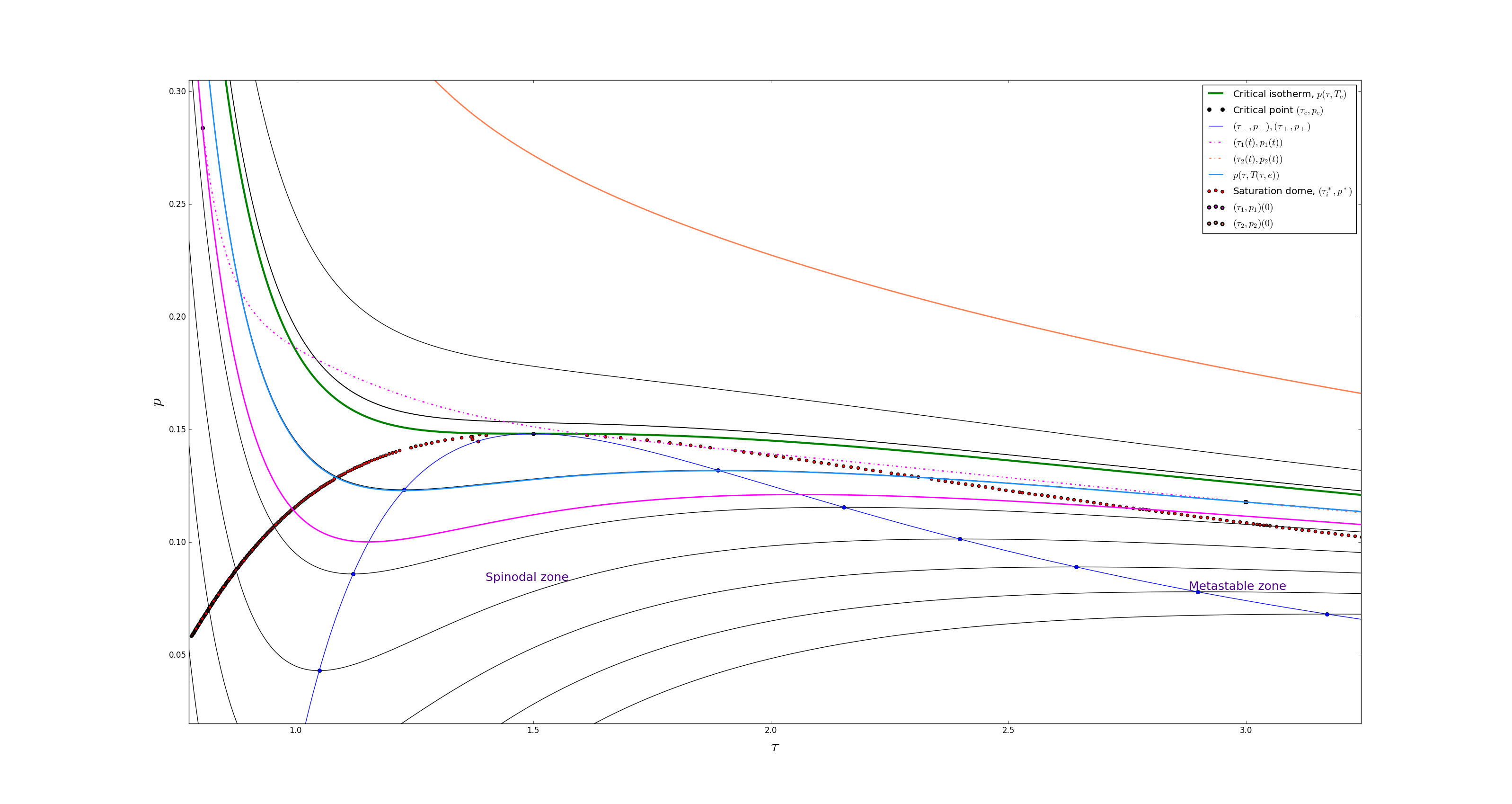}
  \includegraphics[width=0.8\linewidth]{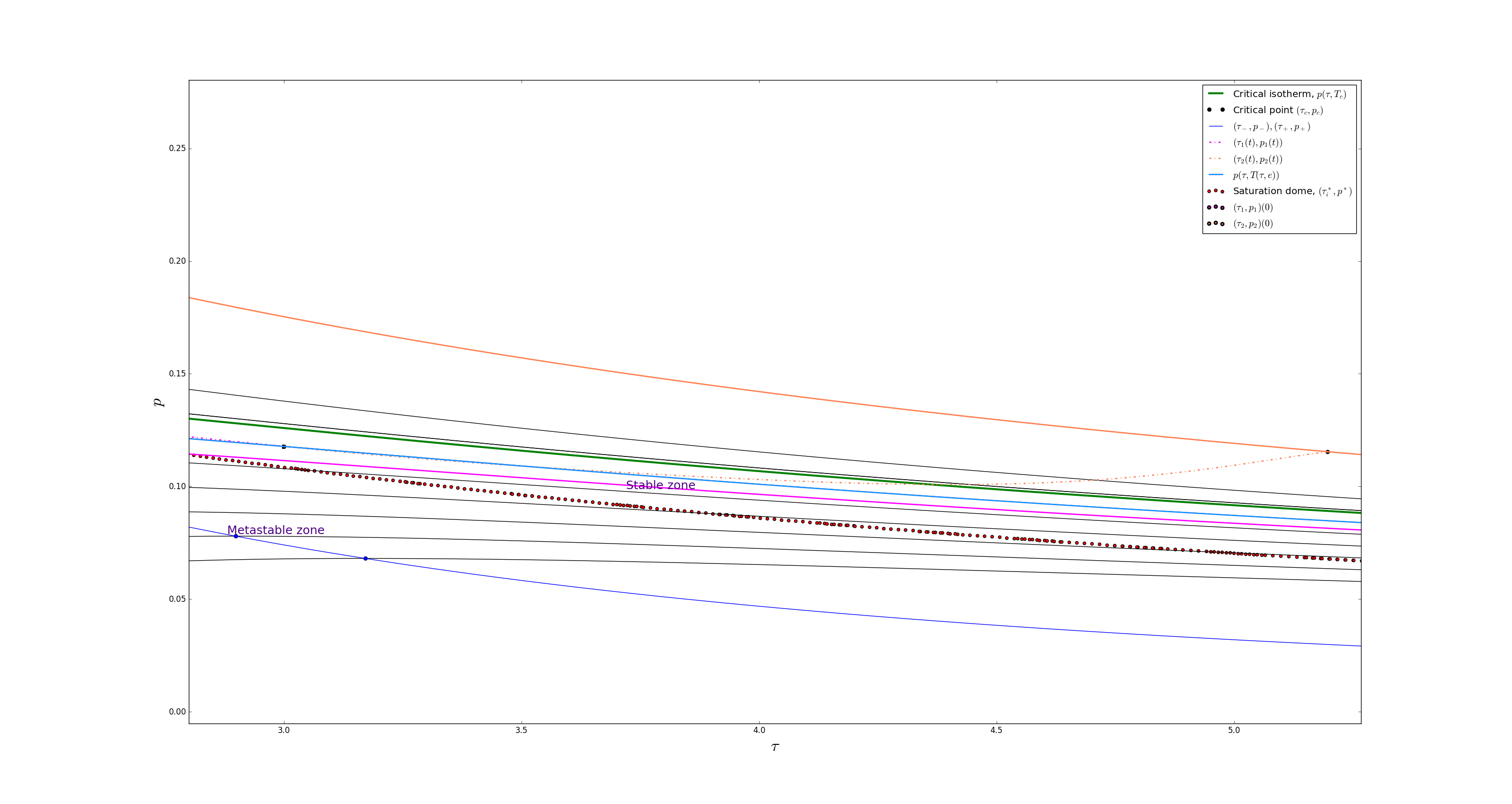}
  \caption{Stable phase zone, from top to bottom. Trajectories of the dynamical system
    \eqref{eq:fraction-model} in the $(\tau,p)$ plane. Starting from an
    initial state $(\tau_1(\mathbf{r}), p(\tau_1(\mathbf{r}),e_1(\mathbf{r})))(0)$
    in the stable liquid region (on the isothermal curve
    in magenta), the trajectory $(\tau_1(\mathbf{r}),
    p(\tau_1(\mathbf{r}),e_1(\mathbf{r})))(t)$ is represented with a
    dashed magenta line and converges
    towards the point $(\tau,e)$. The
    trajectory $(\tau_2(\mathbf{r}),
    p(\tau_2(\mathbf{r}),e_2(\mathbf{r})))(t)$ is represented in orange.
    Middle and bottom figures: zoom of trajectories $(\tau_1(\mathbf{r}),
    p(\tau_1(\mathbf{r}),e_1(\mathbf{r})))(t)$ and $(\tau_2(\mathbf{r}),
    p(\tau_2(\mathbf{r}),e_2(\mathbf{r})))(t)$ respectively.}
  \label{fig:pure_tau_p}
\end{figure}

In Figures \ref{fig:pure_tau_e} and  \ref{fig:pure_tau_p}
are plotted trajectories with the initial condition 
\begin{equation}
  \label{eq:pure_IC}
  \mathbf{r}(0) = (0.134,0.5,0.338),
\end{equation}
which corresponds to a state $(\tau_1(\mathbf{r}),e_1(\mathbf{r}))(0)  =
(0.8,2.1)$ belonging to the stable liquid zone, and a state 
$(\tau_2(\mathbf{r}),e_2(\mathbf{r}))(0)  =(5.196,4.1044)$
corresponding to a supercritical state.
Focusing on Figure \ref{fig:pure_tau_e}, the trajectory 
$(\tau_1(\mathbf{r}),e_1(\mathbf{r}))(t)$ is represented with a dashed
magenta line.
One observes that it starts
from the magenta subcritical isothermal curve, goes over the critical
point entering the supercritical zone, and finally converges towards
the point $(\tau,e)$. The trajectory 
$(\tau_2(\mathbf{r}),e_2(\mathbf{r}))(t)$ (dashed orange line) is
similar, going from the supercritical zone to the stable vapor zone
and finally
converging towards the point $(\tau,e)$.
Figures \ref{fig:spinodal_etat1liqpur_tau_p} represent the same
trajectories plotted in the $(\tau,p)$ plane.
One observes that the trajectory of
$(\tau_1(\mathbf{r}),p(\tau_1(\mathbf{r}),e_1(\mathbf{r})))(t)$ starts from the stable
liquid zone, crosses the critical isothermal curve twice before
converging towards the point $(\tau,p(\tau,e))$.

\subsubsection{Metastable zone}
\label{sec:metast-zone}
The purpose is to illustrate the fact that, if the state $(\tau,e)$ belongs to a
metastable zone, for any initial data
$\mathbf{r}(0)\in ]0,1[^3$, there exist two possible attraction
points.

We consider a state $(\tau,e)=(3.2,2.5)$ belonging to the metastable
vapor zone
with $p(\tau,e)=0.0759$ and $T(\tau,e)=0.9375$.
The vector field of the dynamical system \eqref{eq:fraction-model} is
represented in Figure \ref{fig:meta_phase} by light blue
arrows. For some random initial conditions $\mathbf{r}(0)\in
]0,1[^3$ (represented by green or yellow dots), 
the complementary trajectories (green or yellow lines) converge
towards 
\begin{itemize}
\item either an attraction point which lies on to the line
  $\alpha=\varphi=\xi$ (yellow trajectories). In that case the
  asymptotic state satisfies
  \begin{equation*}
    (\tau_1(\overline{\mathbf{r}}),e_1(\overline{\mathbf{r}}))=
    (\tau_2(\overline{\mathbf{r}}),e_2(\overline{\mathbf{r}}))=
    (\tau,e),
  \end{equation*}
  and remains 
  metastable, see Propositions \ref{prop:opti_equi}-\eqref{it:pure} and
  \ref{prop:equilibrium_states}-\eqref{it:coexistence};
\item either the attraction point
  $\mathbf{r}^*=(\alpha^*,\varphi^*,\xi^*)$ (green trajectories) which
  concurs with the
  unique state $(\tau_i^*, e_i^*)$, $i=1,2$, defined by
  \eqref{eq:egalite}, which belongs to the
  saturation dome, see Propositions \ref{prop:opti_equi}-\eqref{it:mix} and
  \ref{prop:equilibrium_states}-\eqref{it:pureq}.
\end{itemize}

\begin{figure}[htpb]
  \centering
  \includegraphics[width=\linewidth]{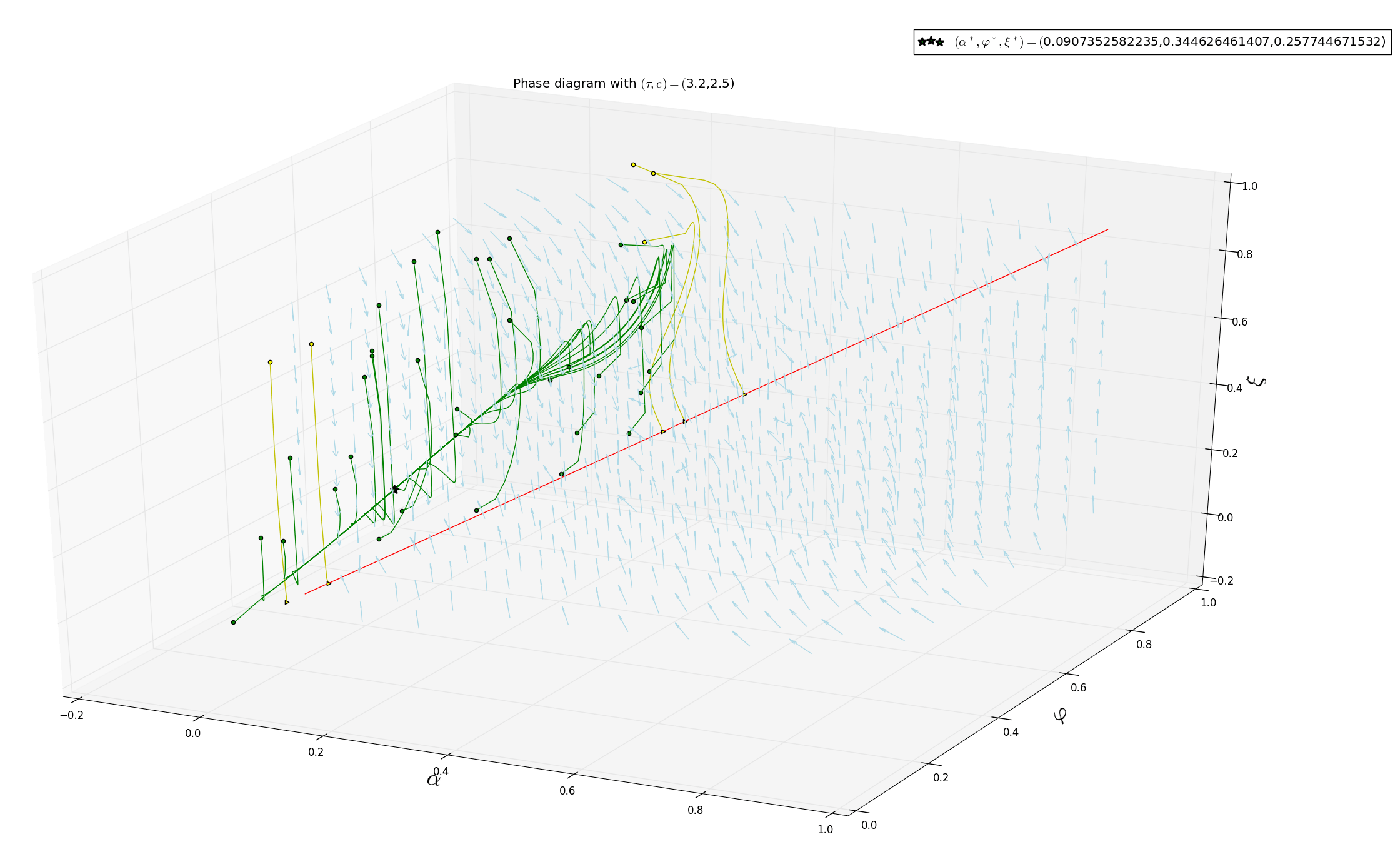}
  \caption{Metastable zone: vector field of the dynamical system
    \eqref{eq:fraction-model} (light blue arrows). The red line
    corresponds to the line $\alpha=\varphi=\xi$.
    For any 
    initial condition $\mathbf{r}(0)$, the trajectories converge
    either towards a point belonging to the line $\alpha=\varphi=\xi$,
    corresponding to the state $(\tau,e)$ (yellow trajectories), or to the
    point $\mathbf{r}^*=(\alpha^*,\varphi^*,\xi^*)$, which concurs with a
    state belonging to the saturation dome (green trajectories).}
\label{fig:meta_phase}
\end{figure}

\textit{Metastable state and perturbation within the phase}.
In Figures \ref{fig:meta_pure_tau_e} and \ref{fig:meta_pure_tau_p}
the represented trajectories correspond to a realization of the
dynamical system for the initial condition
\begin{equation}
  \label{eq:meta_pure_IC}
  \mathbf{r}(0) =(0.5,0.5,0.55).
\end{equation}
It boils down  to an initial state
$(\tau_1(\mathbf{r}),e_1(\mathbf{r}))(0)=(3.2,2.75)$ in the
metastable vapor zone with $p_1(0)=0.091$, $T_1(0)=1.02$,
$\mu_1(0)=2.15$,
and to an initial state
$(\tau_2(\mathbf{r}),e_2(\mathbf{r}))(0)=(3.2,2.25)$ belonging to the
spinodal zone with
$p_2(0)=0.06$, $T_2(0)=0.85$, $\mu_2(0)=1.75$. 
Notice that in this case, it holds
$e_1(\mathbf{r})(0)>e>e_2(\mathbf{r})(0)$.
The perturbation is small enough to ensure that the trajectories
$(\tau_i(\mathbf{r}),e_i(\mathbf{r}))(t)$ converge towards the point
$(\tau,e)$ in the metastable zone. The asymptotic state is
characterized by the fractions
\begin{equation}
  \label{eq:meta_pure_Tf}
  \mathbf{r}(T_f) =(0.499,0.499,0.499),
\end{equation}
with $p_1(T_f)=p_2(T_f)=0.0759$ and
$T_1(T_f)=T_2(T_f)=0.9374$. 
\begin{figure}[htpb]
  \centering
  \includegraphics[width=\linewidth]{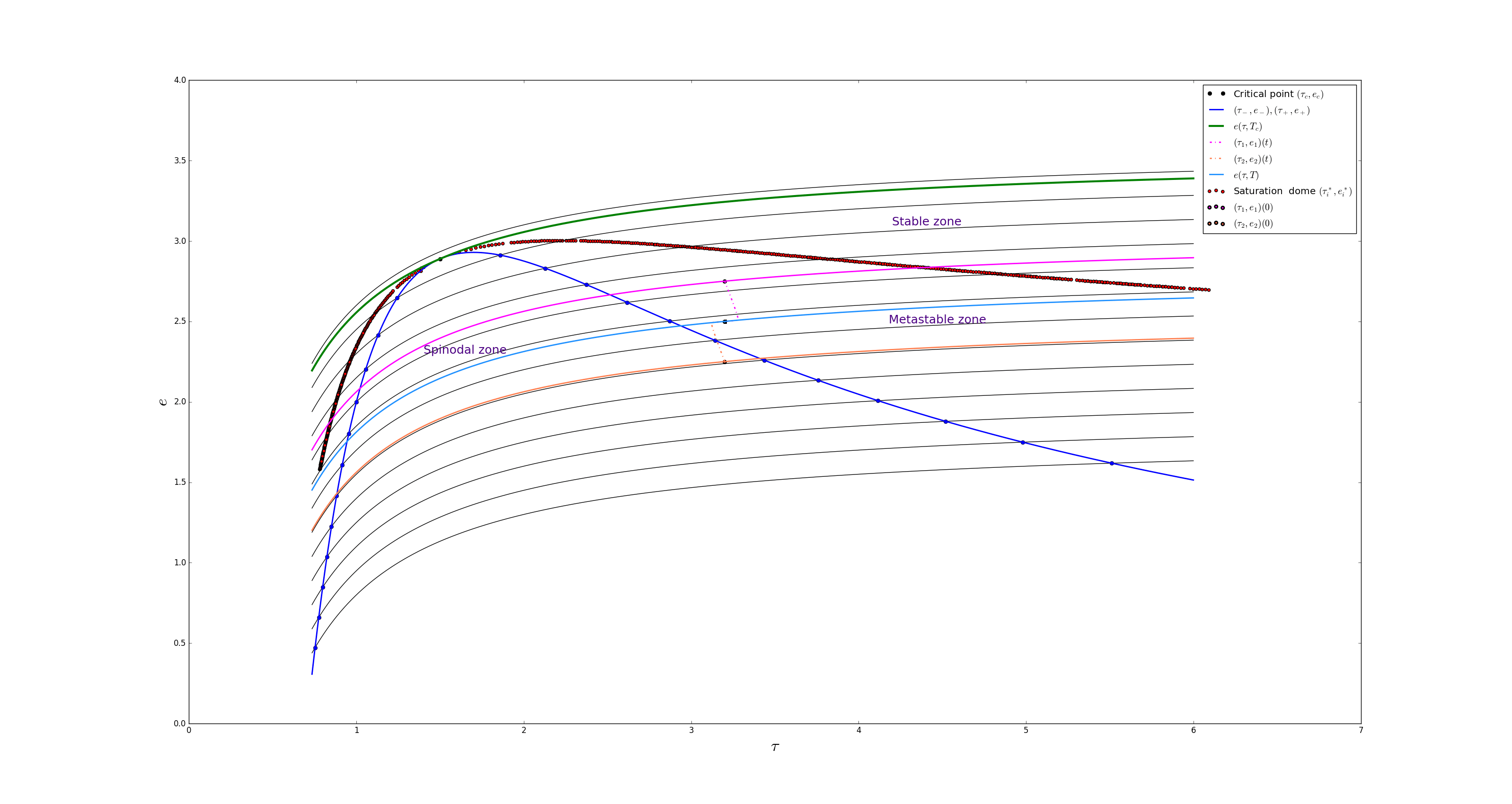}
  \includegraphics[width=\linewidth]{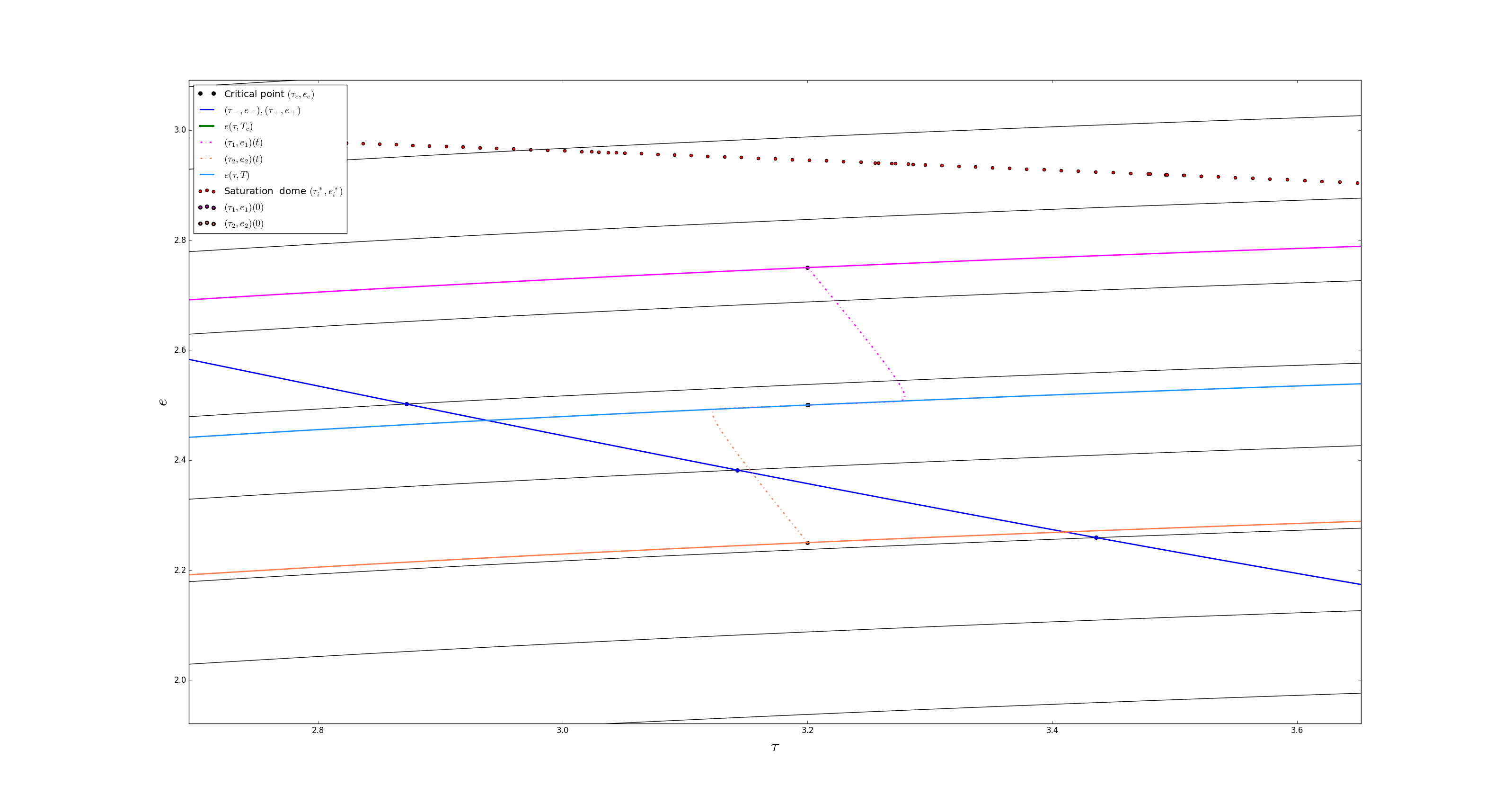}
 \caption{Metastable state and perturbation within the phase. Top figure:
   trajectories of the dynamical system
    \eqref{eq:fraction-model} in the $(\tau,e)$ plane. Starting from an
    initial state $(\tau_1(\mathbf{r}), e_1(\mathbf{r}))(0)$
    in the metastable vapor region (on the magenta isothermal curve),
    the trajectory $(\tau_1(\mathbf{r}),
    e_1(\mathbf{r}))(t)$ is represented with a dashed magenta line and converges
    towards the state $(\tau,e)$. The
    trajectory $(\tau_2(\mathbf{r}),
    e_2(\mathbf{r}))(t)$ is represented in orange and starts with an
    initial condition in the spinodal zone. 
    Bottom figure: zoom of trajectories $(\tau_i(\mathbf{r}),e_I(\mathbf{r}))(t)$.}
  \label{fig:meta_pure_tau_e}
\end{figure}
 
\begin{figure}[htpb]
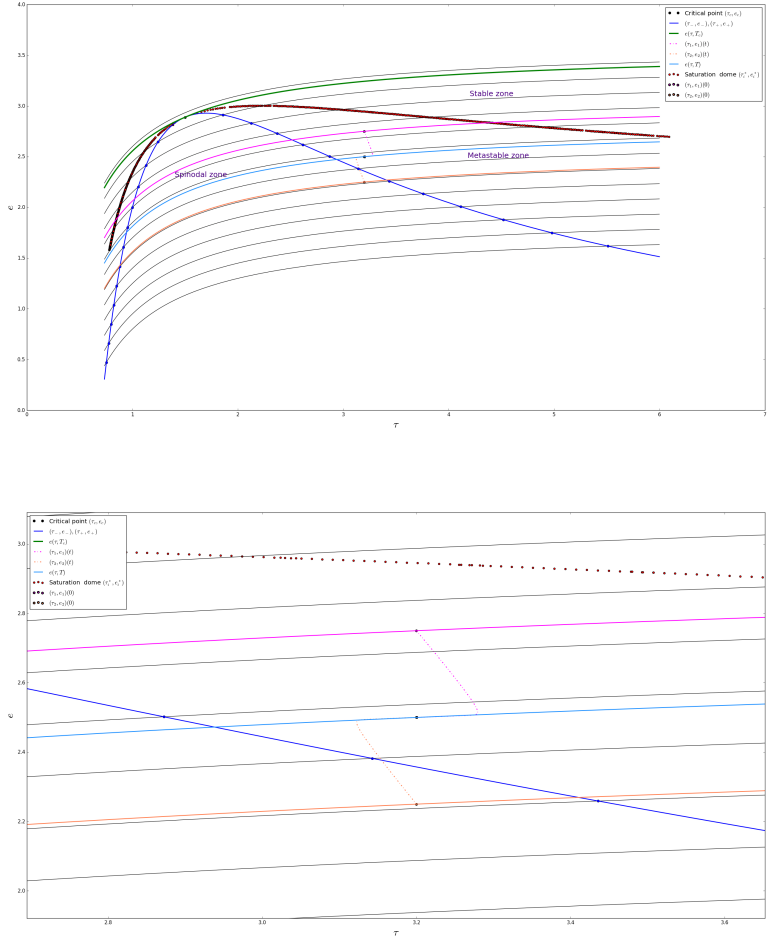

  \centering
  \includegraphics[width=\linewidth]{meta_pure_tau_e.png}
  \includegraphics[width=\linewidth]{meta_pure_tau_e_zoom.png}
  \caption{Metastable state and perturbation within the phase. Top
    figure: trajectories of the dynamical system 
    \eqref{eq:fraction-model} in the $(\tau,p)$ plane. Starting from an
    initial state $(\tau_1(\mathbf{r}), p(\tau_1(\mathbf{r}), e_1(\mathbf{r})))(0)$
    in the metastable vapor region (on the isothermal curve
    in magenta), the trajectory $(\tau_1(\mathbf{r}),
    p(\tau_1(\mathbf{r},e_1(\mathbf{r}))))(t)$ is represented with a dashed magenta line and converges
    towards the point $(\tau,e)$. The
    counterpart for the state $(\tau_2(\mathbf{r}),
    p(\tau_2(\mathbf{r}),e_2(\mathbf{r})))(t)$ is represented in orange, and starts from
    an initial datum in the spinodal zone.
    Bottom figure: zoom of trajectories $(\tau_1(\mathbf{r}),
    p(\tau_1(\mathbf{r}),e_1(\mathbf{r})))(t)$ and $(\tau_2(\mathbf{r}),
    p(\tau_2(\mathbf{r}),e_2(\mathbf{r})))(t)$ respectively.}
  \label{fig:meta_pure_tau_p}
\end{figure}

\textit{Metastable state and perturbation outside the phase}.
We provide in Figures \ref{fig:meta_maxwell_tau_e} and \ref{fig:meta_maxwell_tau_p}
the trajectories of the dynamical system for an initial condition
\begin{equation}
  \label{eq:meta_maxwell_IC}
  \mathbf{r}(0) = (0.16,0.5,0.328).
\end{equation}
It corresponds to an initial state
$(\tau_1(\mathbf{r}),e_1(\mathbf{r}))(0)=(0.8,2.1)$ in the
stable liquid zone and an initial state
$(\tau_2(\mathbf{r}),e_2(\mathbf{r}))(0)=(5.376,3.36)$ belonging to the
stable vapor zone. 
The perturbation is large enough to ensure that the trajectories
$(\tau_i(\mathbf{r}),e_i(\mathbf{r}))(t)$ converge towards a state
belonging to the saturation dome.

\begin{figure}[htpb]
  \centering
  \includegraphics[width=\linewidth]{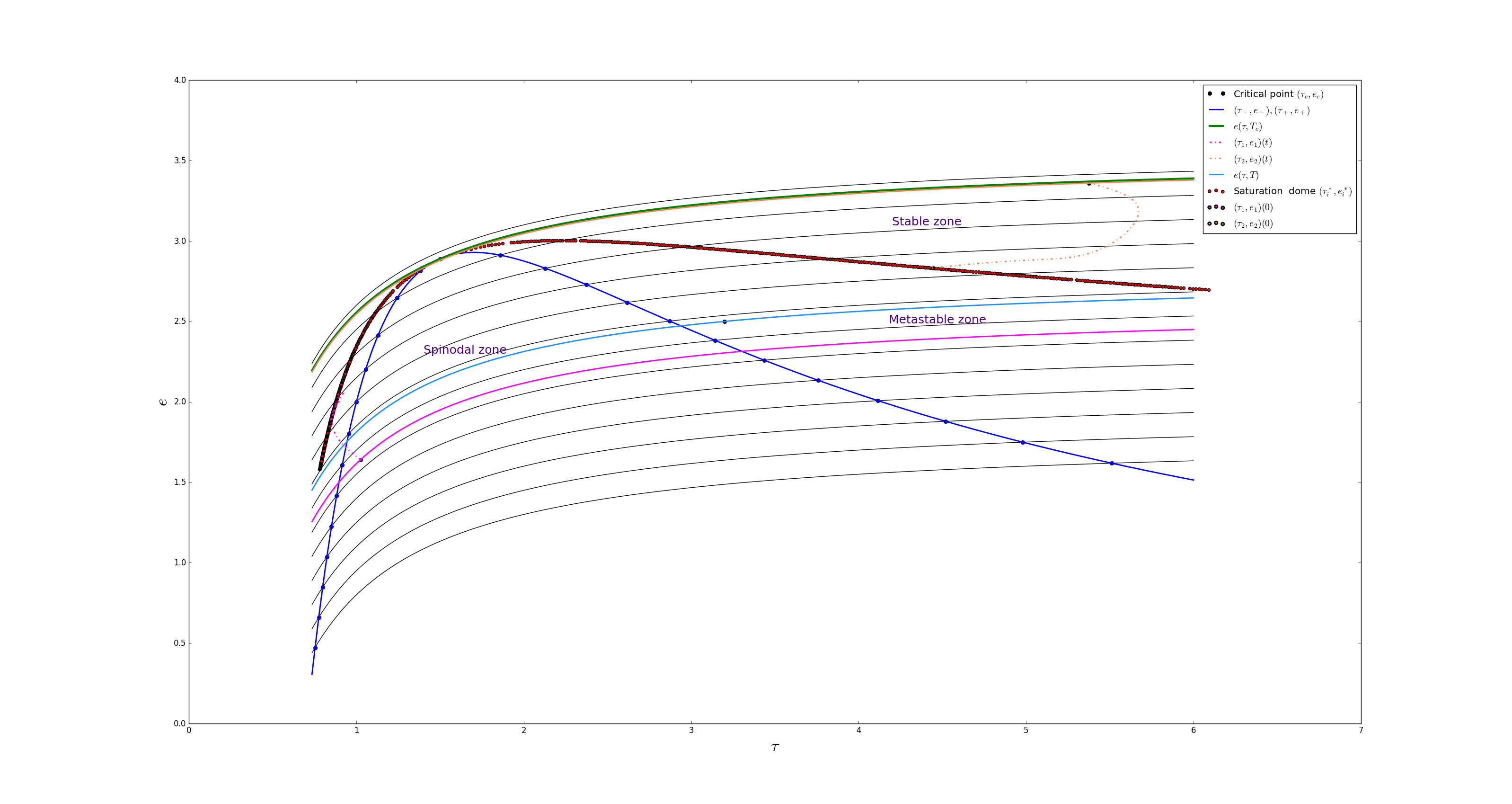}
  \includegraphics[width=0.8\linewidth]{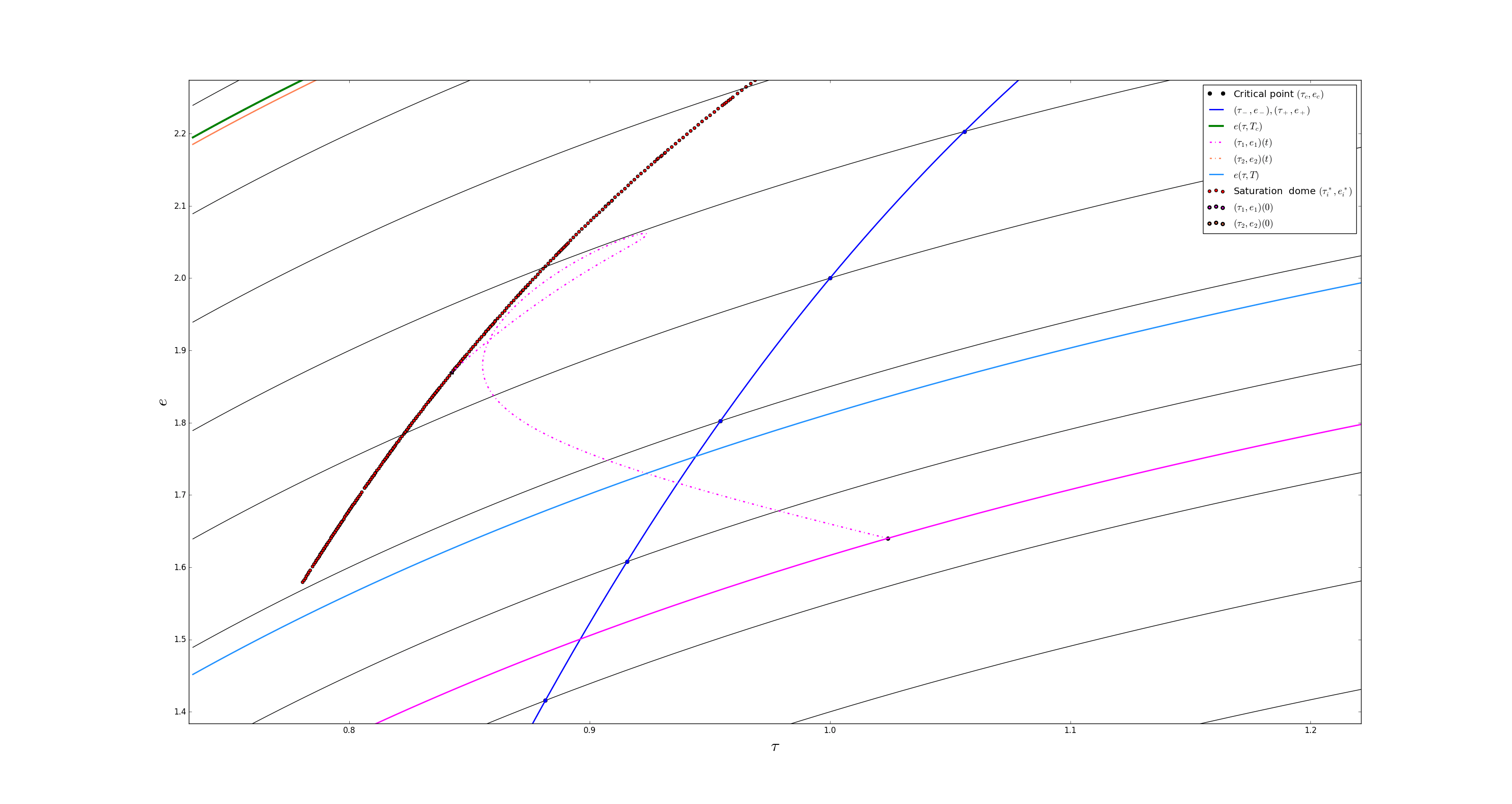}
  \includegraphics[width=0.8\linewidth]{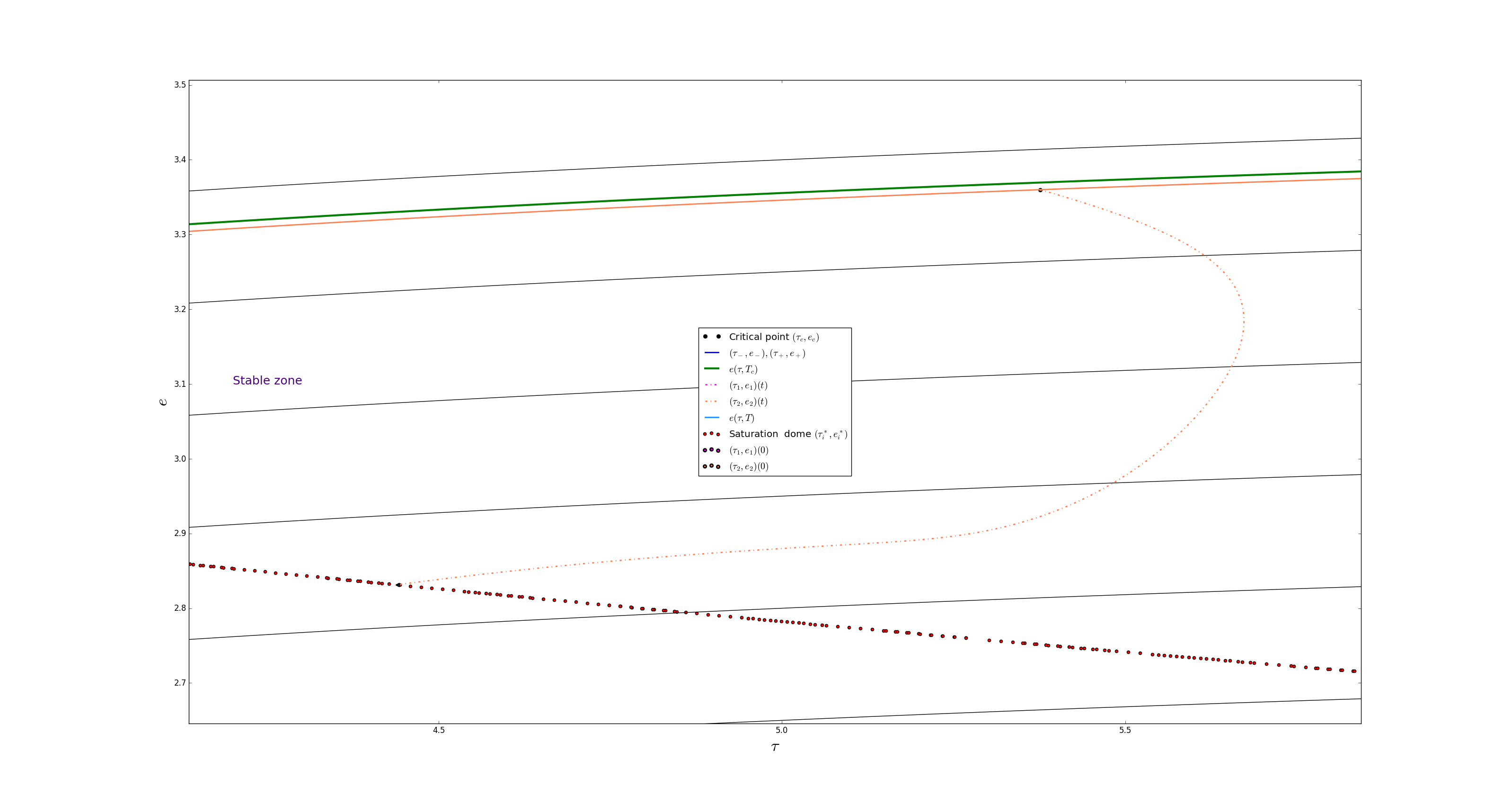}
 \caption{Metastable state and perturbation outside the phase.
   From top to bottom: trajectories of the dynamical system
    \eqref{eq:fraction-model} in the $(\tau,e)$ plane. Starting from an
    initial state $(\tau_1(\mathbf{r}), e_1(\mathbf{r}))(0)$
    in the metastable vapor region (on the magenta isothermal curve),
    the trajectory $(\tau_1(\mathbf{r}),
    e_1(\mathbf{r}))(t)$ is represented with a dashed magenta line and converges
    towards the state $(\tau,e)$. The
    trajectory $(\tau_2(\mathbf{r}),
    e_2(\mathbf{r}))(t)$ is represented in orange and starts with
    an initial condition in the spinodal zone.
    Middle and bottom figures: zoom of trajectories
    $(\tau_i(\mathbf{r}),
    e_i(\mathbf{r}))(t)$.}
  \label{fig:meta_maxwell_tau_e}
\end{figure}
 
\begin{figure}[htpb]
  \centering
  \includegraphics[width=\linewidth]{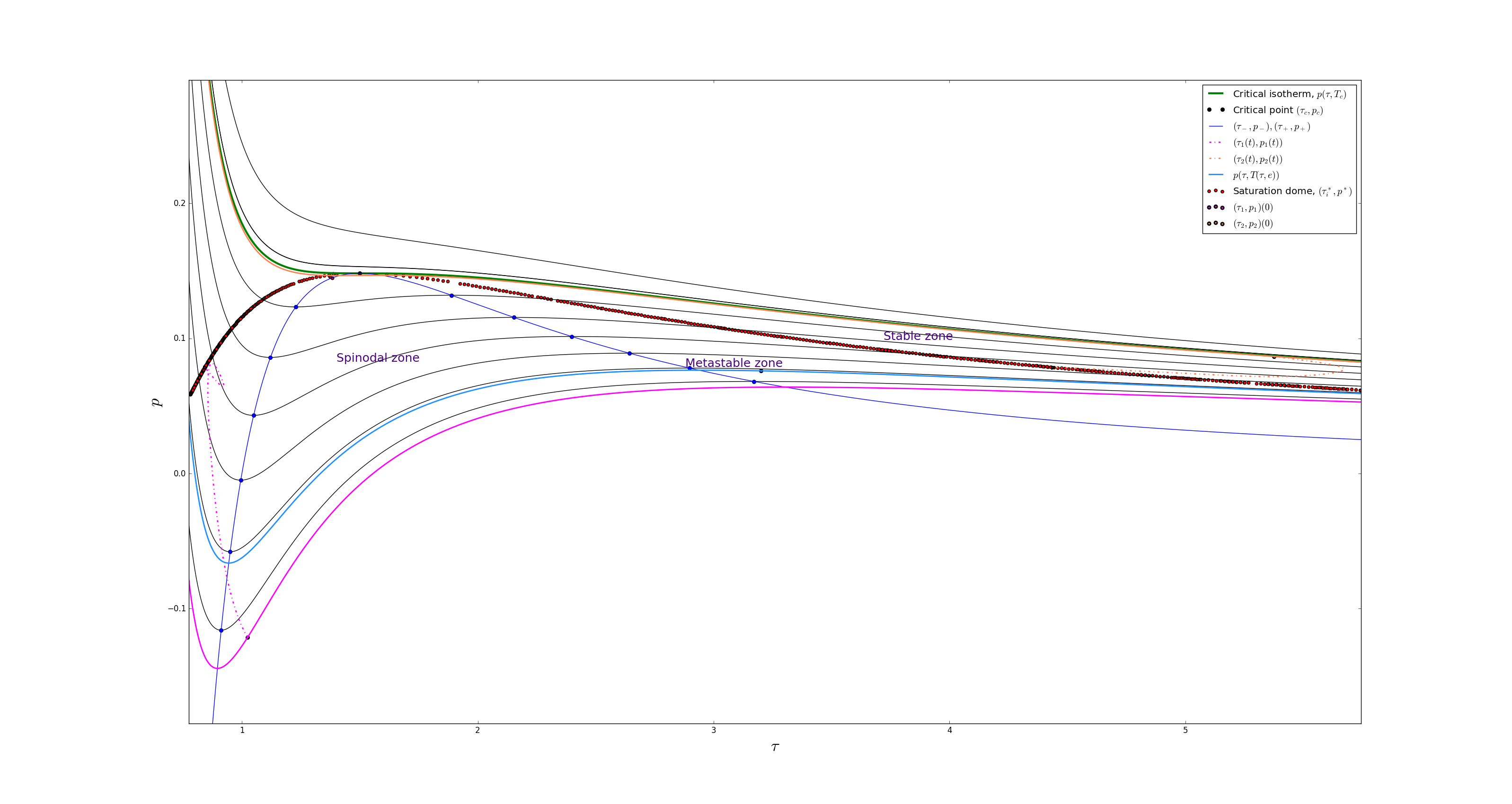}
  \includegraphics[width=0.8\linewidth]{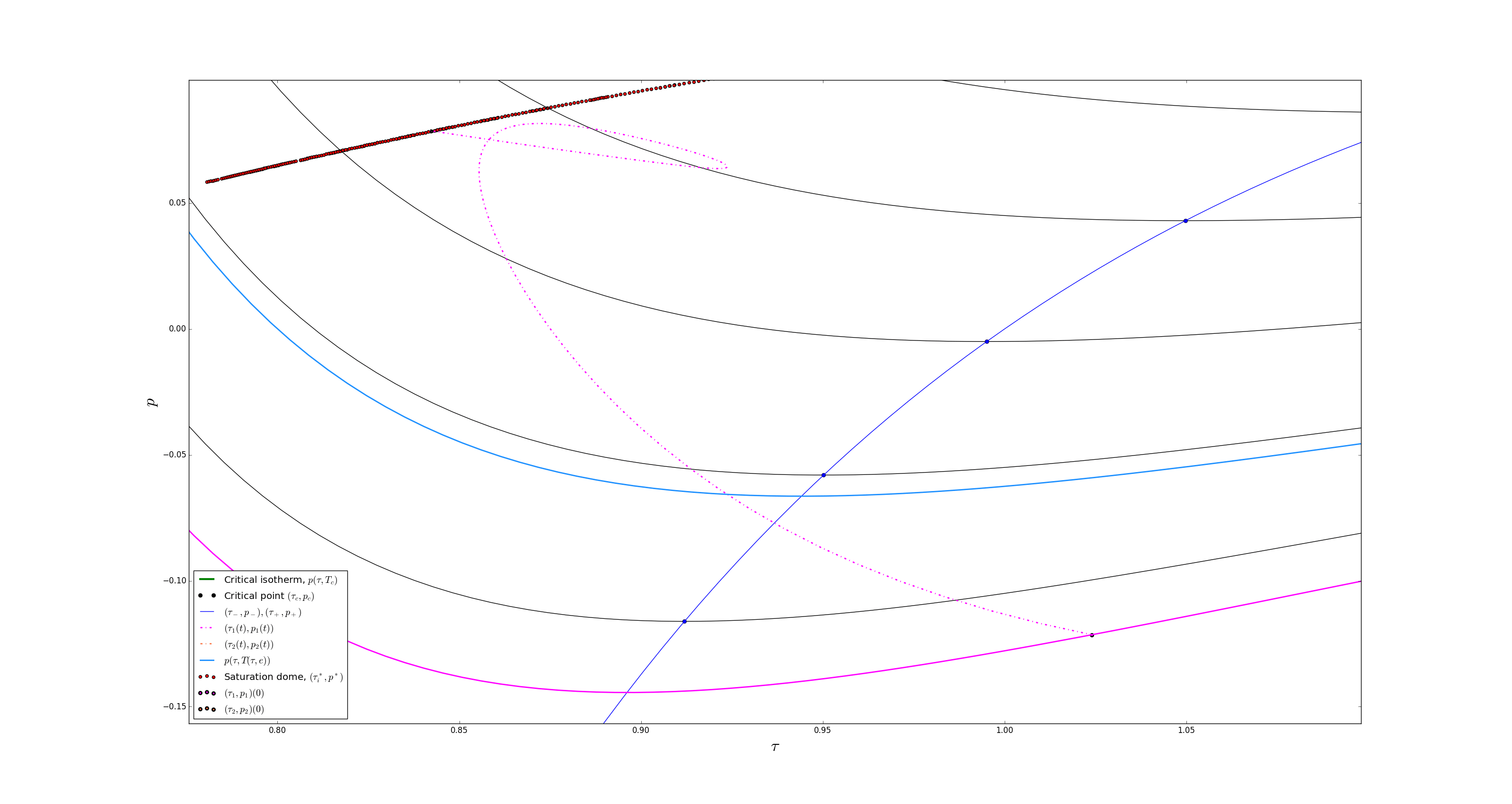}
  \includegraphics[width=0.8\linewidth]{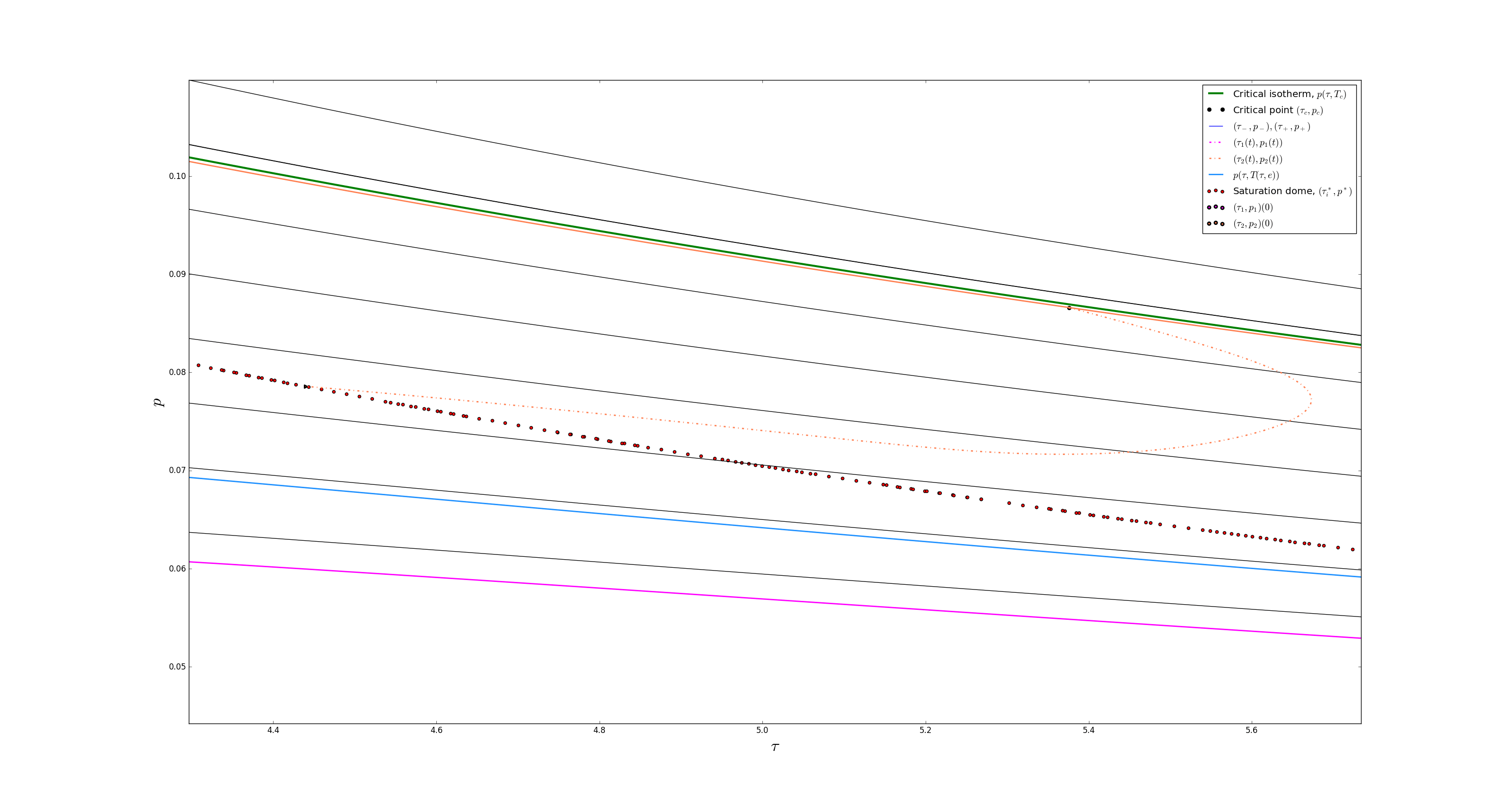}
  \caption{Metastable state and perturbation outside the phase.
   From top to bottom: trajectories of the dynamical system
    \eqref{eq:fraction-model} in the $(\tau,p)$ plane. Starting from an
    initial state $(\tau_1(\mathbf{r}), p(\tau_1(\mathbf{r}), e_1(\mathbf{r})))(0)$
    in the stable liquid region (on the magenta isothermal curve),
    the trajectory $(\tau_1(\mathbf{r}),
    p(\tau_1(\mathbf{r}), e_1(\mathbf{r})))(t)$
    is represented with a dashed magenta line and converges
    towards the point $(\tau,e)$. The
    trajectory $(\tau_2(\mathbf{r}),
    p(\tau_2(\mathbf{r}), e_2(\mathbf{r})))(t)$ is represented in orange.
    Middle and bottom figures: zoom of trajectories $(\tau_1(\mathbf{r}),
    p(\tau_1(\mathbf{r}), e_1(\mathbf{r})))(t)$ and $(\tau_2(\mathbf{r}),
    p(\tau_2(\mathbf{r}), e_2(\mathbf{r})))(t)$ respectively.}
  \label{fig:meta_maxwell_tau_p}
\end{figure}

\section{An homogeneous relaxation model}
\label{sec:homo-model}

The aim of this Section is to investigate the impact of the fluid dynamics
on the stability of metastable states and the apparition of phase
transition. To do so we now consider
the liquid-vapor mixture as a compressible
medium. It is described by its density
$\rho(t,x)$ (and $\tau(t,x)=1/\rho(t,x)$ its specific volume),
its velocity $u(t,x)$ and its internal energy $e(t,x)$, depending on
the time variable $t\in \mathbb R^+$ and the one-dimensional space variable
$x\in \mathbb R$. Since both phases evolve with the same velocity $u$,
we focus on so-called homogeneous models in the spirit of \cite{BH05, Hurisse17}.

The homogeneous model reads
\begin{equation}
  \label{eq:relax-model}
  \begin{cases}
    \p_t(\rho \alpha)+\p_x(\rho u \alpha)=\dfrac{\rho}{\varepsilon} \mathbb F^\alpha(\mathbf{r}),\\
    \p_t(\rho \varphi)+\p_x(\rho u \varphi)=\dfrac{\rho}{\varepsilon}  \mathbb F^\varphi(\mathbf{r}),\\
    \p_t(\rho \xi)+\p_x(\rho u \xi)=\dfrac{\rho}{\varepsilon}  \mathbb F^\xi(\mathbf{r}),\\
    \p_t(\rho )+\p_x(\rho u )=0,\\
    \p_t(\rho u)+\p_x(\rho u^2+ p)=0,\\
    \p_t(\rho E)+\p_x(\rho uE+ up )=0,
  \end{cases}
\end{equation}
where $E=e+u^2/2$ is the total energy.
The last three equations correspond to the Euler's system with
a mixture pressure law $p$ to be define in the sequel.
The first three equations are evolution equations of the fractions
$\mathbf{r}=(\alpha,\varphi,\xi)\in ]0,1[^3$, with relaxation source terms
$(\mathbb F^\alpha, \mathbb F^\varphi, \mathbb F^\xi)$ towards the
Thermodynamic equilibrium, which coincide with the dynamical system
\eqref{eq:fraction-model} studied in the previous section. The
parameter $\varepsilon>0$ stands for a relaxation time towards the
thermodynamic equilibrium. 

\subsection{Properties of the homogeneous relaxation model}
\label{sec:prop-homog-model}

First we focus on the convective part of the model
\eqref{eq:relax-model}.
It consists in the Euler system complemented with convection equations of
the fractions $\mathbf{r}(t,x)$; thus it inherits from the wave
structure of the Euler system.
In order to close the system, in agreement with the thermodynamical
constraints presented in the previous sections, the considered
pressure $p$ is a
function of the density $\rho$, the internal energy $e$ and the
fraction vector $\mathbf{r}$. 
Following \cite{BH05, Hurisse14, HelluyHurisse15, Hurisse17}, 
the mixture pressure
law should be derived from the
mixture entropy function $\mathscr S$ defined in \eqref{eq:Sr}.

Highlighting the dependency on $(\tau,e)$, the entropy of the mixture
reads
\begin{equation}
  \label{eq:mixture-entropy}
  \mathcal S(\tau,e,\mathbf{r})=\varphi_1 s(\tau_1(\mathbf{r}),e_1(\mathbf{r}))
  +(1-\varphi) s(\tau_2(\mathbf{r},e_2(\mathbf{r})),
\end{equation}
where $s$ is again the van der Waals EoS and the functions
$\tau_i(\mathbf{r})$ and $e_i(\mathbf{r})$ are defined in
\eqref{eq:volumes_energies_fractions}.
The associated pressure $p$ and the temperature $T$ of the mixture are
deduced from an extended Gibbs relation
\begin{equation}
\label{eq:Gibbs_relation_ext} 
Td   \mathcal S(\tau,e,\mathbf{r})=de+pd\tau+ \dfrac{\p   \mathcal  S}{\p \alpha}d\alpha
+ \dfrac{\p   \mathcal S}{\p \varphi}d\varphi +  \dfrac{\p   \mathcal
  S}{\p \xi}d\xi.
\end{equation}

Then the definitions of the mixture temperature and pressure, as
functions of $(\tau,e\mathbf{r})$, are
\begin{equation}
  \label{eq:mixture_pT} 
\begin{aligned}
  T(\tau,e,\mathbf{r}) & =
  \dfrac{\xi}{T(\tau_1(\mathbf{r}),e_1(\mathbf{r}))} +
  \dfrac{1-\xi}{T(\tau_2(\mathbf{r}),e_2(\mathbf{r}))},\\
\dfrac{p(\tau,e,\mathbf{r})}{  T(\tau,e,\mathbf{r})} & =\alpha 
  \dfrac{p(\tau_1(\mathbf{r}),e_1(\mathbf{r}))}{T(\tau_1(\mathbf{r}),e_1(\mathbf{r}))}
+ (1-\alpha)
\dfrac{p(\tau_2(\mathbf{r}),e_2(\mathbf{r}))}{T(\tau_2(\mathbf{r}),e_2(\mathbf{r}))}.
\end{aligned}
\end{equation}
The sound speed of the system \eqref{eq:relax-model} is 
\begin{equation}
  \label{eq:speed_sound0}
  c^2 =  -\tau^2 \dfrac{\p}{\p \tau} p + \tau^2 p \dfrac{\p}{\p e} p,
\end{equation}
which, using the expression of the mixture pressure
\eqref{eq:mixture_pT}, simplifies to
\begin{equation}
  \label{eq:speed_sound}
  \begin{aligned}
    - \dfrac{c^2}{T\tau^2} &= \dfrac{1}{\varphi}(-\alpha , \xi p) Hs_1
    \begin{pmatrix}
      -\alpha \\ \xi p
    \end{pmatrix}\\
    &+ \dfrac{1}{1-\varphi}(-(1-\alpha ), (1-\xi) p) Hs_2
    \begin{pmatrix}
      -(1-\alpha) \\ (1-\xi) p
    \end{pmatrix},
  \end{aligned}
\end{equation}
where $H_{s_i}$ denotes the hessian matrix of the phasic entropy
$s(\tau_i,e_i)$
\begin{equation}
  \label{eq:Hsi}
    H_{s_i}(\tau_i,e_i) =
  \begin{pmatrix}
    \dfrac{\p^2 s}{\p \tau_i^2} & \dfrac{\p^2 s}{\p \tau_i \p
      e_i}\\
    \dfrac{\p^2 s}{\p e_i \p \tau_i} & \dfrac{\p^2 s}{\p e_i^2}
  \end{pmatrix},
\end{equation}
and the dependency to the variables has been skipped for readability
reasons.

The convective system is hyperbolic if and only if the the right-hand side of
\eqref{eq:speed_sound} is negative. This is the case if the hessian
matrices $H_{s_1}$ and $H_{s_2}$ are negative definite, which is true in
concavity region of the van der Waals entropy, that is outside the
spinodal region $Z_\text{Spinodal}$.
Hence the system is non-strictly hyperbolic.
However, it has been highlighted in \cite{james} in the
isothermal context that the domains of hyperbolicity of
\eqref{eq:relax-model} strongly depend on the attraction basins of the
dynamical system \eqref{eq:fraction-model}. More precisely,
the invariant domains  of hyperbolicity for the relaxed
system are subsets of the attraction basins of the dynamical system. 

The convective part of the model \eqref{eq:relax-model}
inherits the wave structure of the Euler system. 
The fields associated with the fractions $\mathbf{r}$ are linearly
degenerated with the eigenvalue $u$. The momentum and energy
conservation laws are genuinely nonlinear fields with velocities $u\pm
c$ and the mass equation is linearly degenerated with velocity $u$.

The Riemann invariants associated to the wave of velocity $u$ are the
velocity $u$ and the pressure. Moreover the volume fraction, the mass
fraction and the energy fraction are Riemann invariants associated to
the genuinely nonlinear waves.

The positivity of the
fractions is ensured by both 
the positivity property of the
dynamical system, see Proposition
\ref{prop:sys_dyn_prop}-\eqref{it:prop1},
and the form of the convection equations of the
fractions $\mathbf{r}$, see
\cite{hurisse:hal-01976903}.

\subsection{Numerical illustrations}
\label{sec:VF}
Numerous numerical schemes have been proposed for homogeneous models
with relaxation, see again \cite{BH05} and \cite{Hurisse17} for models
involving stiffened gas or tabulated laws. 
We propose here a very standard approach, and take a special interest
to numerical illustrations.

The numerical approximation consists in a fractional step method.

We restrict to regular meshes of size $\Delta x = x_{i+1/2}-x_{i-1/2},
\, i\in \mathbb Z$.
The time step is $\Delta t = t^{n+1}-t^n$, $n\in \mathbb N$.
We focus on the convective part of \eqref{eq:relax-model} with an
initial condition
\begin{equation}
  \label{eq:dtW}
  \begin{cases}
    \p_t W + \p_x F(W) = 0,\\
    W(0,x) = W_0(x), 
  \end{cases}
\end{equation}
with $W=(\rho \alpha, \rho \varphi, \rho \xi, \rho, \rho u, \rho
E)^T$, and $F(W)= u W + p D$, with $D=(0,0,0,0, 1, u)^T$.
Let $W(t^n, x)$ be approximated by 
\begin{equation}
  \label{eq:VF0}
  w_i^n = \dfrac{1}{\Delta x}\int_{x_{i-1/2}}^{x_{i+1/2}} W(t^n, x)
  dx.
\end{equation}
Integrating the system on the space-time domain
$[x_{i-1/2},w_{i+1/2}]\times[t^n, t^{n+1}]$ provides
\begin{equation}
  \label{eq:VF1}
  W_i^{n+1}=W_i^n - \dfrac{\Delta t}{\Delta x}\left( \mathcal
    F_{i+1/2}^n -  \mathcal F_{i-1/2}^n\right).
\end{equation}
We choose the explicit HLLC numerical flux \cite{toro} to define the fluxes $\mathcal
F_{i+1/2}^n$ through the  interface $x_{i+1/2}\times [t^n, t^{n+1}]$.

The source terms of the system \eqref{eq:relax-model} are accounted
for by discretizing
\begin{equation}
  \label{eq:source1}
  \begin{cases}
    \dfrac{d}{dt}\rho(t)= 0,\\
    \dfrac{d}{dt}(\rho u)(t)= 0,\\
    \dfrac{d}{dt}(\rho E)(t)= 0,
  \end{cases}
  \begin{cases}
    \dfrac{d}{dt}(\rho \alpha)(t)= \dfrac{\rho}{\varepsilon} \mathbb F_\alpha(\mathbf{r},
    \rho, e),\\
    \dfrac{d}{dt}(\rho \varphi)(t)=\dfrac{\rho}{\varepsilon} \mathbb F_\varphi(\mathbf{r},
    \rho, e),\\
    \dfrac{d}{dt}(\rho \xi)(t)= \dfrac{\rho}{\varepsilon} \mathbb F_\xi(\mathbf{r},
    \rho, e).
  \end{cases}
\end{equation}
It can be written in an equivalent manner
\begin{equation}
  \label{eq:source2}
    \begin{cases}
    \dfrac{d}{dt} \alpha(t)= \dfrac{1}{\varepsilon} \mathbb F_\alpha(\mathbf{r}(t),
    \rho(0), e(0)),\\
    \dfrac{d}{dt}\varphi(t)= \dfrac{1}{\varepsilon} \mathbb F_\varphi(\mathbf{r}(t),
    \rho(0), e(0)),\\
    \dfrac{d}{dt} \xi(t)=  \dfrac{1}{\varepsilon} \mathbb F_\xi(\mathbf{r}(t),
    \rho(0), e(0)).
  \end{cases}
\end{equation}
The numerical approximation $W^{n+1}$ is an approximated solution of the
system \eqref{eq:source1} at time $t=\Delta t$ with the initial
condition $W^{n+1,*}$ deduced from the convection step.

The numerical method for the convective part has been validated on single-phase test cases
with a real van der Waals EoS proposed in \cite{GHS02}.

In order to capture accurately the thermodynamic equilibrium, one
should ideally consider infinitely fast relaxation with $\varepsilon=0$. The
integration of the source terms \eqref{eq:source2} reduces the the the
projection of the solution on the appropriate equilibrium (described
in Proposition \ref{prop:equilibrium_states}), depending
on the basin of attraction the state $W^{n+1,*}$ belongs to.

Unfortunately, as mentioned in Section \ref{sec:equil-attr}, the
boundaries of the basins of attraction are not explicitly defined.
This is for instance the case of the basins of attraction of the
spinodal zone and the metastable zones.
These basins are either delimited by the saturation dome, which
determination requires the resolution of the nonlinear system
\eqref{eq:egalite}, or by an unstable manifold, which numerical approximation is
intrinsically not reachable.
Hence we consider in the sequel finite but sufficiently small
relaxation time parameter $\varepsilon$ coupled with a 
Runge-Kutta 4 integration method. 

Note that in the isothermal case, studied in \cite{ghazi19}, the
determination of the basins of attraction is precise enough to perform
infinitely fast relaxation with $\varepsilon=0$.

\subsubsection{Single-phase test case}
\label{sec:single-phase-num}

We provide a validation test case which mimics the one proposed in
\cite{GHS02}, for a non-reduced van der Waals equation of state.
The Riemann data correspond to a left stable liquid state and a
right stable vapor state, namely
\begin{equation}
  \label{eq:sod_test}
  \begin{aligned}
    \rho_L = 1.111, \quad u_L = 0., \quad p_L = 0.2, \quad \alpha_L=\varphi_L=\xi_L=10^{-6},\\
    \rho_R = 0.277, \quad u_R = 0, \quad p_R = 0.11 \quad \alpha_R=\varphi_R=\xi_R=10^{-6}.
  \end{aligned}
\end{equation}
This test case corresponds to a single-phase subsonic 1-rarefaction
wave, since the fractions are constant and small. 
The domain $[0,1]$ is decomposed into 500 cells and the discontinuity
is applied at $x_0=0.5$.
 The final time of
computation is $0.4$s and the CFL coefficient is $0.9$.

The global behaviour is coherent with the results provided
\cite{GHS02} .
In particular, the curve profiles around the contact discontinuity
is not precise enough. A more robust numerical flux should be
considered to overcome the problem, which actually disappears as the
grid is refined.

\begin{figure}[htpb]
  \centering
  \includegraphics[width=0.45\linewidth]{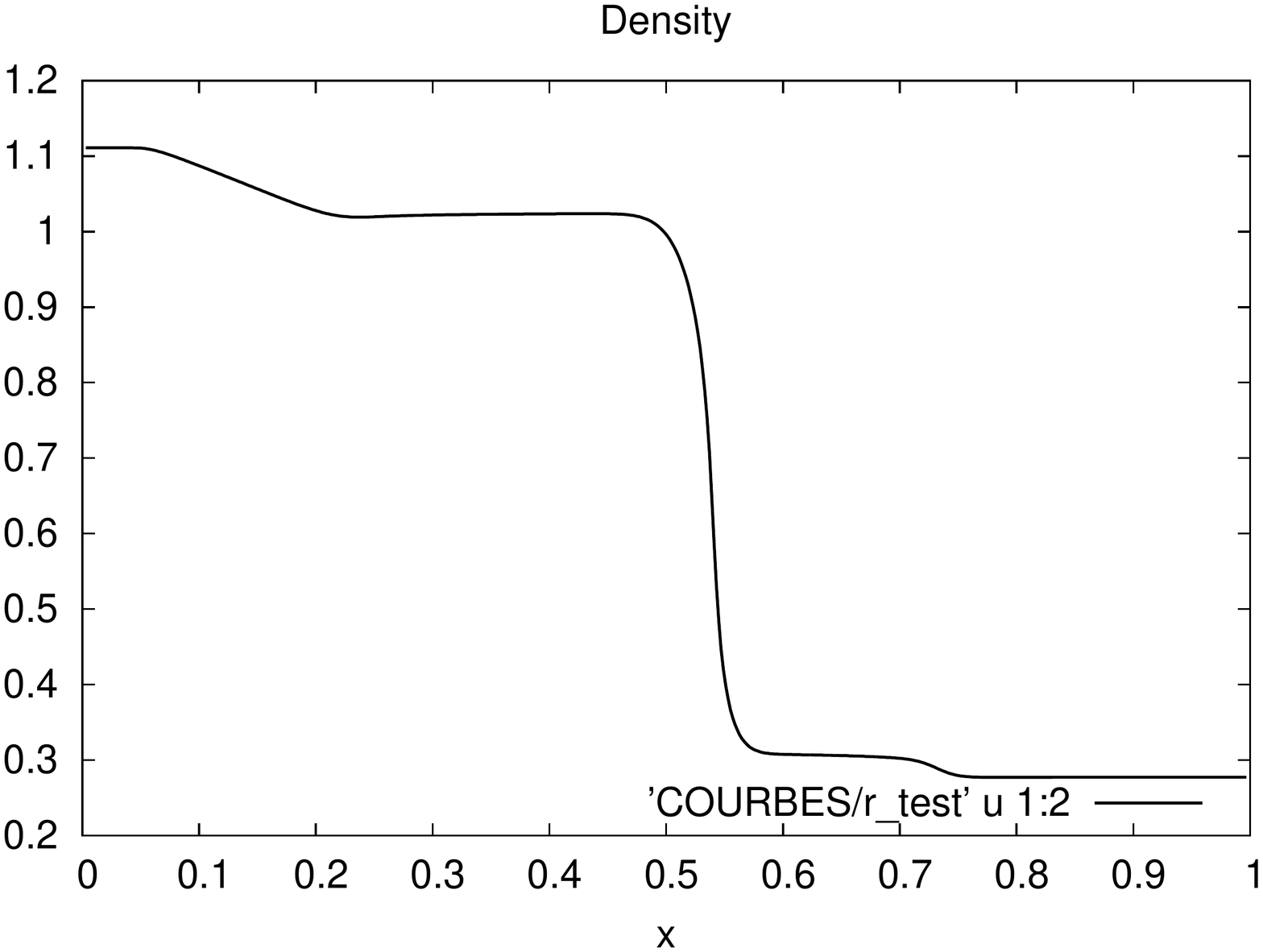}
  \includegraphics[width=0.45\linewidth]{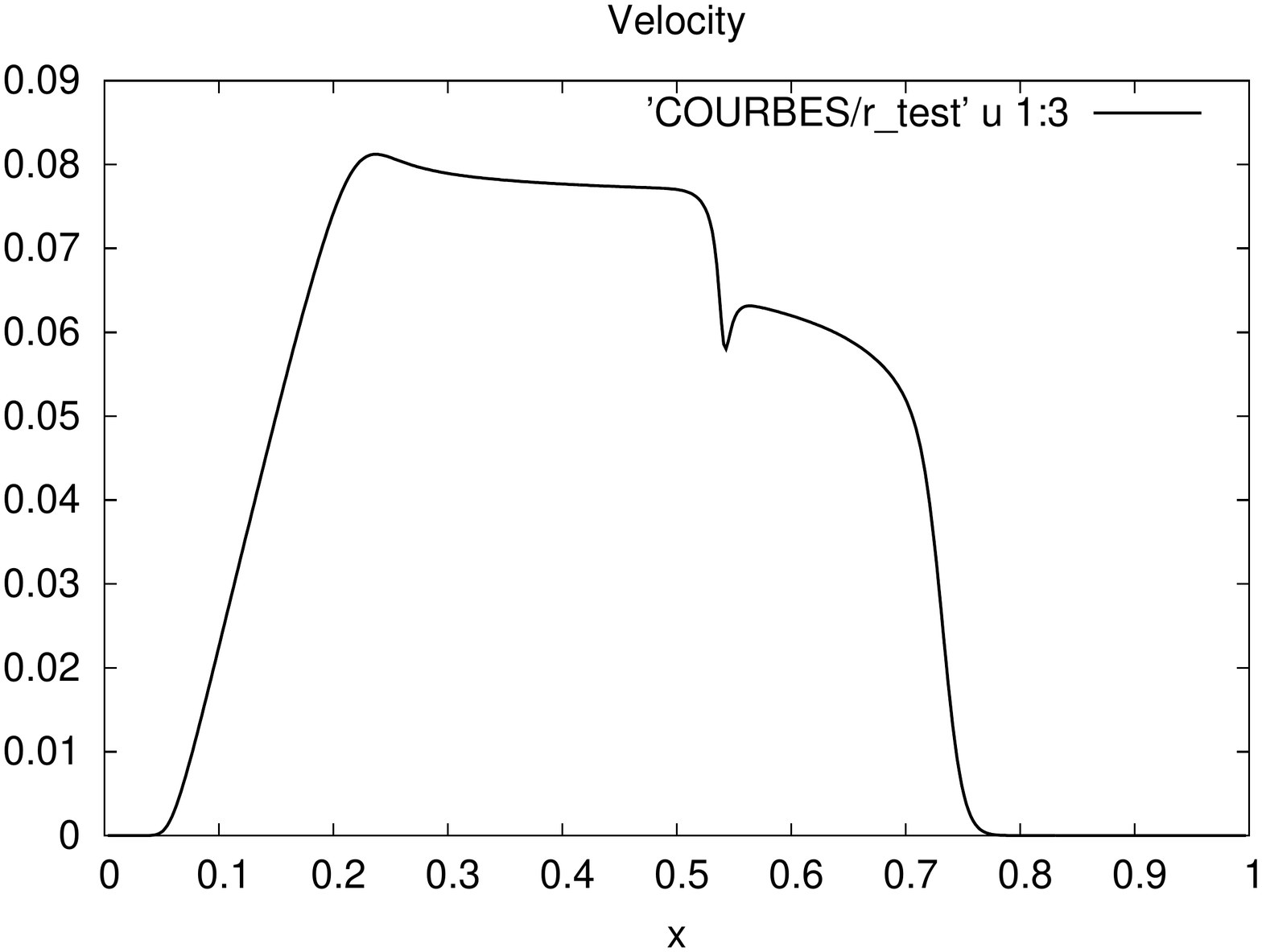}
  \includegraphics[width=0.45\linewidth]{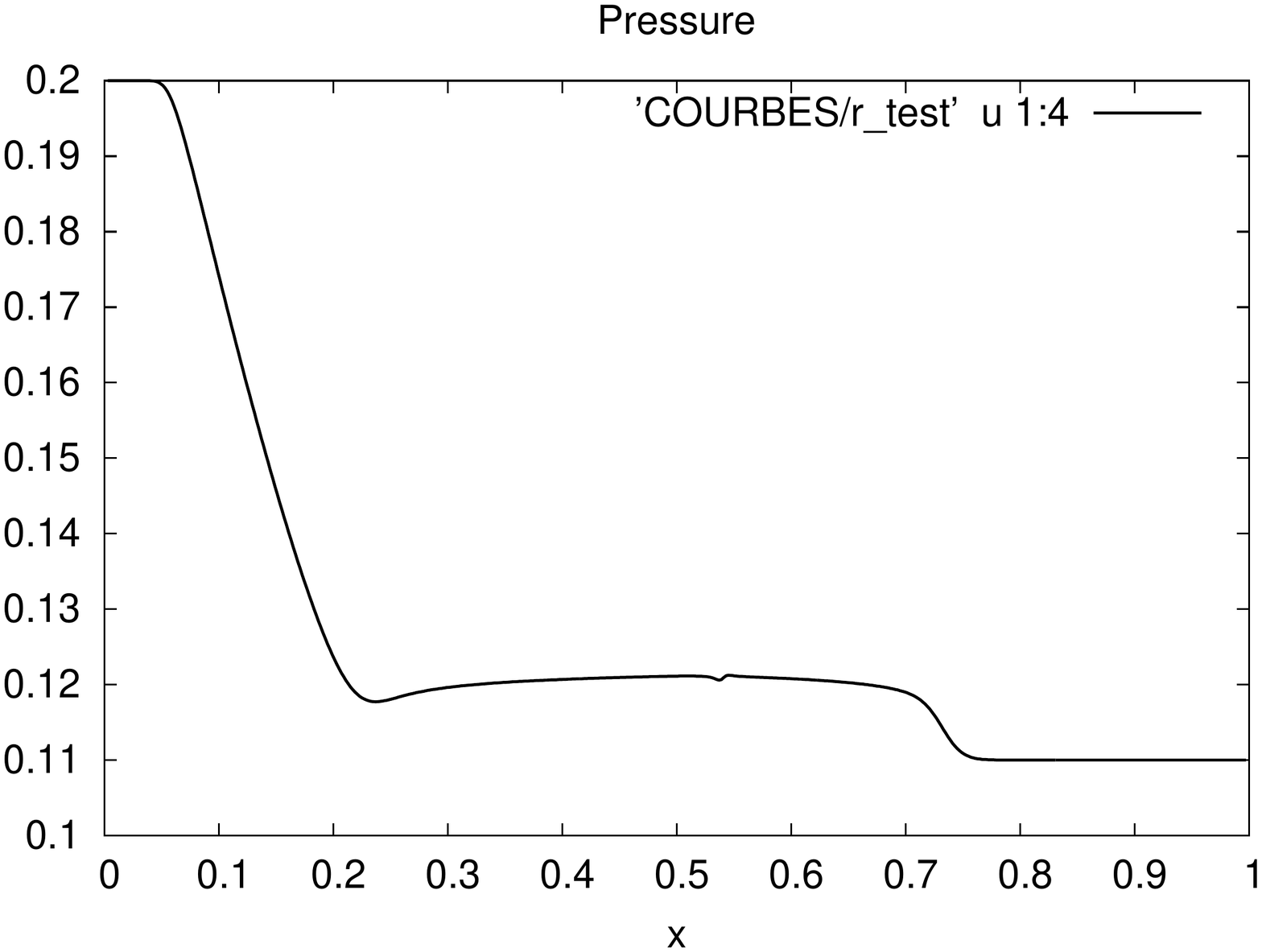}
  \includegraphics[width=0.45\linewidth]{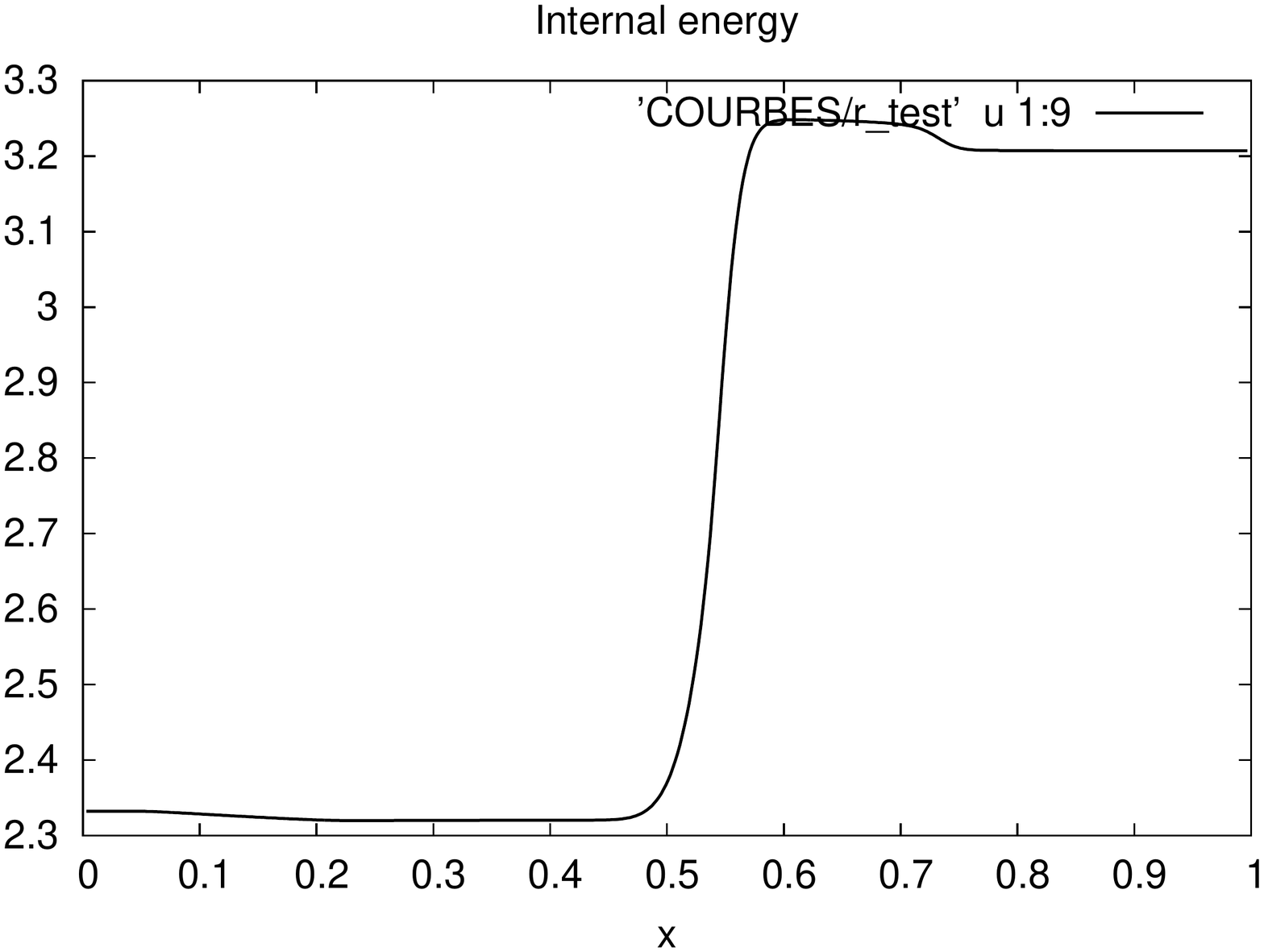}
  \includegraphics[width=0.45\linewidth]{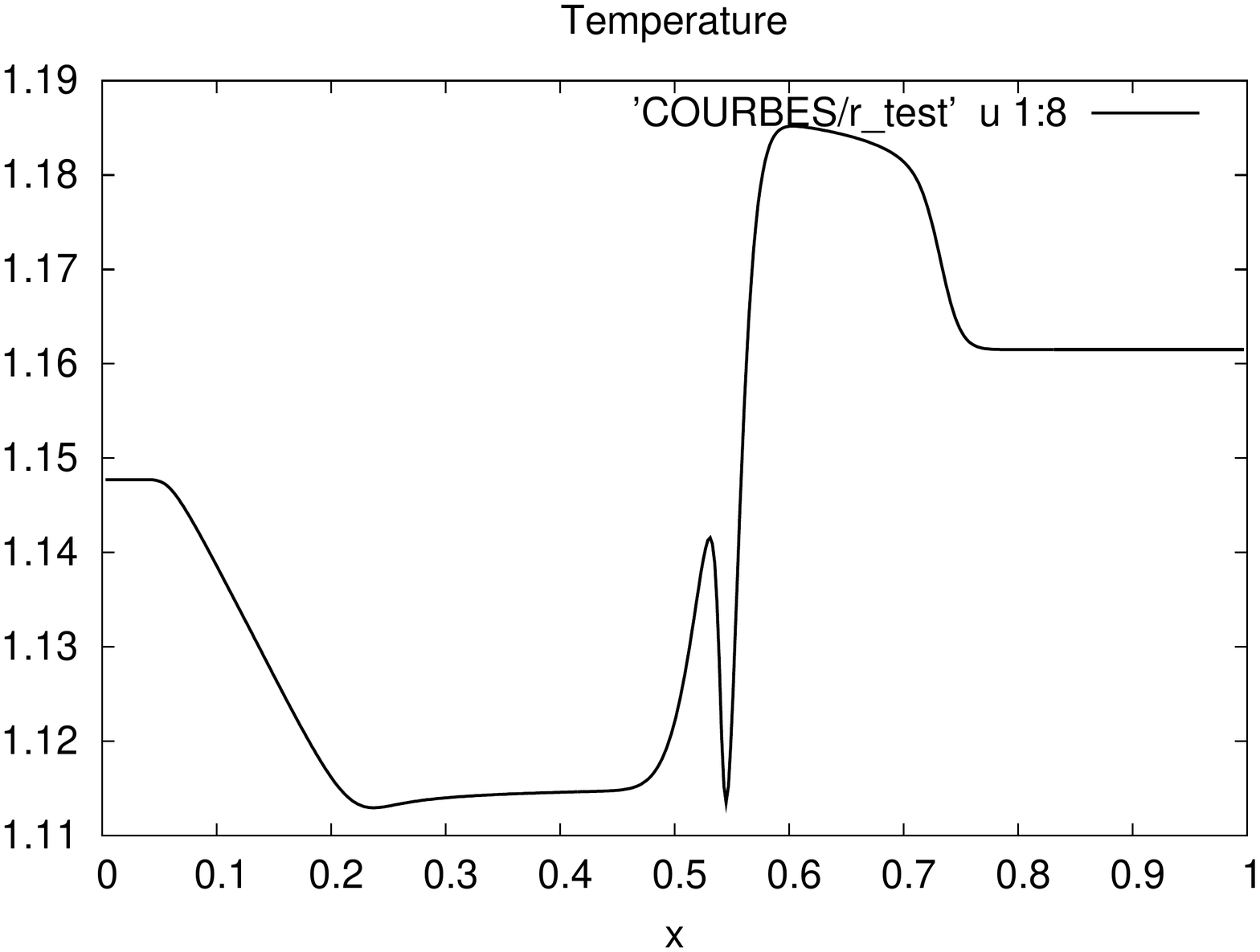}
  \includegraphics[width=0.45\linewidth]{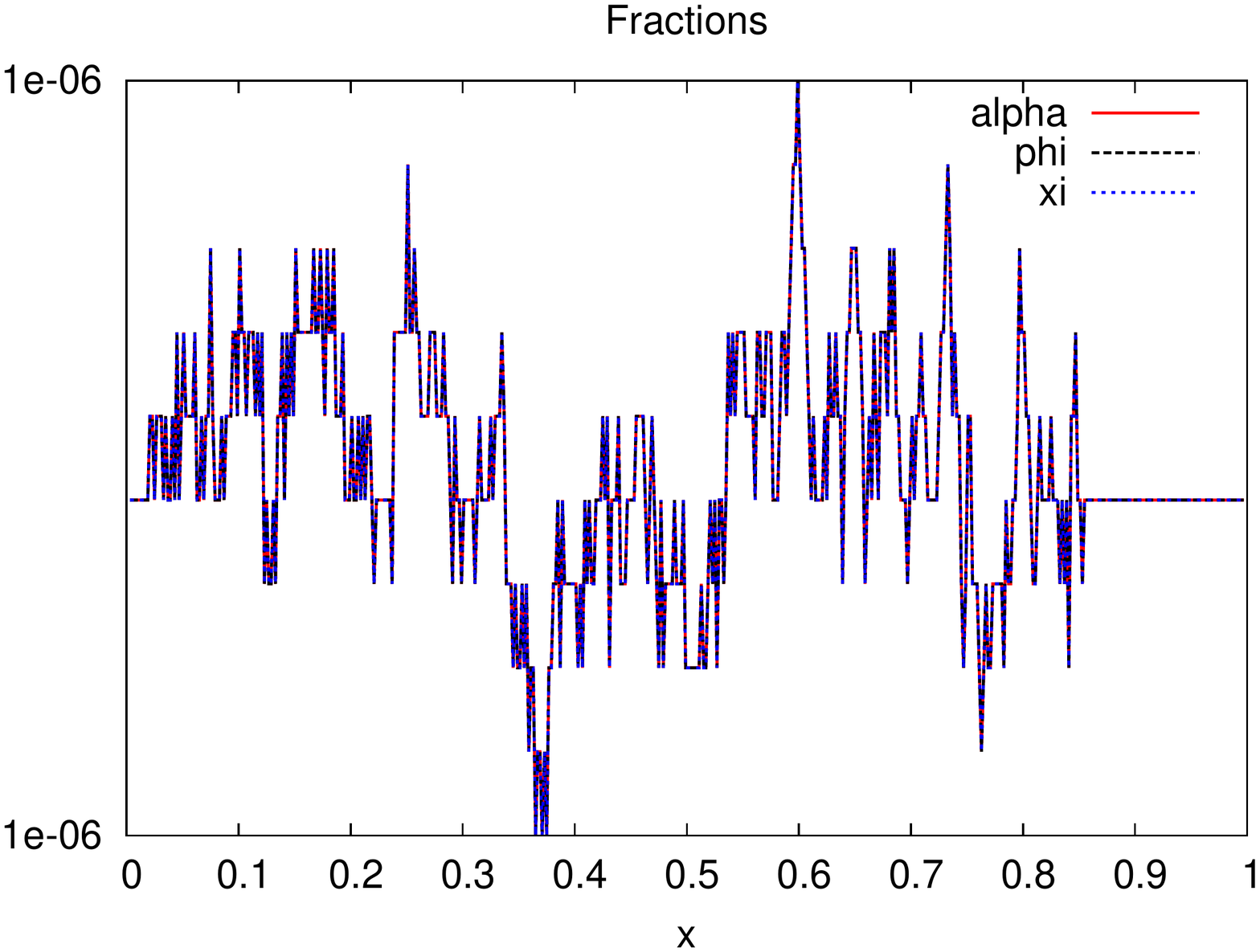}
  \caption{Sod test case. From top left to bottom right: density
    profile, velocity, pressure, internal energy, temperature and
    fractions profile with respect to the space variable.}
  \label{fig:sod}
\end{figure}

\subsubsection{Interaction of a metastable liquid state and a
  saturation state}
\label{sec:meta-satu}
The test case corresponds to a Riemann problem with a left metastable
liquid state and a right saturation state.
The initial data are
\begin{equation}
  \label{eq:meta_saturation}
  \begin{aligned}
    \rho_L = 1.25, \quad u_L = 0., \quad p_L = 0.02, \quad
    \alpha_L=\varphi_L=\xi_L=0.3,\\
    \rho_R = 0.3125, \quad u_R = 0, \quad p_R = 0.0785 \quad
    \alpha_R=0.0907, \quad \varphi_R=0.344, \quad \xi_R=0.2577.
  \end{aligned}
\end{equation}
The right state is at saturation since it holds
\begin{equation}
  \label{eq:meta_saturation2}
  p_{1,R} = p_{2,R}=0.0785, \quad T_{1,R} = T_{2,R}=1.0188, \quad \mu_{1,R}=\mu_{2,R}=2.102.
\end{equation}

\begin{figure}[htpb]
  \centering
  \includegraphics[width=0.45\linewidth]{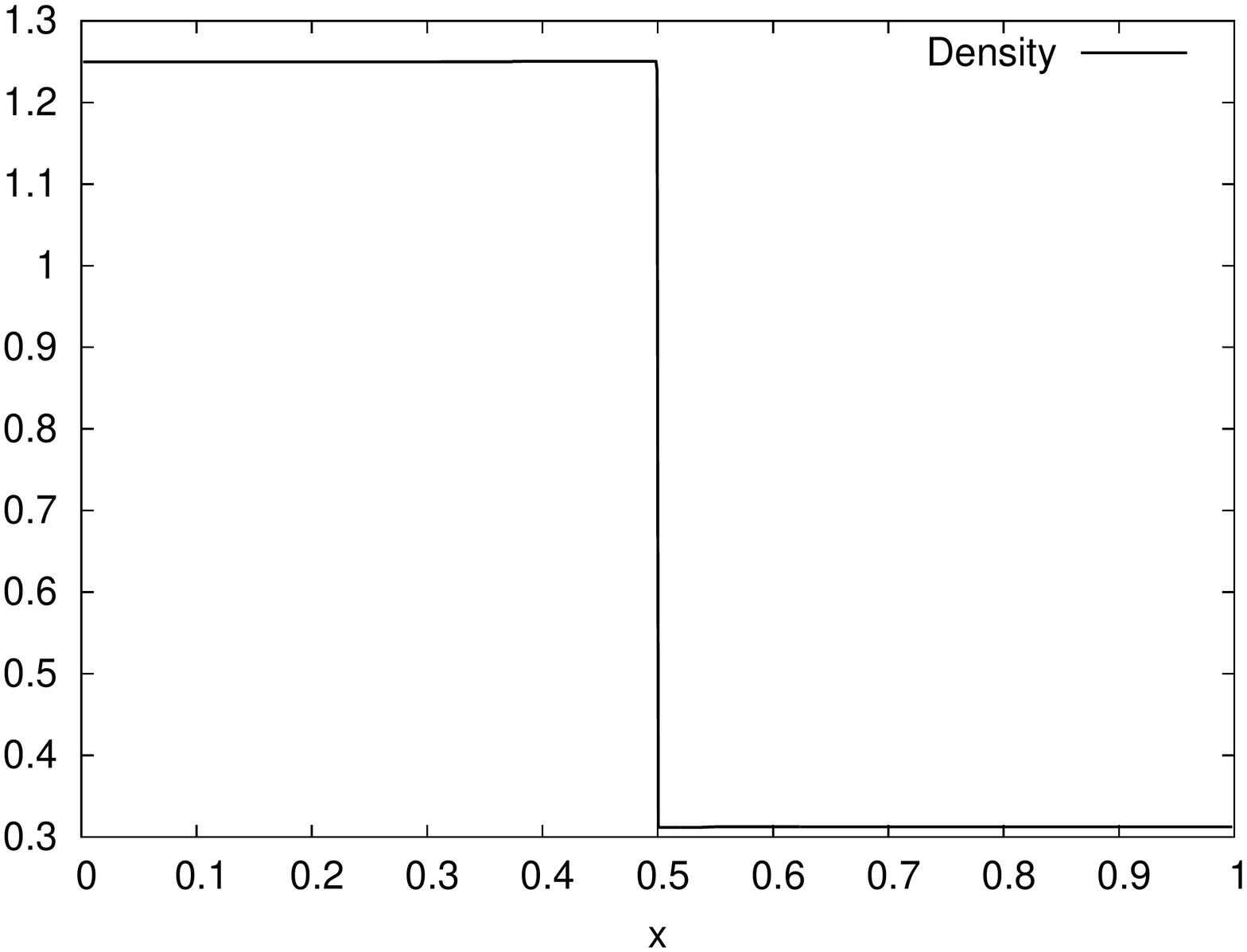}
  \includegraphics[width=0.45\linewidth]{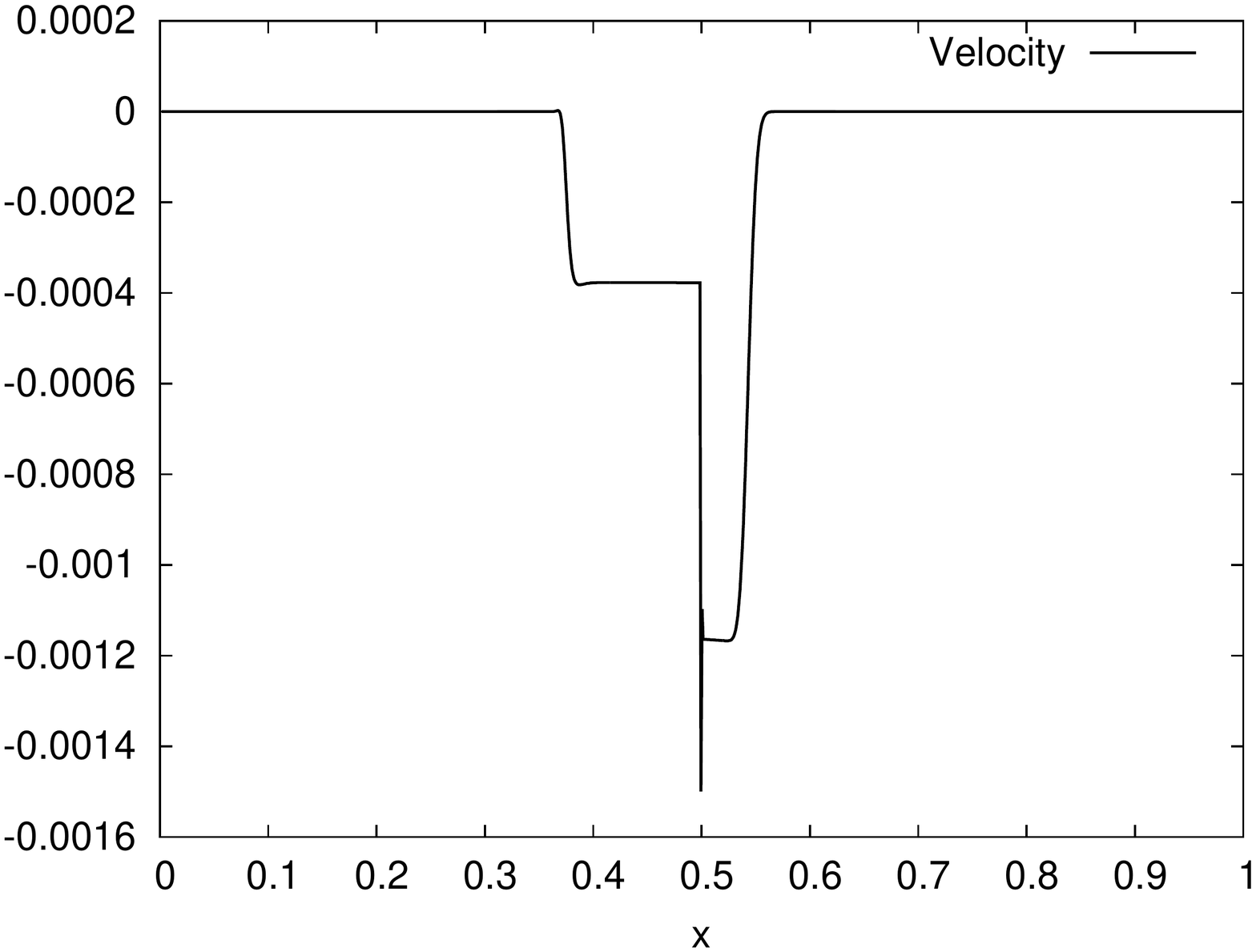}
  \includegraphics[width=0.45\linewidth]{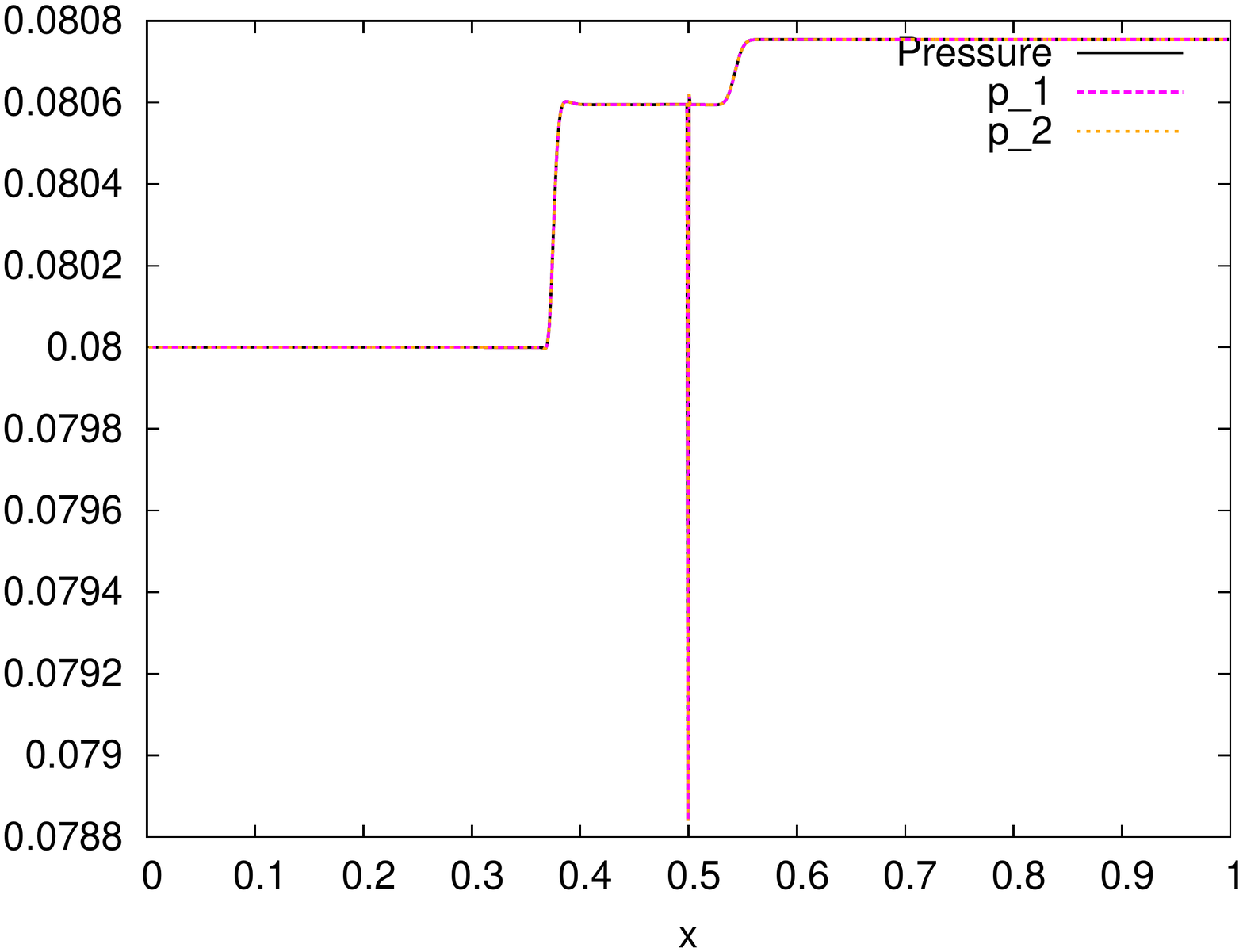}
  \includegraphics[width=0.45\linewidth]{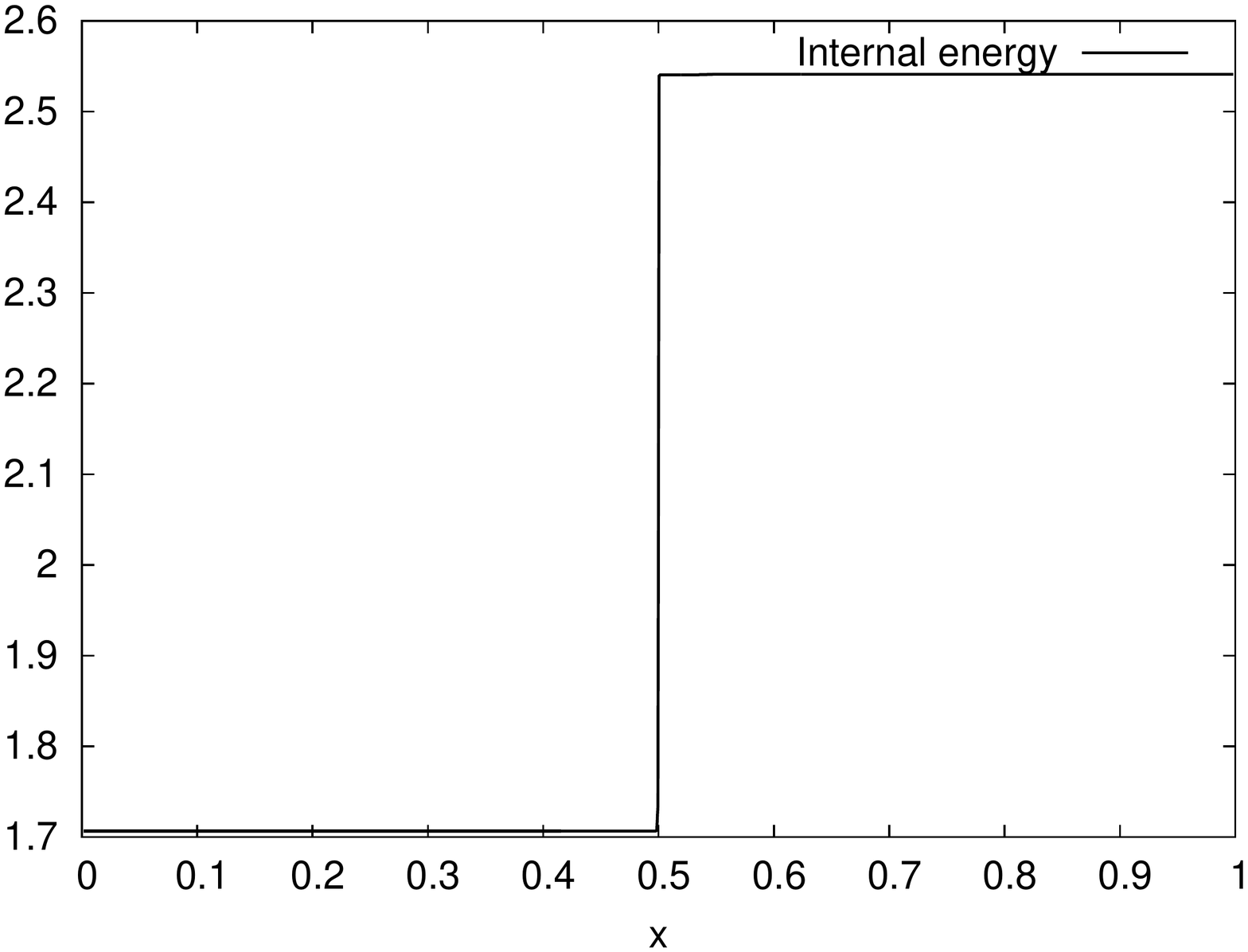}
  \includegraphics[width=0.45\linewidth]{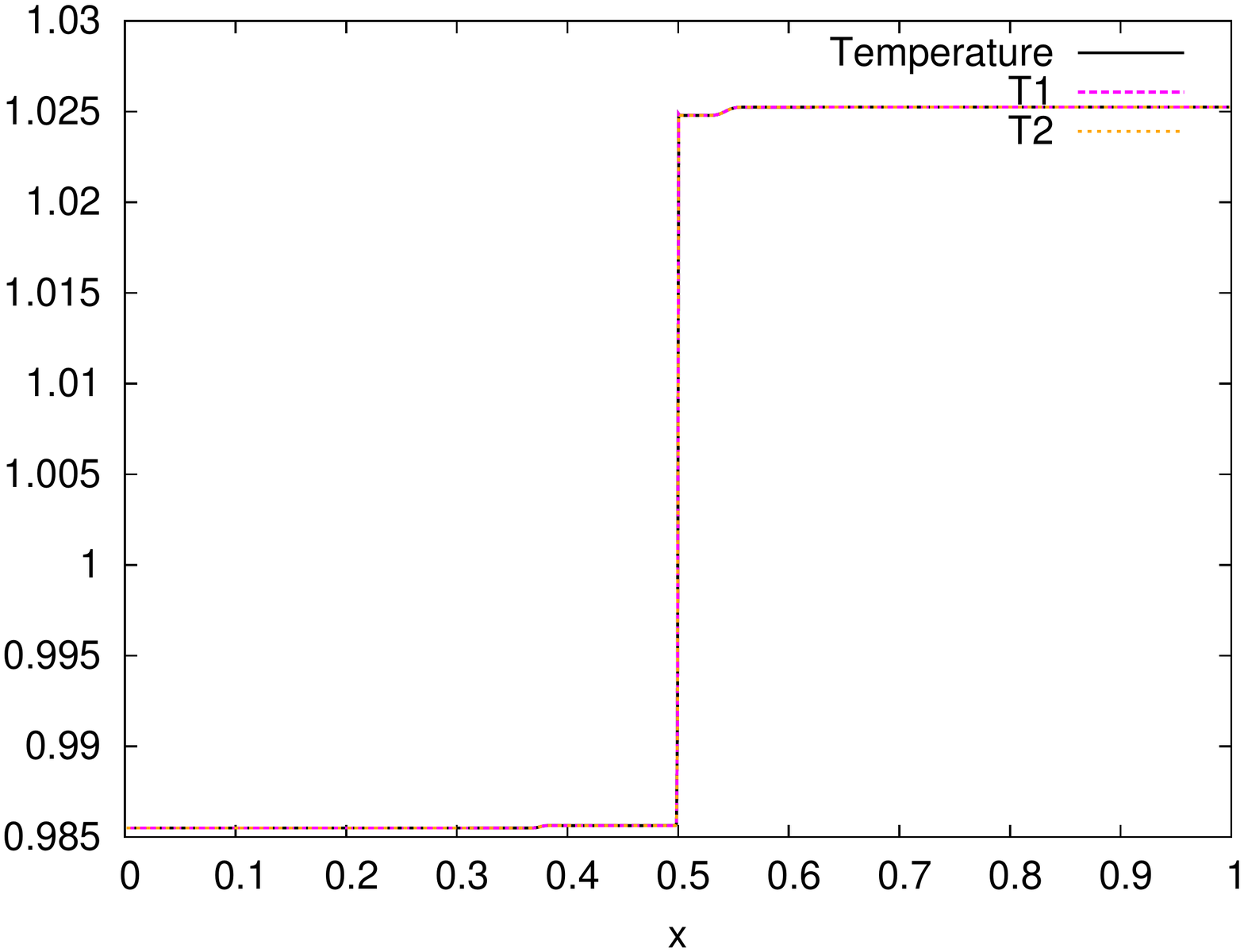}
  \includegraphics[width=0.45\linewidth]{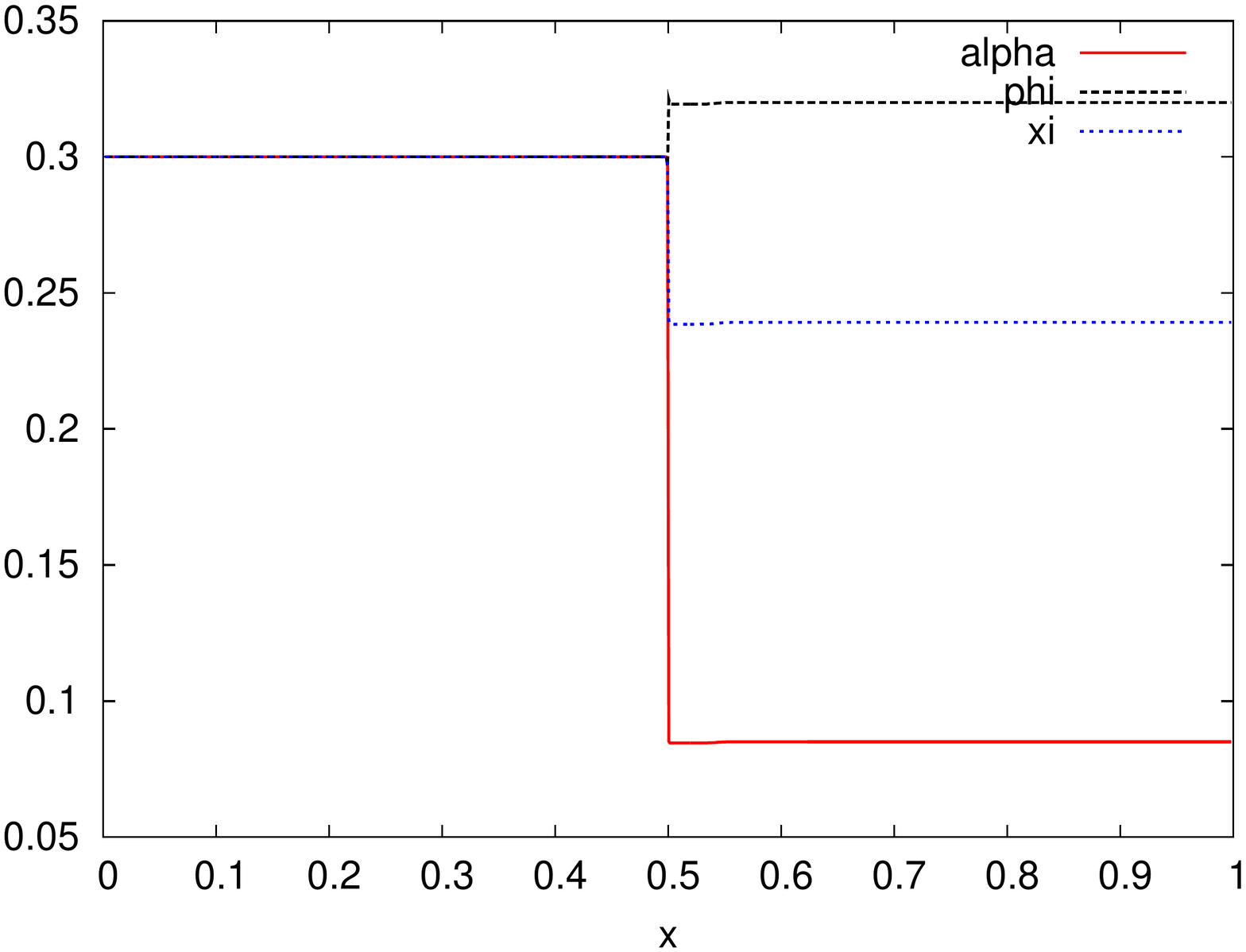}
  \caption{Interaction of a metastable liquid and a saturation state. From top left to bottom right: density
    profile, velocity, pressure, internal energy, temperature and
    fractions profile with respect to the space variable.}
  \label{fig:inter}
\end{figure}

\section{Conclusion}
\label{sec:conclusion}
This paper concerns the construction of appropriate relaxation source
terms towards thermodynamic equilibrium for a liquid-vapor flow with
the possible appearance of metastable states. Extending the works
\cite{james,ghazi19} in the isothermal context, the two
phases are assumed to follow the same  non convex van der Waals equation of state. 
We provide time evolution equations of the fractions of volume, mass and energy of one of
the phases which guarantee the growth of the mixture entropy. 
The dynamical model admits two major properties. First the attractive
equilibria are either saturation states, characterized by the
equalities of the phasic pressures, temperatures and chemical
potential, or stable or metastable states, for which the two phases
identify.
In the latter case, the equilibrium corresponds to the equality of the
fractions to an asymptotic value between 0 and 1 strictly. 
The fluid is either in a liquid
or vapor, metastable or stable, state, but the fractions do not
cancel, as it is classically the case in the Baer-Nunziato type model.
Second, when considering a mixture state belonging to a metastable
zone, there are two possible equilibria depending the initial
condition on the fractions. The system reaches either a saturation
state or converges toward the metastable initial state characterized
by the identification of the two phases. In contrast with standard models, this does not correspond
to volume fractions equal to 0 or 1.
The method we propose here should be extended to more realistic non convex equations of state. 
Using tabulated laws could be a real issue because of the difficulty of determining the attraction basins.
Another issue is the coupling with fluid dynamics, which is merely illustrated here. It deserves a more
careful study, from both theoretical and numerical viewpoints.

\bibliographystyle{plain}
\bibliography{thermo}

\end{document}